\newcommand{\alg}{\mathbf}
\newcommand{\class}{\mathsf}
\newcommand{\cat}{\mathsf}
\newcommand{\functor}{\mathsf}
\newcommand{\abs}[1]{\lvert #1 \rvert}
\newcommand{\pair}[2]{\langle #1, #2 \rangle}
\newcommand{\set}[2]{\{ #1 \mid #2 \}}
\newcommand{\restrict}[2]{#1|_{#2}}
\newcommand{\tuple}[1]{\overline{#1}}
\newcommand{\assign}{\coloneqq}
\newcommand{\iso}{\cong}
\DeclareMathOperator{\dom}{dom}
\renewcommand{\emptyset}{\varnothing}
\newcommand{\into}{\hookrightarrow}
\newcommand{\longto}{\longrightarrow}
\newcommand{\longinto}{\lhook\joinrel\longrightarrow}
\DeclareSymbolFont{forpolishl}{T1}{cmr}{m}{n}
\DeclareMathSymbol{\Luk}{0}{forpolishl}{'212}
\newcommand{\LA}{\alg{L}}
\newcommand{\LS}{\mathbb{L}}
\newcommand{\FS}{\mathbb{F}}
\newcommand{\FA}{\alg{F}}
\newcommand{\X}{\mathbb{X}}
\newcommand{\Y}{\mathbb{Y}}
\newcommand{\Z}{\mathbb{Z}}
\newcommand{\A}{\alg{A}}
\newcommand{\B}{\alg{B}}
\newcommand{\C}{\alg{C}}
\newcommand{\N}{\mathbb{N}}
\newcommand{\Q}{\mathbb{Q}}
\newcommand{\FrameTwo}{\alg{2}}
\newcommand{\Btwo}{\alg{2}_{\cat{BA}}}
\newcommand{\Dtwo}{\alg{2}_{\cat{DL}}}
\newcommand{\MVchain}{[0, 1]}
\newcommand{\PMVchain}{[0, 1]_{+}}
\newcommand{\DiagAlg}{\boldsymbol{\Delta}}
\newcommand{\LeftAlg}{\boldsymbol{\triangleleft}}
\newcommand{\RightAlg}{\boldsymbol{\triangleright}}
\newcommand{\TotAlg}{\boldsymbol{\nabla}}
\newcommand{\HOp}{\mathbb{H}}
\newcommand{\IOp}{\mathbb{I}}
\newcommand{\SOp}{\mathbb{S}}
\newcommand{\POp}{\mathbb{P}}
\newcommand{\PcOp}{\mathbb{P}^{\mathrm{c}}}
\newcommand{\PfrOp}{\mathbb{P}^{\mathrm{fr}}}
\newcommand{\PfinOp}{\mathbb{P}^{\mathrm{fin}}}
\newcommand{\PUOp}{\mathbb{P}_{\mathrm{U}}}
\newcommand{\VOp}{\HOp \SOp \POp}
\DeclareMathOperator{\Con}{Con}
\DeclareMathOperator{\Sg}{Sg}
\DeclareMathOperator{\Sub}{Sub}
\DeclareMathOperator{\Cont}{Cont}
\DeclareMathOperator{\FinRng}{FinRng}
\newcommand{\idmap}{\mathrm{id}}
\newcommand{\op}{\mathrm{op}}
\newcommand{\unit}{\eta}
\DeclareMathOperator{\ev}{ev}
\newcommand{\Alg}{\cat{Alg}_{\LA}}
\newcommand{\Algfv}{\cat{Alg}^{\mathrm{fv}}_{\LA}}
\newcommand{\Set}{\cat{Set}}
\newcommand{\Top}{\cat{Top}}
\newcommand{\LSet}{\cat{Set}_{\LA}}
\newcommand{\LSpa}{\cat{Spa}_{\LA}}
\newcommand{\SepLSpa}{\cat{SepSpa}_{\LA}}
\newcommand{\CRegLSpa}{\cat{CRegSpa}_{\LA}}
\newcommand{\DiscLSpa}{\cat{DiscSpa}_{\LA}}
\newcommand{\CLSpa}{\cat{CSpa}_{\LA}}
\newcommand{\Forget}{\functor{U}}
\newcommand{\UTop}{\Forget_{\cat{Top}}}
\newcommand{\ULSet}{\Forget_{\cat{Set}_{\LA}}}
\newcommand{\Comp}{\mathop{\functor{Comp}}}
\newcommand{\Spec}{\mathop{\functor{Spec}}}
\newcommand{\CComp}{\mathop{\functor{CComp}}}
\newcommand{\Cons}{\mathop{\functor{Cons}}}
\newcommand{\Func}{\mathop{\functor{Func}}}
\newcommand{\Disc}{\mathop{\mathsf{Disc}}}
\newcommand{\Reg}{\mathop{\mathsf{Reg}}}
\newcommand{\Sep}{\mathop{\mathsf{Sep}}}
\newcommand{\Singleton}{\boldsymbol{\ast}}
\newcommand{\equalizer}[2]{\llbracket #1 = #2 \rrbracket}
\newcommand{\maj}{\mathrm{m}}
\newcommand{\I}{\mathcal{I}}
\newcommand{\U}{\mathcal{F}} 
\DeclareMathOperator{\Uf}{Uf}
\DeclareMathOperator{\CS}{CS}
\newcommand{\Sier}{\mathbb{S}}
\def\l@subsection{\@tocline{2}{0pt}{2pc}{6pc}{}} 
\newenvironment{enumerateroman}{\begin{enumerate}[label = (\roman*), ref = \roman*]}{\end{enumerate}}
\newtheorem{theorem}{Theorem}[section]
\newtheorem{lemma}[theorem]{Lemma}
\newtheorem{corollary}[theorem]{Corollary}
\newtheorem{fact}[theorem]{Fact}
\theoremstyle{definition}
\newtheorem{definition}[theorem]{Definition}
\newtheorem{notation}[theorem]{Notation}
\newtheorem{example}[theorem]{Example}
\newtheorem{remark}[theorem]{Remark}
\title{Duality for finitely valued algebras}
\author{Marco Abbadini}
\address{Marco Abbadini,
		School of Computer Science,
		University of Birmingham,
		University Rd W,
		Birmingham B15 2TT,
		UK}
\email{m.abbadini@bham.ac.uk}
\author{Adam P\v{r}enosil}
\address{Adam P\v{r}enosil, Departament de Filosofia, Universitat de Barcelona, C. de Montalegre 6, Barcelona 08001, Spain}
\email{adam.prenosil@gmail.com}
\keywords{Categorical duality, natural dualities, near unanimity, congruence distributivity, positive MV-algebras, MV-algebras}
\subjclass[2020]{Primary: 08C20. Secondary: 08C05, 06E15, 08A05}
\begin{document}

\begin{abstract}
		The theory of natural dualities provides a well-developed framework for studying Stone-like dualities induced by an algebra $\LA$ which acts as a dualizing object when equipped with suitable topological and relational structure. The development of this theory has, however, largely remained restricted to the case where $\LA$ is finite. Motivated by the desire to provide a universal algebraic formulation of the existing duality of Cignoli and Marra for locally weakly finite MV-algebras and to extend it to a corresponding class of positive MV-algebras, in this paper we investigate Stone-like dualities where the algebra $\LA$ is allowed to be infinite.	This requires restricting our attention from the whole prevariety generated by $\LA$ to the subclass of algebras representable as algebras of $\LA$-valued functions of finite range, a distinction that does not arise in the case of finite $\LA$. Provided some requirements on $\LA$ are met, our main result establishes a categorical duality for this class of algebras, which covers the above cases of MV-algebras and positive MV-algebras.
\end{abstract}

\maketitle

\tableofcontents

\section{Introduction}

  At the heart of Stone's celebrated duality~\cite{Stone1936} for Boolean algebras---that is, for the prevariety $\IOp \SOp \POp(\Btwo)$ or, equivalently, the variety $\VOp(\Btwo)$ generated by the two-element Boolean algebra $\Btwo$ with the underlying set $\{ 0, 1 \}$---lies the idea that an abstract Boolean algebra $\A$ may be represented in a concrete way as the algebra $\Cont(X, \{ 0, 1 \})$ of all continuous $\{ 0, 1 \}$-valued functions on a compact (more precisely, Stone) topological space $X$, where the set $\{ 0, 1 \}$ is given the discrete topology, and the Boolean operations of the algebra $\Cont(X, \{ 0, 1 \})$ are computed pointwise. The compact space in question is the so-called spectrum of~$\A$, which is the set of all homomorphisms $\A \to \Btwo$, suitably topologized.

  In the second half of the 20th century, Stone's duality for Boolean algebras inspired a number of further dualities relying on the same pattern. Frequently, these were stated at a universal algebraic level of generality: if a finite algebra $\LA$ satisfies suitable universal algebraic prerequisites, which take the form of assuming the existence of sufficiently many term functions, then a categorical duality obtains between the prevariety $\IOp \SOp \POp(\LA)$ or equivalently the variety $\VOp(\LA)$ generated by $\LA$ and a category of compact topological spaces equipped with some further structure.

  The first result of this kind was Hu's theorem~\cite{Hu1969,Hu1971} stating that in Stone's duality the two-element Boolean algebra $\Btwo$ can be replaced by any primal finite algebra $\LA$ (such as a finite Post algebra), where every finitary operation is a term function. It was soon followed by the seminal duality of Keimel and Werner~\cite{KeimelWerner1974} for any finite algebra $\LA$ that is quasi-primal, i.e.\ such that every finitary operation preserving subalgebras and partial isomorphisms is a term function (such as finite MV-chains), which was in turn generalized by the duality of Davey and Werner~\cite{DaveyWerner1983} for finite $\LA$ with a near unanimity term (such as finite algebras with a lattice reduct).

  These generalizations of Stone's duality add further structure to compact spaces and represent each abstract algebra in $\IOp \SOp \POp(\LA)$ in a concrete way as the algebra of continuous $\LA$-valued functions on some compact space which preserve this structure, again with the discrete topology on $\LA$ and the algebraic operations computed pointwise. Priestley's duality~\cite{Priestley1970} for bounded distributive lattices, which adds order structure on top of the topological structure and restricts to continuous order-preserving $\{ 0, 1 \}$-valued functions, is an example of this. The duality of Davey and Werner is not at all the end of the story---on the contrary, it is one of the foundation stones of the theory of natural dualities~\cite{ClarkDavey1998}, which was extensively developed in the following decades---but for the purposes of this paper we may end our review of existing universal algebraic generalizations of Stone's duality here.

  The reader can easily notice a common thread in the above dualities, namely that the generating object $\LA$ is assumed to be finite. This is not an absolute restriction: some of the work on natural dualities, including~\cite{DaveyWerner1983}, does consider the case of infinite dualizing objects. However, in such cases attention is typically restricted to the setting where $\LA$ is a compact topological algebra, as illustrated by the following quote from an overview paper of Davey~\cite[p.~18]{Davey2015}:
\newlength{\auxlength}
\setlength{\auxlength}{\leftmargini}
\setlength{\leftmargini}{2em}
\smallskip
\begin{quote}
\emph{In the original 1980 Davey–Werner paper~\cite{DaveyWerner1983}, infinite algebras $\underline{\mathbf{M}}$ with a compatible compact topology were allowed. This brings Pontryagin duality for abelian groups under the natural-duality umbrella. As this forces us into the realm of topological algebra and there is a paucity of natural examples, this direction has been little pursued.}
\end{quote}
\smallskip
  Similarly, Davey, Haviar, and Priestley~\cite[p.~247]{DaveyHaviarPriestley2016} have the following to say on the matter:
\smallskip
\begin{quote}
\emph{The authors of~\cite{ClarkDavey1998} took a deliberate decision to restrict their treatment to finitely generated quasivarieties (of algebras). It was already recognised in~\cite{DaveyWerner1983} that finite generation is not a necessary condition for a natural duality to exist but, 30 years on, little general theory has been developed and non-finitely generated examples remain tantalisingly scarce: abelian groups (Pontryagin~\cite{Pontryagin1934}); Ockham algebras~\cite{Goldberg1981,DaveyWerner1985,DaveyWerner1986}; certain semilattice-based algebras~\cite[Section~8]{DaveyJacksonPitkethlyTalukder2007}.}
\end{quote}
\smallskip
\setlength{\leftmargini}{\auxlength}
  The main contribution of this paper is to extend the duality of Davey and Werner to infinite $\LA$ in a direction not pursued in the existing work on natural dualities, namely one that does not ``\emph{force us into the realm of topological algebra}''. Our motivating examples, notably absent from the above list of ``\emph{tantalisingly scarce}'' dualities with infinite $\LA$, will be the dualities of Cignoli, Dubuc, and Mundici~\cite{CignoliDubucMundici2004} and of Cignoli and Marra~\cite{CignoliMarra2012} for certain classes of MV-algebras.

  (To be more precise, we state our main duality result under a strong assumption that Davey and Werner do not impose, namely that there are no homomorphisms between subalgebras of $\LA$ other than inclusions. This assumption is imposed to make the paper more accessible. It can be significantly relaxed, but imposing it has the advantage of tremendously simplifying the presentation of the duality without losing any of our motivating examples. We leave the task of properly formulating the duality without this assumption to future work.)

  The key difference compared to natural dualities in the case of finite $\LA$ is that our duality does not cover the entire class $\IOp \SOp \POp(\LA)$ of what we call \emph{$\LA$-algebras}, but only the subclass $\IOp \SOp \PfrOp(\LA)$ of what we call \emph{finitely valued} $\LA$-algebras, whose definition replaces the class $\POp(\LA)$ of all powers $\LA^{X}$ of $\LA$ by the class $\PfrOp(\LA)$ of their subalgebras consisting of all functions of finite range:
\begin{align*}
  \FinRng(X, \LA) \assign \set{f \in \LA^{X}}{f\colon X \to L \text{ has finite range}} \leq \LA^{X}.
\end{align*}
  This class can equivalently be described as $\Algfv \assign \IOp \SOp \PcOp(\LA)$, where
\begin{align*}
  \PcOp(\LA) \assign \set{\Cont(X, \LA)}{X \text{ compact space}}.
\end{align*}

  In the case covered by our duality results, the class of finitely valued $\LA$-algebras can equivalently be described as $\HOp \SOp \PfrOp(\LA)$, and moreover it contains all locally finite $\LA$-algebras (Fact~\ref{fact: fv = ffv}). Of course, if $\LA$ is finite, then $\PfrOp(\LA) = \POp(\LA)$ and the distinction between arbitrary $\LA$-algebras and finitely valued $\LA$-algebras disappears.

  The universal algebraic theory of natural dualities has largely been developed under the restriction that $\LA$ is finite. However, variants of Stone duality for some specific infinite $\LA$ have already been investigated in the context of MV-algebras. The class of finitely valued $\LA$-algebras for $\LA \assign [0, 1]_{\Q}$ (the rational MV-chain) is precisely the class of locally finite MV-algebras, for which a Stone duality was formulated by Cignoli, Dubuc and Mundici~\cite{CignoliDubucMundici2004}. Building on their work, Cignoli and Marra~\cite{CignoliMarra2012} investigated the case of $\LA \assign [0, 1]$ (the standard MV-chain), obtaining a Stone duality for the strictly larger class of \emph{locally weakly finite finite} MV-algebras. These are precisely the finitely valued MV-algebras in our terminology.\footnote{These classes of MV-algebras are proper subclasses of the class $\IOp \SOp \POp ([0, 1])$ of semisimple MV-algebras, which in our universal algebraic terminology is the class of $\LA$-algebras for $\LA \assign [0, 1]$. The class of semisimple MV-algebras is covered by the duality of Marra and Spada~\cite{MarraSpada2012}, which departs substantially from the Stone-like character of the duality of Cignoli and Marra.} Instead of saying that we are generalizing the duality of Davey and Werner beyond the case of finite $\LA$, we can alternatively say that we are taking the dualities of Cignoli, Dubuc and Mundici and of Cignoli and Marra and stating them at a universal algebraic level of generality.

  A concrete motivating example for pursuing such a generalization is the extension of the duality for finitely valued MV-algebras to finitely valued positive MV-algebras. Namely, consider the reduct $\PMVchain$ of the standard MV-chain $[0, 1]$ in the signature $\{ \oplus, \odot, \vee, \wedge, 0, 1 \}$. This algebra generates the quasivariety $\IOp \SOp \POp \PUOp (\PMVchain)$ of \emph{positive MV-algebras}, introduced in \cite{CabrerJipsenEtAl2019} and further studied in \cite{AbbadiniJipsenEtAl2022,Poiger2024,AbbadiniAglianoEtAl}. Positive MV-algebras are precisely the reducts of MV-algebras in the above signature. They can be thought of as a common generalization of the varieties of MV-algebras and bounded distributive lattices, even though positive MV-algebras themselves do not form a variety. Our universal algebraic duality specializes to a new duality for finitely valued positive MV-algebras, which is in effect the common generalization of the Stone duality of Cignoli and Marra for finitely valued MV-algebras and Priestley duality for bounded distributive lattices.

  Now that we have described the algebraic side of our duality and offered finitely valued positive MV-algebras as a concrete motivating case, let us explain what the spatial side of the duality looks like. Our duality theorem in fact consists of two pieces: a categorical dual equivalence between finitely valued $\LA$-algebras and a class of structured spaces called $\LA$-spaces, and a categorical isomorphism between $\LA$-spaces and more concrete structures called $\LA$-constrained spaces. The former class of spaces is less convenient when trying to get hold of a particular example but easier to work with in general arguments. The latter class of spaces is, in contrast, more tangible in concrete cases but less elegant to work with in general argument.

  An \emph{$\LA$-space} consists of a topological space $X$ equipped directly with an algebra $\Comp \X \leq \Cont(X, \LA)$ of continuous $\LA$-valued functions called the \emph{compatible functions} of $\X$.
We may think of $\Comp \X$ as an ``$\LA$-topological'' structure on top of the ordinary topological structure of $X$. Indeed, the definition of $\LA$-spaces is very reminiscent of the definition of topological spaces, provided that we phrase it using the two-element frame $\FrameTwo \assign \langle \{ 0, 1 \}, \wedge, \bigvee, 0, 1 \rangle$. Namely, if we identify open sets with their characteristic maps into the two-element Sierpi\'{n}ski space $\Sier$ (the set $\{ 0, 1 \}$ with the opens $\emptyset$, $\{ 1 \}$, $\{ 0, 1 \}$), a topological space is precisely the same thing as a set $X$ equipped with a designated subalgebra of $\FrameTwo^{X}$ specifying which functions $f\colon X \to \FrameTwo$ count as continuous.

  $\LA$-spaces are thus sets equipped with a $\FrameTwo$-topological structure plus a compatible $\LA$-topological structure.
 Accordingly, their morphisms $\phi\colon \X \to \Y$ are maps of sets $\phi\colon X \to Y$ which are continuous with respect to both of these structures in precisely the same sense. Namely, for each $f\colon Y \to \FrameTwo$ and each $g\colon Y \to \LA$
\begin{align*}
  f\colon Y \to \FrameTwo \text{ is continuous} & \implies f \circ \phi\colon X \to \FrameTwo \text{ is continuous,} \\
  g\colon Y \to \LA \text{ is compatible} & \implies g \circ \phi\colon X \to \LA \text{ is compatible.}
\end{align*}
  Natural analogues of other topological notions can also be formulated for $\LA$-spaces. The counterpart of the $T_{0}$ separation axiom is the property of being \emph{separated}: for each pair of distinct points $x \neq y$, there is a compatible function $f$ with $f(x) \neq f(y)$.

  Our first main result (Theorem~\ref{thm: cd duality}) is the Congruence Distributive Duality Theorem, or the CD Duality Theorem for short, which states that, under suitable assumptions on $\LA$, a dual equivalence obtains between finitely valued $\LA$-algebras and compact separated $\LA$-spaces. The main hypothesis of this result is that each finitely valued $\LA$-algebra is relatively congruence distributive with respect to some prevariety containing $\LA$. This occurs for example when $\VOp (\LA)$ is congruence distributive, or when $\IOp \SOp \POp (\LA)$ or $\IOp \SOp \POp \PUOp(\LA)$ are relatively congruence distributive.

  The duality falls under the umbrella of concrete dualities formulated by Porst and Tholen~\cite{PorstTholen1991}. In other words, in the proof of the CD Duality Theorem we equip the underlying set $L$ with the structure of a topologically discrete $\LA$-space $\LS$ and then employ the machinery of concrete dualities. A noteworthy difference compared to many other dualities of this type is that $\LS$ is in general not an object within the scope of the CD Duality Theorem, since the discrete space $\LS$ is not topologically compact unless the algebra $\LA$ is finite.

  Our second main result (Theorem~\ref{thm: bp representation}) is the Baker--Pixley Representation Theorem, which shows that the spatial side of this duality can be given a more tangible representation. It states that if $\LA$ has a near unanimity term of arity $k+1$, then the category of compact separated $\LA$-spaces is isomorphic to the category of $k$-ary $\LA$-Priestley spaces, that is, compact separated $k$-ary $\LA$-constrained spaces with the global extension property. We now explain what these terms mean.

  A \emph{$k$-ary $\LA$-constrained space} $\X$ consists of a topological space $X$ equipped with some relational structure, namely a suitable family of \emph{constraints} $\alg{A}_{I} \leq \Cont(I, \LA)$ indexed by sets $I \subseteq X$ of cardinality at most $k$ (which we write more compactly as $I \subseteq_{k} X$). A \emph{continuous compatible function} on (a subset of) a $k$-ary $\LA$-constrained space $\X$ is then a continuous $\LA$-valued function $f$ such that its restriction to each $I \subseteq_{k} X$ is compatible, in the sense that $\restrict{f}{I} \in \A_{I}$. (In the context of $\LA$-spaces, compatible functions were continuous by definition, but now we need to explicitly restrict to continuous functions.) We call an $\LA$-constrained space \emph{separated} if for each $x \neq y$ there is some function $f \in \A_{\{ x, y \}}$ with $f(x) \neq f(y)$. 

  In a $k$-ary $\LA$-constrained space, some local functions $f \in \Cont(I, \LA)$ for $I \subseteq_{k} X$ may not extend to any global compatible function $g \in \Cont(X, \LA)$. The \emph{global extension property} states that these extensions always exist: for each $I \subseteq_{k} X$ and $f\colon I \to \LA$,
\begin{align*}
  f \in \A_{I} \implies f = \restrict{g}{I} \text{ for some compatible } g \in \Cont(X, \LA).
\end{align*}
  In case $\LA$ is the two-element bounded distributive lattice $\Dtwo$, the global extension property is precisely the Priestley separation axiom, up to the correspondence between clopen upsets and order-preserving functions into the topologically discrete two-element poset $0 < 1$ (cf.~Example~\ref{example: priestley}).

  Composing the CD Duality Theorem with the Baker--Pixley Representation Theorem then yields the Near Unanimity Duality Theorem (Theorem~\ref{thm: nu duality}), or the NU Duality Theorem for short, which subsumes a number of familiar dualities including Priestley duality for bounded distributive lattices.\footnote{It is worth recalling here that a finite algebra $\LA$ which generates a congruence distributive variety admits, in a precise sense, a natural duality if and only if $\LA$ has a near unanimity term~\cite{DaveyHeindorfMcKenzie1995}. In other words, if one's notion of a natural duality is restricted to the setting of $\LA$-constrained spaces and finite $\LA$, there is no gap between the CD and NU conditions.}

  The global extension property is, of course, a brute-force local-to-global principle. Whenever possible, it is desirable to replace it with a more local condition. We do so in two cases: the case of unary constraints and the case of finite spaces.

  The first case is where the family of constraints in an $\LA$-constrained space can be recovered from a family of unary constraints. This happens when $\LA$ has a majority term and moreover each subalgebra of $\LA \times \LA$ is either a product subalgebra ($\alg{C}_{1} \times \alg{C}_{2}$ for some $\alg{C}_{1}, \alg{C}_{2} \leq \LA$) or a subalgebra of the diagonal. In this case, a compact separated $\LA$-constrained space has the global extension property if and only if it is topologically a Stone space. This yields a version of the NU Duality Theorem (Theorem~\ref{thm: nu duality unary case}) where the spatial side features Stone spaces with a subalgebra of $\LA$ associated to each point in a continuous way, thus subsuming the duality of Cignoli, Dubuc and Mundici~\cite{CignoliDubucMundici2004} for locally finite MV-algebras and the duality of Cignoli and Marra~\cite{CignoliMarra2012} for finitely valued MV-algebras.

  The second case is the case of finite $\LA$-constrained spaces, which enables us to replace the global extension property by its local version. We say that an $\LA$-constrained space has the \emph{$n$-ary local extension property} if, for each $I \subseteq_{n} X$, each $j \in X$, and each compatible function $f \in \Cont(I, \LA)$, there is some compatible $g \in \Cont(I \cup \{ j \}, \LA)$ such that $f = \restrict{g}{I}$. In the case of $\LA \assign \Dtwo$, the binary local extension property corresponds precisely to the transitivity of the order relation (cf.~Example~\ref{example: priestley local extension}). Given a near unanimity term on $\LA$ of arity $k+1$, we show that a finite $k$-ary $\LA$-constrained space has the global extension property if and only if it has the $n$-ary local extension property for $n \assign k(k - 1)$. This yields a version of the NU Duality Theorem~\ref{thm: nu duality finite case} which subsumes Birkhoff duality for finite bounded distributive lattices (the finite restriction of Priestley duality).

  We claim that one of the contributions of the paper is to set up a \emph{division of labor} between the CD Duality Theorem and the Baker--Pixley Representation Theorem, where one works directly with $\LA$-spaces as much as possible, only reaching for $\LA$-constrained spaces when necessary. We claim that one gains a good deal of clarity from consistently adhering to this division of labor. Working with $\LA$-spaces allows one to entirely bypass the bureaucracy related to keeping track of families of constraints, which is particularly helpful when proving general theorems at the universal algebraic level, without having a fixed $\LA$ in mind. Moreover, the analogy with ordinary topology naturally guides one's attention to notions like ``complete $\LA$-regularity'', which are more difficult to discern by the naked eye from the perspective of $\LA$-constrained spaces. On the other hand, $\LA$-spaces do not themselves provide very tangible representations of particular $\LA$-algebras for a given choice of $\LA$. That task is handled by $\LA$-constrained spaces. 

  In order to make the paper more accessible, its main results are proved under the assumption that there are no homomorphisms between subalgebras of $\LA$ besides inclusion maps. This allows us to simplify the presentation substantially while still covering the motivating cases of MV-algebras and positive MV-algebras. On the other hand, this restriction excludes for instance the case of De Morgan algebras. We shall relax this simplifying assumption in future work. For the time being, let us note that the Near Unanimity Duality Theorem familiar from the theory of natural dualities~\cite[Theorem~3.4]{ClarkDavey1998} does not feature any such restriction.

  We have also chosen to leave the treatment of concrete applications of our duality results to future work, so as not to unduly extend the length of this paper. These applications include a dual description of injective hulls in the category of finitely valued $\LA$-algebras and a dual description of free MV-extensions of positive MV-algebras (see~\cite{AbbadiniJipsenEtAl2022}).

  The outline of the paper is the following. We introduces $\LA$-algebras and $\LA$-spaces in Section~\ref{sec: l-algebras and l-spaces}, including completely $\LA$-regular, separated, and full $\LA$-spaces. In Section~\ref{sec: duality for l-algebras} we set up a dual adjunction between $\LA$-algebras and $\LA$-spaces which uses $\LA$ as a dualizing object. This specializes to a dual equivalence between $\LA$-algebras and completely $\LA$-regular full separated $\LA$-spaces (Theorem~\ref{thm: duality}). This duality has, on its own, little use. The goal of the next two sections is to restrict it to a useful duality. In Section~\ref{sec: finitely valued} we introduce the classes of finitely valued and canonically finitely valued $\LA$-algebras and obtain a dual equivalence between canonically finitely valued $\LA$-algebras and compact full separated $\LA$-spaces (Theorem~\ref{thm: duality canonically fv}). In Section~\ref{sec: jonsson} we impose further restrictions on $\LA$, assuming in particular that finitely valued $\LA$-algebras are relatively congruence distributive with respect to some prevariety containing $\LA$, and obtain a dual equivalence between finitely valued $\LA$-algebras and compact separated $\LA$-spaces (Theorem~\ref{thm: cd duality}). As a corollary, we obtain a representation of relative congruences of finitely valued $\LA$-algebras (Theorem~\ref{thm: representation of congruences}). Finally, in Section~\ref{sec: nu}, we show that compact separated $\LA$-spaces may be represented in a more concrete way in terms of what we call $\LA$-constrained spaces, under the assumption that $\LA$ has a near unanimity term. This allows us to formulate our duality results in a way which directly specializes to a number of existing variants of Stone duality (Theorems~\ref{thm: nu duality}, \ref{thm: nu duality unary case}, and~\ref{thm: nu duality finite case}).

\section{\texorpdfstring{$\LA$-algebras and $\LA$-spaces}{L-algebras and L-spaces}}\label{sec: l-algebras and l-spaces}

  In this section, we introduce $\LA$-algebras and $\LA$-spaces. These form the ambient classes of objects inside which we shall try to find well-behaved classes admitting a Stone-like duality.

  Throughout the paper, we fix an algebra $\LA$ and restrict to algebras in the signature of $\LA$. Whenever topology is involved, we take $\LA$ to be \emph{topologically discrete}. For example, even in the special case of MV-algebras, the algebra $\LA \assign [0,1]$ will be equipped with the discrete topology rather than the Euclidean topology.

\begin{notation}
  Given a set $X$, projection maps from $\LA^{X}$ onto some component $x \in X$ or onto some set of components $I \subseteq X$ will be denoted as follows:
\begin{align*}
  & \pi_{x}\colon \LA^{X} \to \LA, & & \pi_{I}\colon \LA^{X} \to \LA^{I}.
\end{align*}
  We also write $\restrict{f}{J} \assign \pi_{J}(f)$ for the restriction of a function $f\colon I \to \LA$ to a set $J \subseteq I$. For products of the form $X_{1} \times \dots \times X_{n}$, the projection maps will be written as $\pi_{i}\colon X_{1} \times \dots \times X_{n} \to X_{i}$ for $i \in \{ 1, \dots, n \}$. We write $f_{x} \assign f(x)$ for the value of a function $f\colon X \to \LA$ at some point $x \in X$.
\end{notation}

  \begin{notation}
  	For an algebra $\A$ and a subset $S \subseteq \A$, we let $\Sg^\A(S)$ denote the subalgebra of $\A$ generated by $S$.
  	We write $\Sg^\A(a)$ for $\Sg^\A(\{a\})$.
  \end{notation}

  The category of sets and functions will be denoted by $\Set$, and the category of topological spaces and continuous maps by $\Top$.

\subsection{\texorpdfstring{$\LA$-algebras and $\LA$-spaces}{L-algebras and L-spaces}}

\begin{definition}
  An \emph{$\LA$-algebra} is an algebra isomorphic to an algebra of the form $\A \leq \LA^{X}$ for some set $X$. Equivalently, the class of $\LA$-algebras is $\Alg \assign \IOp \SOp \POp (\LA)$, where the class operators $\IOp$, $\SOp$, $\POp$ denote the closure of a class of algebras under isomorphic images, subalgebras and products.
\end{definition}

  If $\LA$ has no constants in its signature, we allow for the empty $\LA$-algebra. The product of the empty family of $\LA$-algebras is a singleton algebra. Accordingly, every singleton algebra is an $\LA$-algebra.

\begin{remark}
	The class $\Alg$ is a \emph{prevariety}, i.e.\ it is closed under the class operators $\IOp$, $\SOp$ and $\POp$. It is the prevariety \emph{generated} by $\LA$, i.e.\ the smallest prevariety containing $\LA$. If $\LA$ is finite, then every $\LA$-algebra is locally finite and $\Alg$ is in fact a \emph{quasivariety}, i.e.\ it is in addition closed under $\PUOp$ (ultraproducts).
\end{remark}

\begin{remark}
  In many cases of interest, the class of all $\LA$-algebras is in fact a variety, i.e.\ $\Alg = \VOp(\LA)$, where $\HOp$ denotes the closure under homomorphic images. This happens in particular if $\LA$ is a finite algebra that generates a congruence distributive variety (for example, due to having a lattice reduct) and every non-empty nontrivial subalgebra of $\LA$ is simple (this is a consequence of J\'onsson's lemma \cite[Thm.~IV.6.8]{BurrisSankappanavar1981}).
\end{remark}

  We shall treat $\Alg$ (or indeed any class of algebras) as a category where the morphisms are homomorphisms of algebras.

\begin{notation}
  We shall use the notation:
\begin{enumerateroman}
\item $\Btwo$ for the two-element Boolean algebra $0 < 1$,
\item $\Dtwo$ for the two-element bounded distributive lattice $0 < 1$,
\item $\MVchain$ for the standard MV-chain,
\item $\PMVchain$ for the standard positive MV-chain.
\end{enumerateroman}
\end{notation}

  The standard positive MV-chain is the reduct of the MV-chain $\MVchain$ in the signature $\{ \oplus, \odot, \vee, \wedge, 0, 1 \}$. \emph{Positive MV-algebras}, introduced in \cite{CabrerJipsenEtAl2019} and further studied in \cite{AbbadiniJipsenEtAl2022,Poiger2024,AbbadiniAglianoEtAl}, are the quasivariety generated by $\PMVchain$. The prevariety generated by $\PMVchain$ is the subclass of semisimple positive MV-algebras~\cite[Section~4]{AbbadiniAglianoEtAl}.

\begin{example}
  These algebras yield the following classes of $\LA$-algebras:
\begin{enumerateroman}
\item the variety of Boolean algebras for $\LA \assign \Btwo$,
\item the variety of bounded distributive lattices for $\LA \assign \Dtwo$,
\item the class of semisimple MV-algebras for $\LA \assign \MVchain$,
\item the class of semisimple positive MV-algebras for $\LA \assign \PMVchain$.
\end{enumerateroman}
\end{example}

  $\LA$-algebras were defined as the algebras of some $\LA$-valued functions on some set $X$ (under the pointwise operations), up to isomorphism. We can equivalently define them as the algebras of some continuous $\LA$-valued functions on some space.

\begin{definition}
  The continuous $\LA$-valued functions on a topological space $X$ (with $\LA$ topologically discrete) form an algebra
\begin{align*}
  \Cont(X, \LA) \assign \set{f \in \LA^{X}}{f\colon X \to \LA \text{ is continuous}} \leq \LA^{X}.
\end{align*}
This is true because, for every $n \in \N$, any function $\LA^n \to \LA$ is continuous, since $\LA$ is discrete (and so the interpretation of every function symbol is continuous, and so maps a tuple of continuous functions to a continuous function).
\end{definition}

\begin{definition}
  An \emph{$\LA$-representation} of an $\LA$-algebra $\A$ (on a set $X$) is an embedding ${\rho\colon \A \into \LA^{X}}$. A \emph{continuous $\LA$-representation} of $\A$ (on a topological space $X$) is an embedding $\rho\colon \A \into \Cont(X, \LA)$.
\end{definition}

\begin{remark}
  Only singleton $\LA$-algebras and the empty $\LA$-algebra (if it exists) have an $\LA$-representation over $X \assign \emptyset$. The $\LA$-algebras with an $\LA$-representation over a singleton set $X$ are up to isomorphism precisely the subalgebras of $\LA$.
\end{remark}

  The main theme of the present paper is the study of $\LA$-algebras through their representations. However, instead of working with $\LA$-representations as embeddings of an $\LA$-algebra into $\LA^{X}$ or $\Cont(X, \LA)$, it will generally be more convenient to work directly with a set or a space $X$ equipped with a subalgebra of $\LA^{X}$ or $\Cont(X, \LA)$.

\begin{definition}
\label{d:L-space}
  An \emph{$\LA$-set} is a pair $\X \assign \langle X, \A \rangle$ consisting of
\begin{enumerateroman}
\item a set $X$ (the \emph{underlying set} of $\X$),
\item an algebra $\A \leq \LA^{X}$ (the algebra of \emph{compatible functions} on $\X$).
\end{enumerateroman}
  Similarly, an \emph{$\LA$-space} is a pair $\X \assign \langle X, \A \rangle$ consisting of
\begin{enumerateroman}
\item a topological space $X$ (the \emph{underlying space} of $\X$),
\item an algebra $\A \leq \Cont(X, \LA)$ (the algebra of \emph{compatible functions} on $\X$).
\end{enumerateroman}
\end{definition}

\begin{notation}
  Given an $\LA$-set or an $\LA$-space $\X \assign \langle X, \A \rangle$, we use the notation $\Comp \X \assign \A$. We shall also simply write $X$, $Y$ for the underlying sets or spaces of the $\LA$-sets or $\LA$-spaces $\X$, $\Y$.
\end{notation}

\begin{definition}
  Given $\LA$-sets $\X$ and $\Y$, an \emph{$\LA$-map} ${\phi\colon \X \to \Y}$ is a map of sets ${\phi\colon X \to Y}$ which \emph{reflects compatibility}:
\begin{align*}
  g \in \Comp \Y \implies g \circ \phi \in \Comp \X.
\end{align*}
\end{definition}

  The category of $\LA$-sets and $\LA$-maps will be denoted by $\LSet$, and the category of $\LA$-spaces and continuous $\LA$-maps by $\LSpa$.

  The assignment $\X \mapsto \Comp \X$ extends to a contravariant functor, namely the \emph{compatible algebra functor} $\Comp\colon \LSet^{\op} \to \Alg$ ($\Comp\colon \LSpa^{\op} \to \Alg$), which takes a (continuous) $\LA$-map $\phi\colon \X \to \Y$ to the homomorphism
\begin{align*}
  \Comp \phi\colon \Comp \Y & \longto \Comp \X \\
  g & \longmapsto g \circ \phi.
\end{align*}
  The definition of an $\LA$-map is devised precisely so that $\Comp \phi$ is well-defined.

\begin{remark}
  The definitions of $\LA$-sets and $\LA$-maps are entirely analogous to the definitions of topological spaces and continuous maps, with the algebra $\LA$ taking the place of the two-element frame $\FrameTwo$ (the two-element chain $0 < 1$ with finite meets and arbitrary joins as primitive operations). A topological space is precisely a set $X$ equipped with an algebra $\A \leq \FrameTwo^{X}$, and given topological spaces represented as sets $X$ and $Y$ equipped with algebras $\A \leq \FrameTwo^{X}$ and $\B \leq \FrameTwo^{Y}$, a continuous map $\phi\colon X \to Y$ is precisely a map of sets such that $g \in \B$ implies $g \circ \phi \in \A$.
\end{remark}

  In other words, $\LA$-sets are sets with an $\LA$-topological structure instead of an ordinary topological structure, while $\LA$-spaces have both an $\LA$-topological structure and an ordinary topological structure. As we shall now see, in some cases the topological structure can be recovered from the $\LA$-topological structure.

\subsection{\texorpdfstring{Completely $\LA$-regular $\LA$-spaces}{Completely L-regular L-spaces}}

  The category of $\LA$-spaces comes with three natural forgetful functors: one forgets about the topology, one about the algebra of compatible functions, and one about both. These are, respectively, the underlying $\LA$-set functor, the underlying space functor, and the underlying set functor:
\begin{align*}
  & \ULSet\colon \LSpa \to \LSet, & & \UTop\colon \LSpa \to \Top, & & \abs{\cdot}\colon \LSpa \to \Set.
\end{align*}
  We first consider the underlying $\LA$-set functor $\ULSet$, which assigns to each $\LA$-space its $\LA$-set reduct. It has a left and a right adjoint, corresponding to the two natural ways in which an $\LA$-set $\X$ can be given a topological structure.

  The first option is to take $X$ to be a discrete topological space. The second option is to take the initial topology on $X$ with respect to $\Comp \X$, i.e.\ the topology generated by the sets $f^{-1}[\{ a \}]$ for $f \in \Comp \X$ and $a \in \A$. This yields the \emph{discretization} and the \emph{regularization} functors, respectively:
\begin{align*}
  & \Disc\colon \LSet \to \LSpa, & & \Reg\colon \LSet \to \LSpa.
\end{align*}
  Their action on maps is $\Disc \phi \assign \phi$ and $\Reg \phi \assign \phi$. ($\Reg \phi$ is continuous for each $\LA$-map $\phi\colon \X \to \Y$ because $\phi^{-1}[f^{-1} [\{ a \}]] = (f \circ \phi)^{-1} [\{ a \}]$ for $f \in \Comp \Y$ and~$a \in \LA$.)

\begin{definition}
  An $\LA$-space $\X$ is \emph{completely $\LA$-regular} if its topology is initial with respect to $\Comp \X$, or equivalently if the sets $f^{-1}[\{ a \}]$ for $f \in \Comp \X$ and $a \in \A$ form a basis for the underlying space of $X$.
\end{definition}

  That is, $\X$ is completely $\LA$-regular if $\X = \Reg \ULSet \X$. The topological structure of a completely $\LA$-regular $\LA$-space can thus be recovered from its $\LA$-set structure. The category of completely $\LA$-regular $\LA$-spaces (as a full subcategory of $\LSpa$) will be denoted by $\CRegLSpa$, and the category of discrete $\LA$-spaces by $\DiscLSpa$.

\begin{fact} \label{fact: creg adjunction}
  $\Disc$ is left adjoint to $\ULSet$, and $\Reg$ is right adjoint to $\ULSet$, the unit and counit in both cases being the identity maps. Consequently, $\ULSet$ and $\Disc$ form an equivalence between $\LSet$ and $\DiscLSpa$, and $\ULSet$ and $\Reg$ form an equivalence between $\LSet$ and $\CRegLSpa$.
\end{fact}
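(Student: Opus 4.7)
The plan is to verify two adjunctions and then deduce the two equivalences by restricting to the appropriate full subcategories. Both adjunctions are ``identity-on-underlying-sets'' adjunctions, so the entire content reduces to checking that certain identity maps are morphisms in the relevant categories, after which the triangle identities hold trivially.

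For $\Disc \dashv \ULSet$, I will produce the natural bijection $\LSpa(\Disc \X, \Y) \cong \LSet(\X, \ULSet \Y)$ by sending a (continuous) $\LA$-map to ``the same underlying function''. This is well defined because $\Disc \X$ and $\X$ have the same underlying $\LA$-set, and $\ULSet \Y$ and $\Y$ have the same underlying $\LA$-set; it is a bijection because any $\LA$-map out of $\Disc \X$ is automatically continuous (the domain carries the discrete topology). The unit $\X \to \ULSet(\Disc \X)$ is literally the identity $\LA$-map since $\ULSet \circ \Disc = \idmap_{\LSet}$ on the nose, and the counit $\Disc(\ULSet \Y) \to \Y$ is the identity function, which is continuous because the source carries the discrete topology.

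For $\ULSet \dashv \Reg$, the claimed bijection $\LSet(\ULSet \X, \Y) \cong \LSpa(\X, \Reg \Y)$ is again realized by the identity correspondence. The only nontrivial check is that an $\LA$-map $\phi\colon \ULSet \X \to \Y$ is automatically continuous as a map $X \to \Reg \Y$. By definition of the initial topology defining $\Reg \Y$, this reduces to showing that $g \circ \phi\colon X \to \LA$ is continuous for every $g \in \Comp \Y$; but $\phi$ is an $\LA$-map, so $g \circ \phi \in \Comp \X \leq \Cont(X, \LA)$, and we are done. Conversely, any continuous $\LA$-map $\X \to \Reg \Y$ is obviously an $\LA$-map $\ULSet \X \to \Y$. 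The counit $\ULSet(\Reg \Y) \to \Y$ is the identity because $\ULSet \circ \Reg = \idmap_{\LSet}$, and the unit $\X \to \Reg(\ULSet \X)$ is the identity function, which is continuous because the original topology on $\X$ is at least as fine as the initial topology generated by $\Comp \X$ (since compatible functions on $\X$ are by definition continuous).

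The equivalences now follow at once. The functor $\Disc$ factors through the full subcategory $\DiscLSpa$, and on $\DiscLSpa$ we have $\Disc \circ \ULSet = \idmap$ because reapplying the discrete topology to an already discrete space changes nothing, while $\ULSet \circ \Disc = \idmap_{\LSet}$ holds on all of $\LSet$. Similarly, $\Reg$ factors through $\CRegLSpa$, and on $\CRegLSpa$ the equality $\Reg \circ \ULSet = \idmap$ is exactly the definition of complete $\LA$-regularity, while $\ULSet \circ \Reg = \idmap_{\LSet}$ again on all of $\LSet$. There is no substantial obstacle in this proof; the only point requiring care is the continuity verification for the unit of $\ULSet \dashv \Reg$, where one must invoke the inclusion $\Comp \X \leq \Cont(X, \LA)$.
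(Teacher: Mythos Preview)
Your proof is correct and follows essentially the same approach as the paper's: both establish the adjunctions via the hom-set bijections, with the key observations being that every map out of a discrete space is continuous (for $\Disc \dashv \ULSet$) and that every $\LA$-map into a completely $\LA$-regular space is automatically continuous (for $\ULSet \dashv \Reg$). You give more detail than the paper, in particular spelling out why $g \circ \phi \in \Comp \X \leq \Cont(X, \LA)$ yields continuity into the initial topology and why the unit $\X \to \Reg(\ULSet \X)$ is continuous, but the underlying argument is the same.
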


\begin{proof}
  We have an isomorphism $\LSet(\ULSet \X, \Y) \iso \LSpa(\X,  \Reg \Y)$ (natural in $\X$ and $\Y$) because each $\LA$-map from an $\LA$-space into a completely regular $\LA$-space is continuous. Similarly, we have an isomorphism $\LSet(\X, \ULSet \Y) \iso \LSpa(\Disc \X, \Y)$ (natural in $\X$ and $\Y$) because each $\LA$-map from a discrete $\LA$-space into an $\LA$-space is continuous.
\end{proof}

\subsection{\texorpdfstring{Separated and full $\LA$-spaces}{Separated and full L-spaces}}

  Before turning our attention to the underlying space functor $\UTop$, let us first introduce the $\LA$-space analogue of the $T_{0}$ separation axiom. Recall that a topological space is $T_0$ if each pair of distinct points $x \neq y$ is \emph{topologically distinguishable}: there is some open set $U$ such that either $x \in U$ and $y \notin U$ or $x \notin U$ and $y \in U$.

\begin{definition}
  The $\LA$-set or $\LA$-space $\X$ is \emph{separated} if for each $x \neq y$ in $X$ there is a compatible function $f \in \Comp \X$ such that $f_{x} \neq f_{y}$.
\end{definition}

  Equivalently, separation states that for each homomorphism $h\colon \Comp \X \to \LA$ there is at most one point $x \in X$ such that $h = \pi_{x}$. The property dual to separation will be called fullness.

\begin{definition}
  An $\LA$-space $\X$ is \emph{full} if for each homomorphism $h\colon \Comp \X \to \LA$ there is $x \in X$ such that $h(f) = f_{x}$ for all $f \in \Comp \X$.
\end{definition}

  The category of separated $\LA$-spaces, as a full subcategory of $\LSpa$, will be denoted by $\SepLSpa$. In a separated $\LA$-space $\X$ the discrete topology of $\LA$ forces certain topological properties to hold in $\X$.

\begin{definition}
  A topological space $X$ is called \emph{zero-dimensional} if it has a basis (or equivalently a subbasis) of clopen sets. It is a \emph{Stone space} if it is a compact Hausdorff zero-dimensional space, or, equivalently, a compact space in which clopens separate points.
\end{definition}

\begin{lemma} \label{lemma: properties of separated l-spaces} \label{l:separating-then}
~
	\begin{enumerateroman}
		\item \label{i:sep-is-Haus} 
		In each separated $\LA$-space, clopens separate points.
		\item \label{i:sep-of-compact-is-Stone}
		Each compact separated $\LA$-space is Stone.
		\item \label{i:sep-of-finite-is-discrete}
		Each finite separated $\LA$-space is discrete.
	\end{enumerateroman}
\end{lemma}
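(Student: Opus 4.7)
The plan is to observe that part (i) is the only substantive point and that (ii) and (iii) then follow from it by standard topology of Stone spaces.

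For (i), I would argue directly from the definitions. Given distinct points $x \neq y$ in a separated $\LA$-space $\X$, separation supplies a compatible function $f \in \Comp \X$ with $f_{x} \neq f_{y}$. Because $\LA$ carries the discrete topology, the singleton $\{ f_{x} \} \subseteq L$ is clopen, and since $\Comp \X \leq \Cont(X, \LA)$, continuity of $f$ makes $f^{-1}[\{ f_{x} \}]$ a clopen subset of $X$ that contains $x$ but not $y$.

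For (ii), I would invoke the equivalent characterization of Stone spaces given in the definition above: a compact space is Stone if and only if its clopens separate points. Part (i) gives the second condition and compactness is hypothesized. Should a self-contained argument be preferred, Hausdorffness is immediate from any separating clopen and its complement, and for zero-dimensionality one covers the compact closed set $X \setminus U$ (for $U$ an open neighbourhood of $x$) by the opens $X \setminus C_{y}$, where $C_{y}$ separates $x$ from $y \in X \setminus U$, then extracts a finite subcover $y_{1}, \ldots, y_{n}$ and takes $C_{y_{1}} \cap \cdots \cap C_{y_{n}}$ as the required clopen neighbourhood of $x$ inside $U$.

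For (iii), I would note that in a finite space whose clopens separate points every singleton is clopen: for each $x \in X$, a choice of clopens $C_{y} \ni x$ with $y \notin C_{y}$ for each $y \neq x$ gives $\{ x \} = \bigcap_{y \neq x} C_{y}$, a finite intersection of clopens. Combined with (i), this yields discreteness. I do not foresee any real obstacle here; the entire lemma boils down to the single $\LA$-theoretic input that preimages of singletons under compatible functions are clopen, after which everything is a routine application of facts about Stone spaces.
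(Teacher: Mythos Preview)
Your proposal is correct and follows essentially the same approach as the paper: both prove (i) by taking the clopen $f^{-1}[\{f_x\}]$ for a separating compatible $f$, and both derive (ii) and (iii) as immediate consequences (the paper simply writes ``the other claims are immediate consequences,'' while you spell out the standard Stone-space arguments).
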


\begin{proof}
	The first claim holds because for distinct $x, y \in X$ there is a clopen $U \subseteq X$ that separates $x$ and $y$, namely $U \assign f^{-1} [\{ f_{x} \}]$ for any $f \in \Comp \X$ which separates $x$ and $y$. The other claims are immediate consequences.
\end{proof}

  Every $\LA$-space has a separated quotient:

\begin{definition}
\label{def: separated quotient}
  Each $\LA$-space $\X$ determines an equivalence relation $\theta_{\X}$ on $X$:
\begin{align*}
  \pair{x}{y} \in \theta_{\X} \text{ for } x, y \in X \iff f_{x} = f_{y} \text{ for each } f \in \Comp \X.
\end{align*}
  Each $f \in \Comp \X$ then determines a function
  \begin{align*}
  f / \theta_{\X} \colon X / \theta_{X} & \longrightarrow \LA, \\
  x / \theta_{\X} &\longmapsto f_{x},
\end{align*}
  which is continuous with respect to the quotient topology on $X / \theta_{\X}$.

  The \emph{separated quotient} of $\X$, denoted by $\Sep \X$, consists of the space $X / \theta_{\X}$ with the quotient topology and $\Comp \Sep \X \assign \set{f / \theta_{\X} \in \Cont(X / \theta_{\X}, \LA)}{f \in \Comp \X}$. The quotient map $\pi_{\X}\colon x \mapsto x / \theta_{\X}$ is a continuous $\LA$-map $\pi_{\X}\colon \X \to \Sep \X$.
\end{definition}

\begin{remark}
  The equivalence relation $\theta_{\X}$ is closed as a subset of $X \times X$: for each $a \in \alg{A}$ the relation $\theta_{a} \assign \set{\pair{x}{y} \in \A}{a_x = a_y}$ is clopen due to being the preimage of the (clopen) equality relation $\Delta_{\LA} \subseteq \LA \times \LA$ under the continuous function $a \times a \colon \A \times \A \to \LA \times \LA$, and $\theta_{\X} = \bigcap_{a \in \A} \theta_{a}$.
\end{remark}

  The separated quotient construction yields a functor $\Sep\colon \LSpa \to \SepLSpa$ mapping a continuous $\LA$-map $\phi\colon \X \to \Y$ to the continuous $\LA$-map
\begin{align*}
  \Sep \phi\colon \Sep \X & \longto \Sep \Y \\
  x / \theta_{\X} & \longmapsto \phi(x) / \theta_{\Y}.
\end{align*}
  This map is well-defined: if $\pair{x}{y} \in \theta_{\X}$, then $g_{\phi(x)} = (g \circ \phi)_{x} = (g \circ \phi)_{y} = g_{\phi(y)}$ for each $g \in \Comp \Y$, and so $\pair{\phi(x)}{\phi(y)} \in \theta_{\Y}$.

\begin{fact}
  The separated quotient functor $\Sep\colon \LSpa \to \SepLSpa$ is left adjoint to the inclusion functor $\SepLSpa \into \LSpa$, the unit being the quotient map~$\pi$.
\end{fact}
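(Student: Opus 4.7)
The plan is to verify the universal property of the unit $\pi_\X \colon \X \to \Sep \X$: for every separated $\LA$-space $\Y$ and every continuous $\LA$-map $\phi \colon \X \to \Y$, there is a unique continuous $\LA$-map $\bar{\phi} \colon \Sep \X \to \Y$ with $\bar{\phi} \circ \pi_\X = \phi$. Since $\pi_\X$ is surjective, uniqueness is automatic, so the content is existence, and the key place where separation of $\Y$ enters is the well-definedness of $\bar{\phi}$.

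The definition to try is the obvious one: $\bar{\phi}(x/\theta_\X) \assign \phi(x)$. For well-definedness, suppose $\pair{x}{y} \in \theta_\X$, i.e.\ $f_x = f_y$ for every $f \in \Comp \X$. Applying this to the compatible functions of the form $g \circ \phi$ for $g \in \Comp \Y$ (which lie in $\Comp \X$ because $\phi$ is an $\LA$-map), we get $g_{\phi(x)} = g_{\phi(y)}$ for every $g \in \Comp \Y$. Because $\Y$ is separated, this forces $\phi(x) = \phi(y)$, as required.

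It then remains to check that $\bar{\phi}$ is a continuous $\LA$-map. Continuity is immediate from the universal property of the quotient topology, since $\bar{\phi} \circ \pi_\X = \phi$ is continuous and $\pi_\X$ is a topological quotient. For the $\LA$-map property, take $g \in \Comp \Y$; then $g \circ \bar{\phi}$ sends $x / \theta_\X$ to $g(\phi(x)) = (g \circ \phi)_x$, so $g \circ \bar{\phi} = (g \circ \phi)/\theta_\X$, which lies in $\Comp \Sep \X$ by the very definition of the compatible algebra on the separated quotient (since $g \circ \phi \in \Comp \X$).

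Finally, naturality of the bijection $\SepLSpa(\Sep \X, \Y) \iso \LSpa(\X, \Y)$ in $\X$ and in $\Y$ is a routine check once one observes that the correspondence $\phi \mapsto \bar{\phi}$ is the inverse of precomposition with $\pi_\X$. The only step I expect to require a moment of care is well-definedness of $\bar{\phi}$; everything else is formal.
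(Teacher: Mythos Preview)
Your proof is correct and follows essentially the same approach as the paper: both verify the universal property of $\pi_{\X}$ by defining the factorization $\bar{\phi}(x/\theta_{\X}) \assign \phi(x)$, use separation of $\Y$ for well-definedness, the quotient topology for continuity, and the description of $\Comp \Sep \X$ for the $\LA$-map condition. Your identification of well-definedness as the only nontrivial step matches the paper's emphasis exactly.
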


\begin{proof}
  Consider $\LA$-spaces $\X$ and $\Y$ with $\Y$ separated. Then we have an isomorphism $\LSpa(\X, \Y) \iso \LSpa(\Sep \X, \Y)$ (natural in $\X$ and $\Y$): clearly each continuous $\LA$-map $\Sep \X \to \Y$ yields a continuous $\LA$-map $\X \to \Y$ when precomposed with $\pi_{\X}$. Conversely, for each continuous $\LA$-map $\phi$ and all $x, y \in \X$, if $\pair{x}{y} \in \theta_{\X}$, then $(g \circ \phi)_{x} = (g \circ \phi)_{y}$ for every $g \in \Comp \Y$, so $g_{\phi(x)} = g_{\phi(y)}$ for every $g \in \Comp \Y$, and thus $\phi(x) = \phi(y)$ because $\Y$ is separated. The continuous $\LA$-map $\phi$ therefore factors as $\phi =  \psi \circ \pi_{\X}$, where $\psi\colon x / \theta_{\X} \mapsto \phi(x)$ is a continuous map because topologically $\Sep \X$ is a quotient of $\X$, and it is an $\LA$-map because $g \in \Comp \Y$ implies $g \circ \phi \in \Comp \X$ and thus $(g \circ \phi) / \theta_{\X} \in \Comp \Sep \X$. Moreover, this map $\psi$ is the unique map $\psi\colon \Sep \X \to \Y$ such that $\phi = \psi \circ \pi_{\X}$.
\end{proof}

\begin{lemma} \label{lemma: separated quotient preserves properties}
	Consider an $\LA$-space $\X$. Then:
	\begin{enumerateroman}
		\item \label{i:sep-quot-is-Haus}
		In $\Sep \X$, clopens separate points.
		\item \label{i:sep-quot-of-compact-is-Stone}
		If $\X$ is compact, then $\Sep \X$ is Stone.
		\item \label{i:sep-quot-of-compact-is-full}
		If $\X$ is full, then so is $\Sep \X$.
		\item \label{i:sep-quot-creg} If $\X$ is a completely $\LA$-regular $\LA$-space, then so is $\Sep \X$, and moreover $\theta_{\X}$ is the relation of topological indistinguishability.
	\end{enumerateroman}
\end{lemma}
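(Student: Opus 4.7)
The approach is to treat the four parts as follows: (i) and (ii) reduce immediately to Lemma~\ref{l:separating-then} once one observes that $\Sep \X$ is separated by construction; (iii) hinges on the fact that $\Comp \pi_{\X}\colon \Comp \Sep \X \to \Comp \X$ is an isomorphism of algebras; and (iv) rests on a basis argument that exploits the $\theta_{\X}$-saturation of the distinguished basic opens of $\X$.

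First I would verify that $\Sep \X$ is separated: if $[x] \neq [y]$ in $\Sep \X$, then $\pair{x}{y} \notin \theta_{\X}$, so some $f \in \Comp \X$ has $f_{x} \neq f_{y}$, and then $f / \theta_{\X} \in \Comp \Sep \X$ distinguishes $[x]$ from $[y]$. Part (i) is now immediate from Lemma~\ref{l:separating-then}\ref{i:sep-is-Haus}. For (ii), note that $\Sep \X$ is a continuous image of $\X$, hence compact whenever $\X$ is, and compact separated $\LA$-spaces are Stone by Lemma~\ref{l:separating-then}\ref{i:sep-of-compact-is-Stone}. For (iii), I would observe that $\Comp \pi_{\X}\colon g \mapsto g \circ \pi_{\X}$ is an isomorphism $\Comp \Sep \X \to \Comp \X$, since by definition of $\Comp \Sep \X$ the assignment $f \mapsto f/\theta_{\X}$ is a two-sided inverse. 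Given a homomorphism $h\colon \Comp \Sep \X \to \LA$, I would apply fullness of $\X$ to the composite $h \circ (\Comp \pi_{\X})^{-1}\colon \Comp \X \to \LA$ to produce $x \in X$ with $h(f/\theta_{\X}) = f_{x} = (f/\theta_{\X})_{[x]}$ for every $f \in \Comp \X$, which witnesses fullness of $\Sep \X$.

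For (iv), assume $\X$ is completely $\LA$-regular, so the sets $f^{-1}[\{a\}]$ for $f \in \Comp \X$ and $a \in \LA$ form a basis of $X$. Each such basic open is $\theta_{\X}$-saturated, because $\theta_{\X}$ preserves the value of every compatible function, and one computes directly that $\pi_{\X}[f^{-1}[\{a\}]] = (f/\theta_{\X})^{-1}[\{a\}]$. I would then argue that these images form a basis of the quotient topology: given an open $V \subseteq \Sep \X$ and a point $[x] \in V$, the preimage $\pi_{\X}^{-1}[V]$ is open in $X$ and contains $x$, so it contains a basic open $f^{-1}[\{a\}]$ through $x$; since $f^{-1}[\{a\}]$ is saturated, its image $(f/\theta_{\X})^{-1}[\{a\}]$ is open in $\Sep \X$, contains $[x]$, and is contained in $V$. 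This exhibits $\Sep \X$ as completely $\LA$-regular.

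For the final topological-indistinguishability claim, I would unwind the definitions in both directions: $\pair{x}{y} \in \theta_{\X}$ means that $f_x = f_y$ for every $f \in \Comp \X$, so $x$ and $y$ belong to exactly the same basic opens $f^{-1}[\{a\}]$ and are therefore topologically indistinguishable; conversely, indistinguishability forces $y \in f^{-1}[\{f_{x}\}]$ for every $f \in \Comp \X$, yielding $f_{y} = f_{x}$ and hence $\pair{x}{y} \in \theta_{\X}$. The only step requiring genuine thought is the basis argument in (iv), where saturation of the distinguished basic opens is the crux; the other parts amount to unwinding definitions and invoking Lemma~\ref{l:separating-then}.
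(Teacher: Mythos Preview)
Your proposal is correct and follows essentially the same approach as the paper's proof: both reduce (i) and (ii) to the separated lemma plus preservation of compactness under quotients, handle (iii) via the bijection $f \leftrightarrow f/\theta_{\X}$ between $\Comp \X$ and $\Comp \Sep \X$, and treat (iv) by identifying the basic opens of $\Sep \X$ as $(f/\theta_{\X})^{-1}[\{a\}]$. Your argument in (iv) is a bit more explicit about why the images of the saturated basic opens form a basis of the quotient topology, but the underlying ideas are identical.
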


\begin{proof}
	\eqref{i:sep-quot-is-Haus} follows from \cref{lemma: properties of separated l-spaces}.\eqref{i:sep-is-Haus}.
		
	\eqref{i:sep-quot-of-compact-is-Stone} follows from \eqref{i:sep-quot-is-Haus} because every quotient of a compact space is compact.

	\eqref{i:sep-quot-of-compact-is-full} holds because composing each homomorphism $h\colon \Comp \Sep \X \to \LA$ with the map $f \mapsto f / \theta_{\X}$ yields a homomorphism $\Comp \X \to \LA$, which has the form $\pi_{x}$ for some $x \in \X$, and so $h$ has the form $\pi_{x / \theta_{\X}}$.

	\eqref{i:sep-quot-creg} if $\X$ is a completely $\LA$-regular $\LA$-space $\X$, then
\begin{align*}
  \pair{x}{y} \in \theta_{\X} \iff & f_{x} = f_{y} \text{ for each } f \in \Comp \X \\
  \iff & \text{for each } f \in \Comp \X \text{ and } a \in \LA \\
   & x \in f^{-1} [\{ a \}] \text{ if and only if } y \in f^{-1} [\{ a \}], \\
  \iff & \text{$x$ and $y$ are topologically indistinguishable},
\end{align*}
  using the fact that the sets of the form $f^{-1} [\{ a \}]$ form a basis for $\X$. The rest of the claim holds because each basic open $f^{-1} [\{ a \}] / \theta_{\X}$ of $\Sep \X$ has the form $(f / \theta_{\X})^{-1} [\{ a \}]$ and $f / \theta_{\X} \in \Comp \Sep \X$.
\end{proof}

\section{\texorpdfstring{Duality for $\LA$-algebras}{Duality for L-algebras}}
\label{sec: duality for l-algebras}

  In this section, we establish a dual adjunction between $\LA$-algebras and $\LA$-spaces, and restrict it to a dual equivalence between $\Alg$ and a full subcategory of $\LSpa$, namely the category of completely $\LA$-regular full separated $\LA$-spaces. This duality is not in and of itself a very useful one, owing to the fact that being completely $\LA$-regular and being full are complicated conditions, but it will serve as a springboard towards the more useful dualities established later in this paper.

\subsection{\texorpdfstring{The spectrum of an $\LA$-algebra}{The spectrum of an L-algebra}}

  Our first task is to describe a left adjoint of the functor $\Comp\colon \LSpa^{\op} \to \Alg$ which assigns to each $\LA$-space its algebra of compatible functions. Given $\LA$-algebras $\A$ and $\B$, we use $\Alg(\A, \B)$ to denote the set of all homomorphisms from $\A$  to $\B$.

\begin{definition}
\label{d:spectrum}
  The \emph{spectrum} of an $\LA$-algebra $\A$ is the $\LA$-space
\begin{align*}
  \Spec \A \assign \langle \Alg(\A, \LA), \unit_{\A}[\A] \rangle,
\end{align*}
   where
\begin{enumerateroman}
\item $\Alg(\A, \LA) \subseteq \LA^{A}$ has the subspace topology of the product topology, and
\item $\unit_{\A}$ is the following embedding of $\LA$-algebras:
\begin{align*}
  \unit_{\A}\colon \A & \longinto \LA^{\Alg(\A, \LA)} \\
  a & \longmapsto (h \mapsto h(a)).
\end{align*}
\end{enumerateroman}
We will call $\unit_\A$ the \emph{canonical representation} of $\A$.
\end{definition}

\begin{remark} \label{rem: clopen subbasis}
  Because $\LA$ is topologically discrete, $\Alg(\A, \LA)$ is a zero-dimensional Hausdorff space. One subbasis of $\Alg(\A, \LA)$ consists of the clopen sets
\begin{align*}
  & U_{a \mapsto W} \assign \set{h\colon \A \to \LA}{h(a) \in W} & & \text{for } a \in \A \text{ and } W \subseteq \LA.
\end{align*}
  The complement of a set of the form $U_{a \mapsto W}$ is another set of the same form, namely $U_{a \mapsto (\LA - W)}$.
  Therefore, the sets $U_{a \mapsto W}$ are indeed clopen and form a subbasis for the closed sets in addition to being a subbasis for the opens. The continuity of $\unit_{\A}$ follows from the observation that $\unit_{\A}(a)^{-1}[W] = U_{a \mapsto W}$.
\end{remark}

  Each homomorphism of $\LA$-algebras $h\colon \A \to \B$ induces a continuous $\LA$-map
\begin{align*}
  \Spec h\colon \Spec \B & \longto \Spec \A, \\
  g & \longmapsto g \circ h.
\end{align*}
  Consequently, $\Spec$ extends to a functor $\Spec\colon \Alg \to \LSpa^{\op}$.

\begin{lemma} \label{lemma: spec is full separated}
  $\Spec \A$ is a completely $\LA$-regular full separated $\LA$-space for ${\A \hskip-0.25pt \in \hskip-0.25pt \Alg}$.
\end{lemma}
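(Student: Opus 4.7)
The plan is to verify the three conditions---complete $\LA$-regularity, separatedness, and fullness---directly from the definitions, leveraging the explicit description $\Spec \A = \langle \Alg(\A, \LA), \unit_\A[\A] \rangle$ and the fact that the topology on $\Alg(\A, \LA)$ is the subspace topology inherited from the product topology on $\LA^{A}$ (with $\LA$ discrete). The key observation underlying all three verifications is that the compatible function $\unit_{\A}(a)\colon \Alg(\A, \LA) \to \LA$, $h \mapsto h(a)$, is exactly the restriction to $\Alg(\A, \LA)$ of the projection $\pi_{a}\colon \LA^{A} \to \LA$, and that $\unit_{\A}$ surjects $\A$ onto $\Comp \Spec \A$ by construction.

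For complete $\LA$-regularity, I would note that the product topology on $\LA^{A}$ is by definition initial with respect to the family of projections $\{\pi_{a}\}_{a \in A}$, so the subspace topology on $\Alg(\A, \LA)$ is initial with respect to $\{\unit_{\A}(a)\}_{a \in A} = \Comp \Spec \A$, which is exactly what the definition of complete $\LA$-regularity requires. Separatedness is equally immediate: if $h_{1} \neq h_{2}$ in $\Alg(\A, \LA)$, pick some $a \in A$ with $h_{1}(a) \neq h_{2}(a)$; then $\unit_{\A}(a) \in \Comp \Spec \A$ witnesses that the two points are distinguished by a compatible function.

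For fullness, given a homomorphism $H\colon \Comp \Spec \A \to \LA$, the natural candidate point is $h \assign H \circ \unit_{\A} \in \Alg(\A, \LA)$. One then verifies $H(f) = f_{h}$ for every $f \in \Comp \Spec \A$ by evaluating on an arbitrary $\unit_{\A}(a)$: one has $H(\unit_{\A}(a)) = (H \circ \unit_{\A})(a) = h(a) = \unit_{\A}(a)(h)$, and this suffices because $\unit_{\A}$ is surjective onto $\Comp \Spec \A$. I do not anticipate any real obstacle, since the lemma amounts to unpacking the definitions of the spectrum and of the three properties; the one detail worth highlighting is the surjectivity of $\unit_{\A}$ onto $\Comp \Spec \A$, which turns the fullness verification into a universal-property argument.
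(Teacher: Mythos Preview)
Your proposal is correct and follows essentially the same approach as the paper's proof: the paper verifies separation as immediate, proves fullness by taking $x \assign H \circ \unit_{\A}$ and computing $H(\unit_{\A}(a)) = \unit_{\A}(a)(x)$ exactly as you do, and establishes complete $\LA$-regularity by observing that the subbasic sets $U_{a \mapsto b}$ have the form $\unit_{\A}(a)^{-1}[\{b\}]$, which is just a rephrasing of your initial-topology argument.
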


\begin{proof}
  Separation is clear from the definition of $\Spec \A$. To prove that $\Spec \A$ is full, consider a homomorphism $h\colon \Comp \Spec \A \to \LA$. Each $f \in \Comp \Spec \A$ has the form $\unit_{\A}(a)$ for some $a \in \A$, and so $h(f) = h(\unit_{\A}(a)) = (h \circ \unit_{\A})(a) = \unit_{\A}(a)(h \circ \unit_{\A}) = f_{x}$ for $x \assign h \circ \unit_{\A}$. The $\LA$-space $\Spec \A$ is $\LA$-regular because the sets $U_{a \mapsto b}$ for $a \in \A$ and $b \in \LA$ form a basis for $\Spec \A$, and $U_{a \mapsto b}$ has the form $f^{-1}[\{ b \}]$ for $f \assign\unit_{\A}(a)$.
\end{proof}

\begin{fact} \label{fact: unit is iso}
  For every $\LA$-algebra~$\A$, the map $\unit_{A}\colon\hskip-0.8pt \A \into \Comp \Spec \A$ is an isomorphism.
\end{fact}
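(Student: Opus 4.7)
The plan is to observe that the surjectivity of $\unit_\A$ onto $\Comp \Spec \A$ is built into the construction of $\Spec \A$, so that only the homomorphism property and injectivity of $\unit_\A$ remain to be verified.

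First, by \cref{d:spectrum} we have $\Comp \Spec \A \assign \unit_\A[\A]$, so $\unit_\A\colon \A \to \Comp \Spec \A$ is surjective by definition. Next, I would check that $\unit_\A$ is a homomorphism. For each $n$-ary operation symbol $f$ in the signature of $\LA$ and each $a_1, \dots, a_n \in \A$, evaluation at a homomorphism $h \in \Alg(\A, \LA)$ gives
\begin{align*}
\unit_\A(f^\A(a_1, \dots, a_n))(h) &= h(f^\A(a_1, \dots, a_n)) = f^\LA(h(a_1), \dots, h(a_n)) \\
&= f^\LA(\unit_\A(a_1)(h), \dots, \unit_\A(a_n)(h)),
\end{align*}
using the pointwise computation of operations on $\LA^{\Alg(\A, \LA)}$ and the fact that $h$ itself is a homomorphism; this shows $\unit_\A$ preserves every basic operation.

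For injectivity, I would use the defining property of $\Alg = \IOp \SOp \POp (\LA)$: since $\A$ is an $\LA$-algebra, there exist a set $X$ and an embedding $\iota\colon \A \into \LA^X$. For each $x \in X$, the composite $\pi_x \circ \iota \colon \A \to \LA$ is a homomorphism, hence an element of $\Alg(\A, \LA)$. Given two distinct elements $a, b \in \A$, there is some $x \in X$ with $\iota(a)(x) \neq \iota(b)(x)$, i.e.\ $(\pi_x \circ \iota)(a) \neq (\pi_x \circ \iota)(b)$. Taking $h \assign \pi_x \circ \iota$ we obtain $\unit_\A(a)(h) \neq \unit_\A(b)(h)$, so $\unit_\A(a) \neq \unit_\A(b)$.

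There is no real obstacle here: the argument is essentially bookkeeping, and the substantive ingredient is the universal algebraic fact that elements of an $\LA$-algebra are separated by homomorphisms into $\LA$, which follows immediately from the definition of $\Alg$ as $\IOp \SOp \POp(\LA)$. Combining the three verifications above yields that $\unit_\A$ is a bijective homomorphism between $\LA$-algebras, and hence an isomorphism.
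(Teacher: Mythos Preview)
Your proof is correct. The paper does not actually give a proof of this fact: it already calls $\unit_\A$ an ``embedding of $\LA$-algebras'' in \cref{d:spectrum}, and since $\Comp \Spec \A$ is \emph{defined} as $\unit_\A[\A]$, the isomorphism claim is immediate from that. Your argument simply unpacks why $\unit_\A$ deserves to be called an embedding in the first place, which is exactly the right thing to do.
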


\subsection{\texorpdfstring{$\LA$ as a dualizing object}{L as a dualizing object}}

  The dual adjunction between the category $\Alg$ of $\LA$-algebras and the category $\LSpa$ of $\LA$-spaces falls under the framework of \emph{concrete dualities} of Porst \& Tholen~\cite{PorstTholen1991}. That is, it is induced in a precise sense by a dualizing object which lives both in a category of algebras and a category of spaces. To make sense of this, we must first note that $\Alg$ and $\LSpa$ are \emph{concrete categories}: they are equipped with a faithful functor $\abs{\cdot}$ into the category $\Set$ of sets and functions, namely the underlying set functors. Showing that $\LA$ is a dualizing object involves equipping the underlying set $L$ of the algebra $\LA$ with the structure of an $\LA$-space $\LS$ and proving that the functors $\abs{\Spec -}$ and $\abs{\Comp -} = \LSpa(-, \LS)$ as well as the underlying set functors on $\Alg$ and $\LSpa$ are representable. The theory developed in~\cite{PorstTholen1991} then allows us (after a straightforward verification of two technical conditions) to conclude that $\Comp$ and $\Spec$ form a dual adjunction.

\begin{fact} \label{fact: free on 1}
  The subalgebra $\FA$ of $\LA^{L}$ generated by the identity map $\idmap_{\LA} \in \LA^{L}$ is a free $\LA$-algebra freely generated by the element $\idmap_{\LA}$.
\end{fact}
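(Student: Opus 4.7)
The plan is to verify the universal property of a free algebra on one generator directly. Since $\FA$ is by definition generated by the single element $\idmap_\LA$, every element of $\FA$ has the form $t^{\FA}(\idmap_\LA)$ for some unary term $t$ in the signature of $\LA$. Thus for any $\LA$-algebra $\B$ and any element $b \in B$, any homomorphism $h\colon \FA \to \B$ with $h(\idmap_\LA) = b$ is forced to satisfy $h(t^{\FA}(\idmap_\LA)) = t^{\B}(b)$; this gives uniqueness for free and tells us what the definition of $h$ must be.

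The task therefore reduces to showing that this prescription is well-defined: if two unary terms $s$ and $t$ satisfy $s^{\FA}(\idmap_\LA) = t^{\FA}(\idmap_\LA)$, then $s^{\B}(b) = t^{\B}(b)$ for every element $b$ of every $\LA$-algebra $\B$. Here the key observation is that $s^{\FA}(\idmap_\LA) = t^{\FA}(\idmap_\LA)$ is an equation between elements of $\LA^L$, so it unfolds, pointwise in $a \in L$, to $s^{\LA}(a) = t^{\LA}(a)$. In other words, the identity $s \equals t$ holds in $\LA$ itself.

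Now I invoke the defining fact that $\Alg = \IOp \SOp \POp (\LA)$. Since the class operators $\IOp$, $\SOp$, $\POp$ preserve identities, every identity that holds in $\LA$ holds in every $\LA$-algebra $\B$. Hence $s \equals t$ holds in $\B$, which means $s^{\B}(b) = t^{\B}(b)$ for every $b \in B$, as required. Having established well-definedness, the map $h\colon \FA \to \B$ defined by $t^{\FA}(\idmap_\LA) \mapsto t^{\B}(b)$ is easily checked to be a homomorphism: if $\omega$ is an $n$-ary basic operation and $t_1, \dots, t_n$ are unary terms, then $\omega^{\FA}(t_1^{\FA}(\idmap_\LA), \dots, t_n^{\FA}(\idmap_\LA)) = (\omega(t_1, \dots, t_n))^{\FA}(\idmap_\LA)$, which is mapped to $(\omega(t_1, \dots, t_n))^{\B}(b) = \omega^{\B}(t_1^{\B}(b), \dots, t_n^{\B}(b))$.

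There is no real obstacle here; the only conceptual point is the translation between the equation $s^{\FA}(\idmap_\LA) = t^{\FA}(\idmap_\LA)$ in the one-generated subalgebra and the identity $s \equals t$ in $\LA$, which is immediate from reading off the pointwise values of $\idmap_\LA$. Everything else is formal, and the result indeed expresses what it should: $\FA$ is the free one-generated algebra in the prevariety $\Alg$.
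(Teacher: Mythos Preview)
Your proof is correct. The paper states this as a Fact without proof, treating it as a standard result in universal algebra; your argument is precisely the standard verification of the universal property for the free one-generated algebra in a prevariety $\IOp\SOp\POp(\LA)$, and there is nothing to fault in it.
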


\begin{definition}
  The $\LA$-space $\LS$ consists of the discrete set $L$ with $\Comp \LS \assign \FA$.
\end{definition}

\begin{lemma} \label{lemma: dualizing is representable}
~ 
\begin{enumerateroman}
\item $\abs{\Spec -} = \Alg(-, \LA)$.
\item $\abs{\Comp -} = \LSpa (-, \LS)$.
\end{enumerateroman}
\end{lemma}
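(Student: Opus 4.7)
The plan is to unpack the definitions in both parts and verify the stated equalities of functors at the level of both objects and morphisms. Part~(i) should be essentially immediate: by \cref{d:spectrum}, $\Spec \A$ has underlying set $\Alg(\A, \LA)$, and the definition of $\Spec$ on a homomorphism $h \colon \A \to \B$ sends $g \in \Alg(\B, \LA)$ to $g \circ h \in \Alg(\A, \LA)$, which is exactly how the representable functor $\Alg(-, \LA)$ acts. So one just has to observe these two equalities hold on the nose.

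For part~(ii), the plan is to exhibit, for each $\LA$-space $\X$, a set-theoretic equality between $\abs{\Comp \X}$ and $\LSpa(\X, \LS)$, under the identification that views a function $f \colon X \to L$ either as a compatible function on $\X$ or as a morphism into $\LS$. For the inclusion $\Comp \X \subseteq \LSpa(\X, \LS)$, I take $f \in \Comp \X$: continuity of $f$ as a map into $\LS$ is automatic, since $\LS$ is discrete and $f \in \Cont(X, \LA)$ by definition of $\Comp \X$, so the main step is reflection of compatibility. Here the key input is \cref{fact: free on 1}: every $g \in \Comp \LS = \FA$ has the form $g = t^{\FA}(\idmap_{\LA})$ for some unary term $t$ in the signature of $\LA$. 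Because term operations are computed pointwise in powers of $\LA$, this yields $g \circ f = t^{\LA^{X}}(f)$, which lies in $\Comp \X$ because $\Comp \X$ is a subalgebra of $\LA^{X}$ containing $f$. For the reverse inclusion, given a continuous $\LA$-map $\phi \colon \X \to \LS$, I apply reflection of compatibility to the generator $\idmap_{\LA} \in \FA$ itself, obtaining $\phi = \idmap_{\LA} \circ \phi \in \Comp \X$.

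Naturality will be automatic once the two sides are identified on underlying functions: for a continuous $\LA$-map $\psi \colon \X \to \Y$, both $\abs{\Comp \psi}$ and $\LSpa(\psi, \LS)$ send $g$ to $g \circ \psi$. The only mildly nontrivial step in the whole argument is the term-computation for reflection of compatibility, which hinges entirely on \cref{fact: free on 1}: freeness of $\FA$ on $\idmap_{\LA}$ is precisely what guarantees that $\Comp \LS$ is small enough for $\Comp \X$-compatible functions to reflect it. Everything else is definitional bookkeeping.
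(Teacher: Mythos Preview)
Your proof is correct and follows essentially the same approach as the paper: part~(i) is immediate from the definition of $\Spec$, and for part~(ii) both you and the paper reduce reflection of compatibility to the single generator $\idmap_{\LA}$ of $\Comp \LS = \FA$, with naturality handled by noting that all functors act by precomposition. Your argument is simply a more detailed version of the paper's, spelling out the term computation $g \circ f = t^{\LA^{X}}(f)$ that the paper leaves implicit in its claim that checking the generator suffices.
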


\begin{proof}
  It suffices to verify that $\abs{\Spec \A} = \Alg(\A, \LA)$ and $\abs{\Comp \X} = \LSpa(\X, \LS)$ for each $\A \in \Alg$ and $\X \in \LSpa$, since at the level of morphisms all four functors are defined by precomposition. The first equality holds by the definition of $\Spec$. To prove the second equality, consider an $\LA$-space $\X$ and a continuous map $f\colon X \to L$. Because $\Comp \LS$ is generated by $\idmap_{\LA}$, the map $f$ reflects compatibility if and only if $f \circ \idmap_{\LA} \in \Comp \X$, i.e.\ if and only if $f \in \Comp \X$.
\end{proof}

\begin{definition}
  The $\LA$-space $\FS$ has the singleton space $\{ \Singleton \}$ as its underlying space and $\Comp \FS \assign \LA^{\{ \Singleton \}}$.
\end{definition}

\begin{lemma} \label{lemma: underlying is representable}
~ 
\begin{enumerateroman}
\item $\Alg(\FA, -) = \abs{-}$ in
 the category $\Alg$.
\item $\LSpa(\FS, -) = \abs{-}$ in the category $\LSpa$.
\end{enumerateroman}
\end{lemma}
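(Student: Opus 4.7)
The plan is to check both identities by explicit description of the morphism sets, using only the defining universal properties of $\FA$ and $\FS$ together with naturality in the argument.

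For part (i), I would invoke Fact~\ref{fact: free on 1}: since $\FA$ is freely generated by the single element $\idmap_{\LA}$, any homomorphism $h\colon \FA \to \A$ in $\Alg$ is uniquely determined by $h(\idmap_{\LA}) \in \abs{\A}$, and every element of $\abs{\A}$ arises this way. The evaluation map $\mathrm{ev}_{\idmap_{\LA}}\colon \Alg(\FA, \A) \to \abs{\A}$ given by $h \mapsto h(\idmap_{\LA})$ is thus a bijection. Naturality in $\A$ is immediate: for any homomorphism $k\colon \A \to \B$, the square commutes since $(k \circ h)(\idmap_{\LA}) = k(h(\idmap_{\LA}))$, which is exactly the compatibility of $\mathrm{ev}_{\idmap_{\LA}}$ with the action of $k$ on both sides via postcomposition and via $\abs{k}$.

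For part (ii), I would unwind the definition of $\FS$. A continuous $\LA$-map $\phi\colon \FS \to \X$ consists of a continuous map of underlying spaces $\phi\colon \{ \Singleton \} \to X$---which is just a choice of point $\phi(\Singleton) \in \abs{\X}$, automatically continuous since $\{ \Singleton \}$ is discrete---together with the requirement that $g \in \Comp \X$ implies $g \circ \phi \in \Comp \FS$. But $\Comp \FS = \LA^{\{ \Singleton \}}$ is the whole function set, so the compatibility condition is vacuous. Hence the evaluation $\mathrm{ev}_{\Singleton}\colon \LSpa(\FS, \X) \to \abs{\X}$, $\phi \mapsto \phi(\Singleton)$, is a bijection; naturality in $\X$ is again immediate from the definition of composition.

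There is no real obstacle to overcome here; the only point that needs care is to observe that $\Comp \FS$ is taken to be all of $\LA^{\{ \Singleton \}}$, which is what makes the compatibility-reflection condition automatic and thereby ensures that $\FS$ truly represents the underlying-set functor on $\LSpa$ rather than some proper subfunctor. Both bijections are also concrete, agreeing with the action of the two forgetful functors $\abs{\cdot}$ on morphisms, which is precisely what is needed to apply the Porst--Tholen framework alluded to in the preceding discussion.
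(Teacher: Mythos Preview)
Your proposal is correct and follows the same approach as the paper: both parts are handled by observing that $\FA$ is free on one generator (so homomorphisms out of it correspond to elements) and that every set map out of the singleton is automatically a continuous $\LA$-map because $\Comp \FS$ is all of $\LA^{\{\Singleton\}}$. You spell out the bijections and naturality more explicitly than the paper does, but the argument is the same.
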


\begin{proof}
  Again it suffices to verify that $\Alg(\FA, \A) = \abs{\A}$ and $\LSpa(\FS, \X) = \abs{\X}$ for $\A \in \Alg$ and $\X \in \LSpa$. The first equality is an immediate consequence of the fact that $\FA$ is a free $1$-generated $\LA$-algebra (Fact~\ref{fact: free on 1}). The second equality follows from the fact that each map of sets $\{ \Singleton \} \to \X$ is an $\LA$-map.
\end{proof}

  The next two facts will not be needed in what follows. Nonetheless, they are worth stating because they clarify the relations between $\FA$, $\LA$, $\LS$, and $\Spec \LA$.\footnote{The fact that $\Spec \FA \iso \LS$ and $\Comp \FS \iso \LA$ in fact follows directly from the general theory of dualities induced by dualizing objects \cite[Prop.~1.2]{PorstTholen1991}. The reason why we explicitly state Facts~\ref{fact: spec fa iso ls} and~\ref{fact: ccomp fs iso la} is to record the maps witnessing these isomorphisms.}

\begin{fact} \label{fact: spec fa iso ls}
  The map $h \mapsto h(\idmap_{\LA})$ is an isomorphism $\Spec \FA \iso \LS$.
\end{fact}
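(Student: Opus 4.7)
Define $\Phi \colon \Spec \FA \to \LS$ by $\Phi(h) \assign h(\idmap_{\LA})$. The plan is to verify in order that $\Phi$ is (i) a bijection of underlying sets, (ii) a homeomorphism of underlying topological spaces, and (iii) an isomorphism of $\LA$-spaces, by checking that both $\Phi$ and $\Phi^{-1}$ reflect compatibility.

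For (i), I would invoke Fact~\ref{fact: free on 1}: since $\FA$ is freely generated by $\idmap_{\LA}$, any choice of value $\ell \in L$ for the generator extends uniquely to a homomorphism $h_{\ell}\colon \FA \to \LA$. Thus $\ell \mapsto h_{\ell}$ is a two-sided inverse to $\Phi$, and concretely $h_{\ell}$ is just evaluation at $\ell$, i.e.\ $h_{\ell}(g) = g(\ell)$ for every $g \in \FA \subseteq \LA^{L}$. For (ii), since $\LS$ is discrete, $\Phi^{-1}$ is automatically continuous, while $\Phi$ is continuous because for each $\ell \in L$ the preimage $\Phi^{-1}[\{\ell\}]$ equals the subbasic clopen $U_{\idmap_{\LA} \mapsto \{\ell\}}$ from Remark~\ref{rem: clopen subbasis}.

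The main (though still short) work lies in (iii). For $\Phi$ reflecting compatibility, take $g \in \Comp \LS = \FA$ and write $g = t^{\FA}(\idmap_{\LA})$ for some unary term $t$. Then for any $h \in \Alg(\FA, \LA)$,
\begin{align*}
  (g \circ \Phi)(h) = g(h(\idmap_{\LA})) = t^{\LA}(h(\idmap_{\LA})) = h(t^{\FA}(\idmap_{\LA})) = h(g) = \unit_{\FA}(g)(h),
\end{align*}
so $g \circ \Phi = \unit_{\FA}(g) \in \unit_{\FA}[\FA] = \Comp \Spec \FA$. For $\Phi^{-1}$ reflecting compatibility, take $f \in \Comp \Spec \FA$, write $f = \unit_{\FA}(a)$ with $a \in \FA$, and compute $(f \circ \Phi^{-1})(\ell) = f(h_{\ell}) = h_{\ell}(a) = a(\ell)$ using the evaluation description of $h_{\ell}$; hence $f \circ \Phi^{-1} = a \in \FA = \Comp \LS$.

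The only real obstacle is bookkeeping: keeping straight that elements of $\FA$ play a dual role as functions $L \to \LA$ (giving the formula $h_{\ell}(a) = a(\ell)$) and as formal terms in $\idmap_{\LA}$ (needed to push $g$ through a homomorphism in the first calculation). Once this is clear, the two compatibility checks amount to one application of the homomorphism property of $h$ each, and together with (i) and (ii) establish that $\Phi$ is an isomorphism in $\LSpa$.
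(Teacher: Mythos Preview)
Your proof is correct and follows essentially the same approach as the paper's: bijectivity from freeness, homeomorphism via the subbasic clopens $U_{\idmap_{\LA}\mapsto\{\ell\}}$ (the paper phrases this as showing $\Spec\FA$ is discrete), and the same term-based computation for $\Phi$ reflecting compatibility. The only cosmetic difference is that for $\Phi^{-1}$ you use the evaluation description $h_{\ell}(a)=a(\ell)$ directly, whereas the paper reuses the term expansion $a=t^{\FA}(\idmap_{\LA})$ and inverts the earlier chain of equalities; both arrive at $f\circ\Phi^{-1}=a\in\FA$.
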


\begin{proof}
  The map $\phi\colon h \mapsto h(\idmap_{\LA})$ is bijective because $\FA$ is the free $\LA$-algebra freely generated by $\idmap_{\LA}$. It is a homeomorphism because $\LA$ is discrete and so is $\Spec \FA$, since each singleton $\{ h \} \subseteq \Spec \FA$ has the form $U_{\idmap_{\LA} \mapsto h(\idmap_{\LA})}$. The map $\phi$ reflects compatibility: if $g \in \Comp \LS = \FA$, then there is some unary term $t(x)$ such that $g = t^{(\LA^{L})}(\idmap_{\LA})$, and so 
  \[
  g \circ \phi\colon h \mapsto (t^{(\LA^{L})}(\idmap_{\LA}))(h(\idmap_{\LA})) = t^{\LA}(h(\idmap_{\LA})) = h(t^{(\LA^{L})}(\idmap_{\LA})).
  \]
  That is, $g \circ \phi = \unit_{\FA}(t^{\FA}(\idmap_{\LA})) \in \Comp \Spec \FA$. The map $\phi^{-1}$ also reflects compatibility: if $g \in \Comp \Spec \FA$, then $g = \unit_{\FA}(t^{\FA}(\idmap_{\LA}))$ for some unary term $t(x)$, and so (by the above sequence of equalities) $g = t^{(\LA^{L})}(\idmap_{\LA}) \circ \phi$. Therefore, $g \circ \phi^{-1} = t^{(\LA^{L})}(\idmap_{\LA}) \in \Comp \LS$. The map $\phi$ is therefore an isomorphism.
\end{proof}

\begin{fact} \label{fact: ccomp fs iso la}
  The map $f \mapsto f_{\Singleton}$ is an isomorphism $\Comp \FS \iso \LA$.
\end{fact}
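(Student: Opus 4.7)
The plan is to observe that this is essentially an unpacking of definitions, using the universal property of products for the singleton index set. By the definition of $\FS$, we have $\Comp \FS = \LA^{\{\Singleton\}}$, the full power of $\LA$ indexed by a one-element set. The map $\ev_\Singleton\colon \LA^{\{\Singleton\}} \to \LA$ sending $f \mapsto f_\Singleton$ is the canonical projection to the unique component, so the task reduces to verifying that this projection is an isomorphism of $\LA$-algebras.

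First I would observe bijectivity: a function $f\colon \{\Singleton\} \to L$ is completely determined by its value at $\Singleton$, so the assignment $f \mapsto f_\Singleton$ is a bijection $\LA^{\{\Singleton\}} \to L$ with inverse sending $a \in L$ to the constant function with value $a$. Next, I would note that evaluation at $\Singleton$ is a homomorphism because the algebraic operations on $\LA^{\{\Singleton\}}$ are defined pointwise: for any $n$-ary operation symbol $\sigma$ and any $f_1,\dots,f_n \in \LA^{\{\Singleton\}}$, we have $\sigma(f_1,\dots,f_n)_\Singleton = \sigma^{\LA}((f_1)_\Singleton, \dots, (f_n)_\Singleton)$ by the definition of pointwise operations. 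A bijective homomorphism of algebras is an isomorphism, so the result follows.

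There is no real obstacle here; the statement is immediate once one writes out what $\Comp \FS$ is. It is recorded explicitly only so that, together with \cref{fact: spec fa iso ls}, one has concrete names for the isomorphisms $\Spec \FA \iso \LS$ and $\Comp \FS \iso \LA$ that are guaranteed abstractly by the concrete-duality framework of Porst and Tholen.
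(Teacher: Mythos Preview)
Your proof is correct; the paper in fact states this fact without proof, treating it as immediate from the definition $\Comp \FS = \LA^{\{\Singleton\}}$. Your unpacking is exactly the routine verification the paper omits.
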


\subsection{\texorpdfstring{Duality for $\LA$-algebras}{Duality for L-algebras}}

  It remains to describe the counit of the adjunction between $\Alg$ and $\LSpa$.

\begin{fact} \label{fact: ev}
  Consider an $\LA$-space $\X$. The \emph{evaluation map}
\begin{align*}
  \ev_{\X}\colon \X & \longrightarrow \Spec \Comp \X \\
   x & \longmapsto \pi_{x}
\end{align*}
  is a continuous $\LA$-map. It is the unique $\LA$-map ${\ev_{\X}\colon \X \to \Spec \Comp \X}$ such that, for every $f \in \Comp \X$,
\begin{align*}
  \unit_{\Comp \X}(f) \circ \ev_{\X} = f.
\end{align*}
\end{fact}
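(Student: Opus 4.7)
The plan is to check, in order, that $\ev_{\X}$ is well-defined, continuous, an $\LA$-map, satisfies the claimed identity, and is uniquely characterized by it. Well-definedness amounts to observing that each $\pi_{x}\colon \Comp \X \to \LA$ is a homomorphism, which is immediate because the operations of $\Comp \X \leq \LA^{X}$ are pointwise and projection at $x$ commutes with them. For continuity I would use the clopen subbasis of $\Spec \Comp \X$ recorded in Remark~\ref{rem: clopen subbasis}: the preimage under $\ev_{\X}$ of $U_{f \mapsto W} = \{ h \mid h(f) \in W \}$ is $\{ x \in X \mid f(x) \in W \} = f^{-1}[W]$, which is open since $f \in \Comp \X$ is continuous.

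For the $\LA$-map condition and the identity, I would bundle them into a single computation. By Fact~\ref{fact: unit is iso}, every compatible function on $\Spec \Comp \X$ has the form $\unit_{\Comp \X}(f)$ for a unique $f \in \Comp \X$, and for each $x \in X$ one unfolds
\[
(\unit_{\Comp \X}(f) \circ \ev_{\X})(x) = \unit_{\Comp \X}(f)(\pi_{x}) = \pi_{x}(f) = f(x).
\]
Hence $\unit_{\Comp \X}(f) \circ \ev_{\X} = f \in \Comp \X$, which simultaneously verifies the stated identity and shows that precomposition with $\ev_{\X}$ sends any compatible function on $\Spec \Comp \X$ to a compatible function on $\X$, so $\ev_{\X}$ is an $\LA$-map.

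Uniqueness is then a consequence of the separation of $\Spec \Comp \X$: if $\phi\colon \X \to \Spec \Comp \X$ is any $\LA$-map with $\unit_{\Comp \X}(f) \circ \phi = f$ for all $f \in \Comp \X$, then for each $x \in X$ and $f \in \Comp \X$,
\[
\phi(x)(f) = \unit_{\Comp \X}(f)(\phi(x)) = f(x) = \pi_{x}(f),
\]
forcing $\phi(x) = \pi_{x} = \ev_{\X}(x)$. No step presents a genuine obstacle; the proof is a straightforward unpacking of the definitions, with the only nontrivial input being the surjectivity of $\unit_{\Comp \X}$ onto $\Comp \Spec \Comp \X$ supplied by Fact~\ref{fact: unit is iso}.
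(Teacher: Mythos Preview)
Your proposal is correct and follows essentially the same route as the paper: the same continuity computation via the subbasis $U_{f \mapsto W}$, the same pointwise unfolding $(\unit_{\Comp \X}(f)\circ\ev_{\X})(x)=\pi_x(f)=f(x)$ to establish both the identity and the $\LA$-map condition, and the same uniqueness argument via pointwise determination of elements of $\Spec \Comp \X$. One small remark: you invoke Fact~\ref{fact: unit is iso} to say that every compatible function on $\Spec \Comp \X$ has the form $\unit_{\Comp \X}(f)$, but this is already immediate from the definition of $\Spec$ (namely $\Comp \Spec \A = \unit_{\A}[\A]$); the content of Fact~\ref{fact: unit is iso} is injectivity, which you do not actually need here.
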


\begin{proof}
  The map $\ev_{\X}$ reflects compatibility: for every $f \in \Comp \X$,
\begin{align*}
  (\unit_{\Comp \X}(f) \circ \ev_{\X})_{x} = (\unit_{\Comp \X}(f))(\pi_{x}) = \pi_{x}(f) = f_{x},
\end{align*}
  and therefore $\unit_{\Comp \X}(f) \circ \ev_{\X} = f \in \Comp \X$. The map $\ev_{\X}$ is continuous because, for each $a \in \X$ and $W \subseteq \LA$,
\begin{align*}
  \ev_{\X}^{-1}[U_{a \mapsto W}] = \set{x \in X}{\ev_{\X}(x)(a) \in W} = \set{x \in X}{a_{x} \in W} = a^{-1} [W].
\end{align*}
  Finally, the uniqueness claim holds because, for any $\LA$-algebra $\A$, each $h \in \Spec \A$ is uniquely determined by the map $a \mapsto \unit_{\A}(a)(h)$ for $a \in \A$.
\end{proof}

\begin{remark}
  The uniqueness condition in Fact~\ref{fact: ev} states precisely that the canonical representation $\unit_{\A}$ of an $\LA$-algebra $\A$ is in fact a terminal representation of $\A$, if we define a morphism $\phi\colon \rho_{1} \to \rho_{2}$ of representations $\rho_{1}\colon \A \into \Cont(X_{1}, \LA)$ and $\rho_{2}\colon \A \into \Cont(X_{2}, \LA)$ of $\A$ as a continuous map $\phi\colon X_{1} \to X_{2}$ such that $\rho_{1}(a) = \rho_{2}(a) \circ \phi$ for each $a \in \A$.
\end{remark}

\begin{fact} \label{fact: ev injective surjective} 
  Let $\X$ be an $\LA$-space. Then:
\begin{enumerateroman}
\item $\X$ is separated if and only if $\ev_{\X}$ is injective.
\item $\X$ is full if and only if $\ev_{\X}$ is surjective.
\item $\X$ is completely $\LA$-regular if and only if the topology of $\X$ is the initial topology with respect to $\ev_{\X}\colon \X \to \Spec \Comp \X$.
\end{enumerateroman}
\end{fact}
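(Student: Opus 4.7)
The plan is to unfold the definitions in each case and, for parts (i) and (ii), observe that they are essentially restatements of the definitions of separation and fullness in terms of the explicit formula $\ev_{\X}(x) = \pi_{x}$. For (iii), the plan is to show that the subbasis of $\Spec \Comp \X$ pulls back along $\ev_{\X}$ to the standard subbasis witnessing complete $\LA$-regularity.

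For (i), I would observe that $\ev_{\X}(x) = \ev_{\X}(y)$ means $\pi_{x} = \pi_{y}$, i.e.\ $f_{x} = f_{y}$ for every $f \in \Comp \X$. Hence $\ev_{\X}$ is injective iff any two distinct points of $X$ are separated by some compatible function, which is precisely the definition of separation.

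For (ii), by definition $\X$ is full iff every homomorphism $h\colon \Comp \X \to \LA$ satisfies $h(f) = f_{x}$ for all $f \in \Comp \X$, for some $x \in X$, i.e.\ $h = \pi_{x} = \ev_{\X}(x)$. So $\X$ is full iff every point of $\Spec \Comp \X$ lies in the image of $\ev_{\X}$, i.e.\ iff $\ev_{\X}$ is surjective.

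For (iii), I would recall from the proof of Fact~\ref{fact: ev} the computation
\[
\ev_{\X}^{-1}[U_{a \mapsto W}] = a^{-1}[W]
\]
for $a \in \Comp \X$ and $W \subseteq \LA$. Since the sets $U_{a \mapsto W}$ form a subbasis for the topology of $\Spec \Comp \X$ (Remark~\ref{rem: clopen subbasis}), the initial topology on $X$ with respect to $\ev_{\X}$ has $\{ a^{-1}[W] \mid a \in \Comp \X,\, W \subseteq \LA \}$ as a subbasis. Because $\LA$ is discrete, an arbitrary $W \subseteq \LA$ is open, so this family is in turn a subbasis for the initial topology on $X$ with respect to $\Comp \X$; in fact the sets $a^{-1}[\{ b \}]$ for $a \in \Comp \X$ and $b \in \LA$ suffice as a (sub)basis, and these are precisely the basic opens appearing in the definition of complete $\LA$-regularity. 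Therefore the topology of $\X$ is initial with respect to $\ev_{\X}$ iff it is initial with respect to $\Comp \X$, i.e.\ iff $\X$ is completely $\LA$-regular. There is no real obstacle here; the only mildly delicate point is being careful that ``initial topology'' is understood in terms of continuity into $\Spec \Comp \X$ with its topology from Remark~\ref{rem: clopen subbasis}, and I would state this explicitly when setting up the subbasis comparison.
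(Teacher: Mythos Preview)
Your proposal is correct and follows essentially the same approach as the paper: parts (i) and (ii) are immediate from the definitions, and for (iii) both you and the paper use the identity $\ev_{\X}^{-1}[U_{f \mapsto W}] = f^{-1}[W]$ (the paper states it only for singleton $W$, which suffices) to match the subbasis of $\Spec \Comp \X$ with the subbasis defining complete $\LA$-regularity.
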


\begin{proof}
  The first two claims are immediate, and the last one follows from the fact that $f^{-1}[\{ a \}] = \ev_{\X}^{-1}[U_{f \mapsto a}]$ for each $f \in \Comp \X$ and $a \in \LA$.
\end{proof}

\begin{fact} \label{fact: ev iso}
  Let $\X$ be an $\LA$-space. Then:
\begin{enumerateroman}
\item If $\X$ is completely $\LA$-regular, full and separated, then $\ev_{\X}$ is an isomorphism.\label{i:ev:creg}
\item If $\X$ is compact, full and separated, then $\X$ is completely $\LA$-regular.\label{i:ev:compact}
\end{enumerateroman}
\end{fact}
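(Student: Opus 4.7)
The plan is to derive both parts directly from Facts~\ref{fact: ev injective surjective} and~\ref{fact: unit is iso}, which have already encoded separation, fullness, and complete $\LA$-regularity as concrete properties of $\ev_{\X}$. In both cases the strategy is the same: show that $\ev_{\X}$ is a homeomorphism whose set-theoretic inverse also reflects compatibility, so that $\ev_{\X}$ is an isomorphism in $\LSpa$.

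For~\eqref{i:ev:creg}, separation and fullness combine via Fact~\ref{fact: ev injective surjective} to make $\ev_{\X}$ a continuous bijective $\LA$-map, and complete $\LA$-regularity renders the topology on $\X$ initial with respect to $\ev_{\X}$. Since any bijection that carries the initial topology on its domain is automatically a homeomorphism, $\ev_{\X}$ is a homeomorphism. It then remains to check that $\ev_{\X}^{-1}$ reflects compatibility: given $f \in \Comp \X$, the identity $\unit_{\Comp \X}(f) \circ \ev_{\X} = f$ from Fact~\ref{fact: ev} rearranges to $f \circ \ev_{\X}^{-1} = \unit_{\Comp \X}(f)$, and by Fact~\ref{fact: unit is iso} this lies in $\Comp \Spec \Comp \X$, so $\ev_{\X}^{-1}$ is an $\LA$-map.

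For~\eqref{i:ev:compact}, separation and fullness again yield that $\ev_{\X}$ is a continuous bijection. I would then upgrade this to a homeomorphism using the classical fact that a continuous bijection from a compact space to a Hausdorff space is a homeomorphism. The target $\Spec \Comp \X$ is Hausdorff because it embeds into the power $\LA^{\Comp \X}$ of the discrete (hence Hausdorff) algebra $\LA$ (cf.~Remark~\ref{rem: clopen subbasis}, where a clopen subbasis is exhibited). Hence the topology of $\X$ coincides with the initial topology with respect to $\ev_{\X}$, and complete $\LA$-regularity follows from the third clause of Fact~\ref{fact: ev injective surjective}.

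I do not anticipate any substantive obstacle: the preparatory facts do most of the work. The only step deserving explicit attention is the verification in~\eqref{i:ev:creg} that $\ev_{\X}^{-1}$ is an $\LA$-map — not just a homeomorphism — but this falls out of the defining identity of $\ev_{\X}$ combined with the fact that $\unit_{\Comp \X}$ surjects onto $\Comp \Spec \Comp \X$.
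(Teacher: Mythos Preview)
Your proposal is correct and follows essentially the same route as the paper: bijectivity of $\ev_{\X}$ from separation and fullness, homeomorphism via the initial-topology characterization (part~\eqref{i:ev:creg}) or the compact-to-Hausdorff argument (part~\eqref{i:ev:compact}), and the $\LA$-map condition for $\ev_{\X}^{-1}$ from the identity $\unit_{\Comp \X}(f) \circ \ev_{\X} = f$. One small cleanup: the fact that $\unit_{\Comp \X}(f) \in \Comp \Spec \Comp \X$ holds by the very definition of $\Spec$ (namely $\Comp \Spec \A = \unit_{\A}[\A]$), so you do not need to invoke Fact~\ref{fact: unit is iso} or surjectivity of $\unit_{\Comp \X}$ at that step.
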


\begin{proof}
  (\ref{i:ev:creg}): it is straightforward to see that a continuous $\LA$-map $\phi\colon \X \to \Y$ is an isomorphism in the category of $\LA$-spaces if and only if it is a homeomorphism of the underlying spaces such that $f \in \Comp \X$ implies $f \circ \phi^{-1} \in \Comp \Y$. The map $\ev_{\X}$ is a continuous bijective $\LA$-map by Facts~\ref{fact: ev} and~\ref{fact: ev injective surjective}. Moreover, if $f \in \Comp \X$, then $\unit_{\Comp \X}(f) \circ \ev_{\X} = f$ by Fact~\ref{fact: ev}, and so $f \circ \ev_{\X}^{-1} = \unit_{\Comp X}(f) \in \Comp \Spec \Comp \X$. Finally, $\ev_{\X}^{-1}$ is continuous (making $\ev_{\X}$ a homeomorphism) because by Fact~\ref{fact: creg adjunction} each $\LA$-map whose domain is a completely regular $\LA$-space is continuous. Thus, $\ev_{\X}$ is an isomorphism of $\LA$-spaces.

  (\ref{i:ev:compact}): $\Spec \Comp \X$ is separated (Lemma~\ref{lemma: spec is full separated}) and thus Hausdorff (Lemma~\ref{lemma: properties of separated l-spaces}). If $\X$ is compact, then the proof of (\ref{i:ev:creg}) again shows that $\ev_{\X}$ is an isomorphism. Because $\Spec \Comp \X$ is completely $\LA$-regular (Lemma~\ref{lemma: spec is full separated}), and so is~$\X$.
\end{proof}

  We are now ready to prove the most general form of our duality theorem.

\begin{theorem}[Duality theorem for $\LA$-algebras] \label{thm: duality}
  The functors $\Spec$ and $\Comp$ form a dual adjunction between the category $\Alg$ of $\LA$-algebras and the category $\LSpa$ of $\LA$-spaces with unit $\unit$ and counit $\ev$. This dual adjunction restricts to a dual equivalence between $\Alg$ and the full subcategory of completely $\LA$-regular full separated $\LA$-spaces.
\end{theorem}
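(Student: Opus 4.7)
The plan is to build both parts of the statement directly from the unit $\unit$ and the counit $\ev$ that have already been constructed in the preceding facts. For the dual adjunction, I would exhibit the natural bijection
\[
\Alg(\A, \Comp \X) \;\cong\; \LSpa(\X, \Spec \A)
\]
by sending a continuous $\LA$-map $\phi\colon \X \to \Spec \A$ to the homomorphism $\Comp(\phi) \circ \unit_\A\colon \A \to \Comp \X$, and sending a homomorphism $h\colon \A \to \Comp \X$ to $\Spec(h) \circ \ev_\X\colon \X \to \Spec \A$. Verifying that these are mutually inverse reduces, by a direct computation, to the two triangle identities $\Comp(\ev_\X) \circ \unit_{\Comp \X} = \idmap_{\Comp \X}$ and $\Spec(\unit_\A) \circ \ev_{\Spec \A} = \idmap_{\Spec \A}$. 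The first is literally the defining equation in Fact~\ref{fact: ev} applied to each $f \in \Comp \X$. The second can be unpacked pointwise: for $h \in \Spec \A$, both sides evaluate to $h$ once one uses the definitions of $\unit_{\Spec \A}$ and $\ev_{\Spec \A}$. Naturality in $\A$ and $\X$ is then routine from the naturality of $\unit$ and $\ev$.

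An alternative, more conceptual route would be to invoke the Porst--Tholen framework of concrete dualities: the representability Lemmas~\ref{lemma: dualizing is representable} and~\ref{lemma: underlying is representable} verify the required hypotheses with $\LA$/$\LS$ as the dualizing object, and the compatibility between the algebraic and spatial sides of this dualizing object is precisely what Facts~\ref{fact: free on 1}, \ref{fact: spec fa iso ls}, and~\ref{fact: ccomp fs iso la} package. Under this framework the dual adjunction is delivered automatically with $\unit$ and $\ev$ as the canonical unit and counit.

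For the restriction to a dual equivalence, essentially all the work is already done. By Lemma~\ref{lemma: spec is full separated}, $\Spec \A$ is completely $\LA$-regular, full, and separated for every $\A \in \Alg$, so $\Spec$ corestricts to the designated full subcategory of $\LSpa$. On that subcategory, the counit $\ev_\X$ is an isomorphism of $\LA$-spaces by Fact~\ref{fact: ev iso}\eqref{i:ev:creg}, and the unit $\unit_\A$ is an isomorphism of $\LA$-algebras for every $\A \in \Alg$ by Fact~\ref{fact: unit is iso}. Since both unit and counit then become natural isomorphisms, the adjunction restricts to a dual equivalence.

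Because the heavy lifting has been distributed across the previous facts, I do not expect a serious obstacle; the most delicate point will simply be bookkeeping the two triangle identities so as to confirm that the proposed bijection is genuinely inverse in both directions. If one prefers to avoid this, quoting Porst--Tholen sidesteps it entirely and leaves only the identification of the abstract unit and counit with $\unit$ and $\ev$, which falls out of the explicit form of the representability isomorphisms.
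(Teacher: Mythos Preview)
Your proposal is correct overall, and your treatment of the dual equivalence part matches the paper's exactly: Lemma~\ref{lemma: spec is full separated}, Fact~\ref{fact: unit is iso}, and Fact~\ref{fact: ev iso}\eqref{i:ev:creg} are precisely what the paper invokes.

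For the adjunction itself, the paper takes your second route (Porst--Tholen), but with one caveat you should be aware of. The representability lemmas are not quite sufficient on their own: the Porst--Tholen theorem requires two additional ``initial lift'' conditions, which the paper verifies explicitly. These state that (a) a set map $h\colon \A \to \Comp \X$ is a homomorphism as soon as each $a \mapsto h(a)(x)$ is, and (b) a set map $\phi\colon \X \to \Spec \A$ is a continuous $\LA$-map as soon as each $x \mapsto \phi(x)(a)$ is. Both are easy, but they are not what Facts~\ref{fact: spec fa iso ls} and~\ref{fact: ccomp fs iso la} provide; indeed the paper says those two facts ``will not be needed in what follows.'' So your description of what the framework requires is slightly off.

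Your first route, by contrast, is a genuine alternative: a direct verification of the triangle identities bypasses Porst--Tholen entirely and needs none of the above. This is more elementary and self-contained, at the modest cost of having to check naturality of $\unit$ and $\ev$ by hand (routine, as you say). Either route is fine; just be aware that if you quote Porst--Tholen you will need to supply the two lift conditions rather than the facts you listed.
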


\begin{proof}
  The contravariant functors $\abs{\Comp -}$ and $\abs{\Spec -}$ are both representable (Lemma~\ref{lemma: dualizing is representable}), and so are the covariant underlying set functors $\abs{\cdot}$ of the concrete categories $\Alg$ and $\LSpa$ (Lemma~\ref{lemma: underlying is representable}). By \cite[Thm.~1.7 \& footnote 4]{PorstTholen1991}, $\Comp$ and $\Spec$ thus form a dual adjunction with the given unit and counit, provided that they satisfy two additional conditions.

  The first condition states that if $h\colon \A \to \Comp \X$ is a function such that the map $h_{x}\colon a \mapsto h(a)(x)\colon \A \to \LA$ is a homomorphism for every $x \in X$, then $h$ is in fact a homomorphism. But this is true for each function $h\colon \A \to \LA^{X}$.

  The second condition states that if $\phi\colon \X \to \Spec \A$ is a function such that the map $\phi_{a}\colon x \mapsto \phi(x)(a)\colon \X \to \LS$ is a continuous $\LA$-map for each $a \in \A$, then $\phi$ is in fact a continuous $\LA$-map. But this holds because for each basic open $U_{a \mapsto W} \subseteq \Spec \A$ with $a \in \A$ and $W \subseteq \LA$ we have $\phi^{-1}[U_{a \mapsto W}] = \phi_{a}^{-1}[W]$.

  We now prove the second claim of the theorem. $\Spec \A$ for $\A \in \Alg$ is indeed a completely $\LA$-regular full separated $\LA$-space by Lemma~\ref{lemma: spec is full separated}. It remains to prove that $\unit$ and $\ev$ are isomorphisms on the given categories. But $\unit_{\A}$ is always an isomorphism of $\LA$-algebras by Fact~\ref{fact: unit is iso}, and $\ev_{\X}$ is an isomorphism of $\LA$-spaces for each completely $\LA$-regular full separated $\LA$-space $\X$ by Fact~\ref{fact: ev iso}. 
\end{proof}

  The above dual equivalence has two problematic parts that make it difficult to use: complete $\LA$-regularity and fullness. In the next two sections, we shall replace these by more amenable conditions. In the next section, we trade complete $\LA$-regularity for compactness by restricting to a subclass of $\LA$-algebras, so-called canonically finitely valued $\LA$-algebras. Afterwards, to remove the fullness condition, we impose some restrictions on $\LA$, the main one being a congruence distributivity assumption.

\section{\texorpdfstring{Duality for canonically finitely valued $\LA$-algebras}{Duality for canonically finitely valued L-algebras}}
\label{sec: finitely valued}

  We now restrict the spatial side of the duality between $\LA$-algebras and completely $\LA$-regular full separated $\LA$-spaces (Theorem~\ref{thm: duality}) to compact full separated $\LA$-spaces. This requires us to introduce a corresponding subclass of $\Alg$ on the algebraic side: so-called (canonically) finitely valued $\LA$-algebras.

  Recall that a continuous $\LA$-representation of an $\LA$-algebra $\A$ is an embedding $\rho\colon \A \into \Cont(X, \LA)$ where $X$ is a topological space. This is the same thing as an isomorphism $\A \iso \Comp \X$ where $\X$ is an $\LA$-space. Accordingly, it makes sense to talk of full, separated, compact, etc.\ representations.

\subsection{\texorpdfstring{Finitely valued $\LA$-algebras}{Finitely valued L-algebras}}

\begin{definition}
  An $\LA$-valued function $f$ on a set $X$ \emph{has finite range} if its image $f[X] \subseteq \LA$ is finite. The $\LA$-valued functions of finite range on~$X$ form an algebra
\begin{align*}
  \FinRng(X, \LA) \assign \set{f \in \LA^{X}}{f \text{ has finite range}} \leq \LA^{X}.
\end{align*}
\end{definition}

\begin{definition}
  An $\LA$-representation $\A \iso \Comp \X$ of an $\LA$-algebra $\A$ is \emph{finitely valued} if $\Comp \X \leq \FinRng (X, \LA)$. The \emph{canonical $\LA$-representation} of $\A$ is the isomorphism $\unit_{\A}\colon \A \into \Comp \Spec \A$.
\end{definition}

\begin{remark}
  If either $X$ or $\LA$ is finite, then $\FinRng(X, \LA) = \LA^{X}$.
\end{remark}

\begin{fact} \label{fact:compact-finitely-valued}
  If $X$ is a compact space, then $\Cont(X, \LA) \leq \FinRng(X, \LA)$.
\end{fact}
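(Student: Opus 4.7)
The plan is to unfold the definitions and exploit the fact that $\LA$ carries the discrete topology. First I would observe that, since $\LA$ is topologically discrete, every singleton $\{a\} \subseteq L$ is open, so for a continuous $f \colon X \to \LA$ each preimage $f^{-1}[\{a\}]$ is open in $X$. Moreover, these preimages for $a \in f[X]$ form a pairwise disjoint open cover of $X$.

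Next I would invoke compactness of $X$ to extract a finite subcover of this open cover. Because the preimages $f^{-1}[\{a\}]$ are pairwise disjoint for distinct $a$, any subcover must already include $f^{-1}[\{a\}]$ for every $a \in f[X]$ (otherwise some points of $X$ would be left uncovered). Hence $f[X]$ itself must be finite, which is exactly the statement $f \in \FinRng(X, \LA)$. Since this holds for every $f \in \Cont(X, \LA)$, we obtain the inclusion $\Cont(X, \LA) \leq \FinRng(X, \LA)$ as subalgebras of $\LA^{X}$.

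There is essentially no obstacle here: the argument is the standard observation that a continuous map from a compact space into a discrete space has finite image, combined with the definitions already introduced. The only small bookkeeping point is to note that the inclusion holds at the level of subalgebras of $\LA^X$, not merely at the level of underlying sets, but this is immediate since both sides are defined as subalgebras of $\LA^X$ and the containment of underlying sets suffices.
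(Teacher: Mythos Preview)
Your proof is correct and is essentially the same as the paper's, which simply observes that the continuous image of a compact space is compact and that a compact subset of a discrete space is finite. You have unfolded this standard fact into an explicit open-cover argument on $X$ rather than on $f[X]$, but the underlying idea is identical.
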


\begin{proof}
  The image of a compact set under a continuous map is compact, which in a discrete space means finite.
\end{proof}

\begin{remark}
  Each of the finitely many values $a_{1}, \dots, a_{n} \in \LA$ attained by a continuous $\LA$-valued function $f$ on a compact space $X$ is attained on a clopen set. Such functions $f$ are therefore in bijective correspondence with pairs consisting of a decomposition of $X$ into finitely many disjoint non-empty clopens $F_{1}, \dots, F_{n} \subseteq X$ and a corresponding tuple of distinct values $a_{1}, \dots, a_{n} \in \LA$.
\end{remark}

  The constructions $\FinRng(X, \LA)$ for a set $X$ and $\Cont(X, \LA)$ for a compact space $X$ are closely related via the \emph{Stone--\v{C}ech compactification} of $X$. This is the space $\Uf X$ of all ultrafilters on $X$ with a suitable Stone topology. The space $X$ embeds into $\Uf X$ via the map $\iota_{X}\colon X \hookrightarrow \Uf X$ sending each $x \in X$ to the corresponding principal ultrafilter. The universal property of the Stone--\v{C}ech compactification then states that each function $f\colon X \to Y$ into a compact Hausdorff space $Y$ lifts to a unique continuous function $f^{\sharp}\colon \Uf X \to Y$ such that $f = f^{\sharp} \circ \iota_{X}$. Conversely, each continuous function $g\colon \Uf X \to Y$ has a restriction $g^{\flat} \assign g \circ \iota_{X}\colon X \to Y$.
  
  Since any finite subset of $\LA$ is a compact Hausdorff space, we get the following.

\begin{fact} \label{fact: finrng cont iso}
  $\FinRng(X, \LA)$ and $\Cont(\Uf X, \LA)$ are isomorphic for each set $X$ via the maps $f \mapsto f^{\sharp}$ and $g \mapsto g^{\flat}$. Each algebra $\A \leq \FinRng(X, \LA)$ is therefore isomorphic to the algebra $\alg{A}^{\sharp} \assign [\A]^{\sharp} \leq \Cont(\Uf X, \LA)$.
\end{fact}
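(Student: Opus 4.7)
The plan is to read this off directly from the universal property of the Stone--\v{C}ech compactification, using the fact that $\LA$ is topologically discrete throughout.

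\textbf{Step 1: well-definedness of both maps.} For $f \in \FinRng(X,\LA)$, the image $f[X]$ is a finite subset of the discrete space $\LA$, hence compact Hausdorff. Applied to $f\colon X \to f[X]$, the universal property yields a unique continuous $f^{\sharp}\colon \Uf X \to f[X] \subseteq \LA$ with $f = f^{\sharp} \circ \iota_{X}$; this $f^{\sharp}$ lies in $\Cont(\Uf X, \LA)$. Conversely, for $g \in \Cont(\Uf X, \LA)$, the image $g[\Uf X]$ is compact in $\LA$ and hence finite (since $\LA$ is discrete), so $g^{\flat} = g \circ \iota_{X}$ has finite range.

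\textbf{Step 2: mutual inverses.} The equality $(f^{\sharp})^{\flat} = f^{\sharp} \circ \iota_{X} = f$ is immediate from the construction. For $(g^{\flat})^{\sharp} = g$, I will use that $\iota_{X}[X]$ is dense in $\Uf X$ (a standard property of the Stone--\v{C}ech compactification) and that $\LA$ is Hausdorff. Both $g$ and $(g^{\flat})^{\sharp}$ are continuous maps $\Uf X \to \LA$ whose composition with $\iota_{X}$ equals $g^{\flat}$, so by density they coincide on all of $\Uf X$.

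\textbf{Step 3: homomorphism property.} Fix an $n$-ary operation symbol $\omega$ and $f_{1},\dots,f_{n} \in \FinRng(X,\LA)$. I claim that $\omega(f_{1},\dots,f_{n})^{\sharp} = \omega(f_{1}^{\sharp},\dots,f_{n}^{\sharp})$. The right-hand side is continuous because $\omega^{\LA}\colon \LA^{n} \to \LA$ is continuous (being a map from a discrete space) and continuous maps compose. Both sides are continuous maps $\Uf X \to \LA$, and composing each with $\iota_{X}$ yields $\omega(f_{1},\dots,f_{n})$; by the density/Hausdorff argument from Step 2 they are equal. Thus $f \mapsto f^{\sharp}$ is a homomorphism, and combining Steps 1--3 it is an isomorphism $\FinRng(X,\LA) \iso \Cont(\Uf X, \LA)$.

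\textbf{Step 4: restriction to subalgebras.} The final claim is a formal consequence: given $\A \leq \FinRng(X, \LA)$, the isomorphism restricts to an isomorphism $\A \iso [\A]^{\sharp} \leq \Cont(\Uf X, \LA)$. I do not anticipate any serious obstacle here; the only thing worth being careful about is that all uses of the universal property go through compact Hausdorff targets (finite subsets of $\LA$, rather than $\LA$ itself, which need not be compact), and that the uniqueness-on-a-dense-subspace argument is applied to maps into the Hausdorff space $\LA$.
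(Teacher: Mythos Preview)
Your proof is correct and follows exactly the approach the paper intends: the paper states this fact without a formal proof, merely remarking that ``since any finite subset of $\LA$ is a compact Hausdorff space, we get the following'' after recalling the universal property of $\Uf X$. You have spelled out precisely those details---applying the universal property to the finite (hence compact Hausdorff) range, and using density of $\iota_X[X]$ together with Hausdorffness of $\LA$ for uniqueness and the homomorphism identity---which is the standard and expected verification.
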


  The above isomorphism in effect allows us to evaluate $\LA$-valued functions of finite range on $X$ at ultrafilters: for $\U \in \Uf X$ and $I \subseteq \Uf X$ the projection maps
\begin{align*}
  & \pi_{\U}\colon \Cont(\Uf X, \LA) \to \LA, & & \pi_{I}\colon \Cont(\Uf X, \LA) \to \LA^{I}
\end{align*}
  have restrictions to $X$, namely
\begin{align*}
  & \pi^{\flat}_{\U} \assign \pi_{\U} \circ \iota_{X}\colon \FinRng(X, \LA) \to \LA, & & \pi^{\flat}_{I} \assign \pi_{I} \circ \iota_{X} \colon \FinRng(X, \LA) \to \LA^{I}.
\end{align*}
  These restrictions can be described more directly as
\begin{align*}
  & \pi^{\flat}_{\U}\colon f \mapsto a \iff f^{-1} [\{ a \}] \in \U, & & \pi^{\flat}_{I}(f)\colon \U \mapsto \pi_{\U}(f) \text{ for } \U \in I.
\end{align*}
  In an abuse of notation, we shall generally write $\pi_{\U}$ and $\pi_{I}$ for $\pi^{\flat}_{\U}$ and $\pi^{\flat}_{I}$.

\begin{fact} \label{f:equivalence-for-representations}
  The following are equivalent for each $\LA$-algebra $\A$:
\begin{enumerateroman}
\item \label{i:compact-repr}
$\A$ has a compact $\LA$-representation.

\item \label{i:separating-Stone-repr}
$\A$ has a separated Stone $\LA$-representation.

\item \label{i:finitely-valued-repr}
$\A$ has a finitely valued $\LA$-representation.

\item \label{i:separating-finitely-valued-repr}
$\A$ has a separated finitely valued $\LA$-representation.
\end{enumerateroman}
\end{fact}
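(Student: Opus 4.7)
The plan is to prove the cycle $\text{(iv)} \Rightarrow \text{(iii)} \Rightarrow \text{(i)} \Rightarrow \text{(ii)} \Rightarrow \text{(iv)}$. Two of these links are essentially immediate. The implication $\text{(iv)} \Rightarrow \text{(iii)}$ just drops the separation clause. For $\text{(ii)} \Rightarrow \text{(iv)}$, a Stone space is compact, so \cref{fact:compact-finitely-valued} gives $\Cont(X, \LA) \leq \FinRng(X, \LA)$, making any separated Stone representation automatically finitely valued.

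For $\text{(iii)} \Rightarrow \text{(i)}$, I would invoke the Stone--\v{C}ech construction of \cref{fact: finrng cont iso}. Starting from a finitely valued representation $\A \iso \B$ with $\B \leq \FinRng(X, \LA)$, the isomorphism $\FinRng(X, \LA) \iso \Cont(\Uf X, \LA)$ of that fact carries $\B$ to an isomorphic subalgebra $\B^{\sharp} \leq \Cont(\Uf X, \LA)$. Since $\Uf X$ is compact, the $\LA$-space $\langle \Uf X, \B^{\sharp} \rangle$ furnishes a compact representation of $\A$.

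The substantive step is $\text{(i)} \Rightarrow \text{(ii)}$, for which I would pass to the separated quotient from \cref{def: separated quotient}. Given $\A \iso \Comp \X$ with $\X$ a compact $\LA$-space, $\Sep \X$ is a Stone $\LA$-space by \cref{lemma: separated quotient preserves properties}\eqref{i:sep-quot-of-compact-is-Stone}, so it remains to check that the canonical quotient map $f \mapsto f/\theta_\X$ is not merely a surjective homomorphism $\Comp \X \to \Comp \Sep \X$ (which is clear from the definition of $\Comp \Sep \X$) but is also injective. This is straightforward: if $f/\theta_\X = g/\theta_\X$, then $f_x = (f/\theta_\X)(x/\theta_\X) = (g/\theta_\X)(x/\theta_\X) = g_x$ for every $x \in X$, hence $f = g$. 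Composing this isomorphism with $\A \iso \Comp \X$ yields a separated Stone representation of~$\A$.

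I do not anticipate any serious obstacle; the argument is a packaging exercise combining the Stone--\v{C}ech isomorphism of \cref{fact: finrng cont iso} with the separated quotient functor. The only point that merits care is the verification that the separated quotient does not alter the algebra of compatible functions up to isomorphism, but this reduces to the one-line injectivity check above.
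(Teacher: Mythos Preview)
Your proof is correct and takes essentially the same approach as the paper: both rely on the Stone--\v{C}ech isomorphism of \cref{fact: finrng cont iso} and the separated quotient of \cref{lemma: separated quotient preserves properties}. The only difference is organizational: the paper combines these into a single step $\text{(iii)} \Rightarrow \text{(ii)}$ (noting that $\Uf X$ is already Stone, then taking the separated quotient), whereas you split this into $\text{(iii)} \Rightarrow \text{(i)} \Rightarrow \text{(ii)}$; your explicit injectivity check for $\Comp \X \to \Comp \Sep \X$ is a detail the paper leaves implicit.
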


\begin{proof}
  The implications \eqref{i:separating-finitely-valued-repr} $\Rightarrow$ \eqref{i:finitely-valued-repr} and \eqref{i:separating-Stone-repr} $\Rightarrow$ \eqref{i:compact-repr} are trivial. The implications \eqref{i:compact-repr} $\Rightarrow$ \eqref{i:finitely-valued-repr} and \eqref{i:separating-Stone-repr} $\Rightarrow$ \eqref{i:separating-finitely-valued-repr} hold because $\Comp(X, \LA) \leq \FinRng(X, \LA)$.
  Let us prove the implication \eqref{i:finitely-valued-repr} $\Rightarrow$ \eqref{i:separating-Stone-repr}.
  By Fact~\ref{fact: finrng cont iso}, each finitely valued $\LA$-representation yields a Stone $\LA$-representation.
  By \cref{lemma: separated quotient preserves properties}, the separated quotient of this Stone $\LA$-representation is also Stone.
\end{proof}

\begin{definition}
  A \emph{finitely valued $\LA$-algebra} is an $\LA$-algebra that has a finitely valued $\LA$-representation, or, equivalently, a compact $\LA$-representation. In other words, the class of all finitely valued $\LA$-algebras can be defined as $\Algfv \assign \IOp \SOp \PfrOp(\LA)$, where
\begin{align*}
  \PfrOp(\LA) \assign \set{\FinRng(X, \LA)}{X \text{ arbitrary set}},
\end{align*}
  or, equivalently, as $\Algfv \assign \IOp \SOp \PcOp(\LA)$, where
\begin{align*}
  \PcOp(\LA) \assign \set{\Cont(X, \LA)}{X \text{ compact space}}.
\end{align*}
\end{definition}

  Finitely valued $\LA$-algebras are thus defined much like $\LA$-algebras, except that arbitrary powers of $\LA$ are replaced by compact powers.

\begin{remark} \label{rem: singleton representation}
  Each subalgebra $\A \leq \LA$ is finitely valued, as witnessed by its \emph{singleton representation} $\X$ consisting of a singleton space $1$ with $\Comp \X \assign \A^{1}$.
\end{remark}

\begin{fact} \label{f:closure-under-operators}
  The class of all finitely valued $\LA$-algebras is closed under $\IOp$, $\SOp$, and~$\PfinOp$, i.e.\ under isomorphic images, subalgebras, and finite products.
\end{fact}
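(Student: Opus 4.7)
The plan is to verify each of the three closure properties in turn, working directly from the characterization $\Algfv = \IOp \SOp \PcOp(\LA)$.

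Closure under $\IOp$ is immediate from the definition, since ``having a compact $\LA$-representation'' is invariant under isomorphism. For closure under $\SOp$, suppose $\A$ is finitely valued and fix a compact $\LA$-representation $\rho\colon \A \into \Cont(X, \LA)$ with $X$ compact. If $\B \leq \A$, then the restriction $\restrict{\rho}{\B}\colon \B \into \Cont(X, \LA)$ is again an embedding into the algebra of continuous $\LA$-valued functions on the same compact space $X$, so $\B$ is finitely valued.

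The closure under $\PfinOp$ is the step that actually uses a small topological argument. Given finitely many finitely valued $\LA$-algebras $\A_{1}, \dots, \A_{n}$ with compact $\LA$-representations $\rho_{i}\colon \A_{i} \into \Cont(X_{i}, \LA)$, I would form the topological disjoint union $X \assign X_{1} \sqcup \dots \sqcup X_{n}$, which is compact as a finite disjoint union of compact spaces. There is a canonical isomorphism
\begin{align*}
\Cont(X_{1}, \LA) \times \dots \times \Cont(X_{n}, \LA) \iso \Cont(X_{1} \sqcup \dots \sqcup X_{n}, \LA)
\end{align*}
obtained by gluing: a tuple $(f_{1}, \dots, f_{n})$ corresponds to the continuous function on $X$ whose restriction to each $X_{i}$ is $f_{i}$ (continuity at each point holds because each $X_{i}$ is clopen in $X$). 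Composing the product map $\rho_{1} \times \dots \times \rho_{n}$ with this isomorphism yields an embedding $\A_{1} \times \dots \times \A_{n} \into \Cont(X, \LA)$, witnessing that the product is finitely valued.

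There is no real obstacle here; the only point that requires a moment's attention is verifying that the gluing isomorphism of algebras is indeed a homomorphism, which is immediate because all algebraic operations on the product side and on $\Cont(X, \LA)$ are computed pointwise.
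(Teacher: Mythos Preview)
Your proof is correct and follows essentially the same approach as the paper. The only cosmetic difference is that the paper uses the equivalent characterization $\Algfv = \IOp \SOp \PfrOp(\LA)$ and the isomorphism $\FinRng(X_{1}, \LA) \times \dots \times \FinRng(X_{n}, \LA) \iso \FinRng(X_{1} \sqcup \dots \sqcup X_{n}, \LA)$ for the disjoint union of \emph{sets}, whereas you use the compact characterization and the disjoint union of compact spaces; the two arguments are parallel.
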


\begin{proof}
  The closure under $\PfinOp$ holds because $\FinRng(X_{1}, \LA) \times \dots \times \FinRng(X_{n}, \LA) \iso \FinRng(X, \LA)$, where $X$ is the disjoint union of the sets $X_{1}, \dots, X_{n}$.
  The closures under $\IOp$ and $\SOp$ are clear.
\end{proof}

  We say that an algebra $\A$ \emph{lies locally} in a class of algebras $\class{K}$ if each finitely generated subalgebra of $\A$ lies in $\class{K}$. This is equivalent to $\A$ being a directed union of algebras in $\class{K}$, provided that $\class{K}$ is closed under subalgebras.

\begin{fact} \label{fact: locally finitely pointed}
  Each finitely valued $\LA$-algebra $\A$ lies locally in $\IOp \SOp \PfinOp (\LA)$.
\end{fact}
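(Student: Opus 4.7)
The strategy is to factor every finitely generated subalgebra of $\A$ through the finite range of its generators, exhibiting an embedding into a finite direct power of $\LA$.

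First, I would reduce to the case where $\A$ is literally a subalgebra of $\FinRng(X, \LA)$ for some set $X$; this is legitimate by the definition $\Algfv = \IOp \SOp \PfrOp(\LA)$. Fix a finitely generated subalgebra $\B = \Sg^{\A}(b_{1}, \dots, b_{n})$ with generators $b_{i} \in \FinRng(X, \LA)$. The key device is the joint evaluation map
\[
b \colon X \to \LA^{n}, \qquad x \mapsto (b_{1}(x), \dots, b_{n}(x)),
\]
whose image $Y \assign b[X] \subseteq \LA^{n}$ is finite, since $Y \subseteq b_{1}[X] \times \dots \times b_{n}[X]$ and each factor is finite by hypothesis.

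The main observation is that any $c \in \B$ has the form $c = t^{\A}(b_{1}, \dots, b_{n})$ for some $n$-ary term $t$, so pointwise $c(x) = t^{\LA}(b_{1}(x), \dots, b_{n}(x))$; hence $c(x)$ depends only on $b(x)$. One can therefore define $c^{*} \colon Y \to \LA$ by setting $c^{*}(y) \assign c(x)$ for any $x$ with $b(x) = y$. I would then verify that the assignment $c \mapsto c^{*}$ is well-defined, a homomorphism $\B \to \LA^{Y}$, and injective. The first two points are immediate from the pointwise nature of the algebraic operations on function algebras; injectivity is immediate because $c^{*}(b(x)) = c(x)$ recovers $c$ from $c^{*}$.

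Since $Y$ is finite, $\LA^{Y} \in \PfinOp(\LA)$, and the embedding $\B \into \LA^{Y}$ places $\B$ in $\IOp \SOp \PfinOp(\LA)$, as required. The only wrinkles are degenerate cases (the empty generating set, or $X = \emptyset$), which can be handled separately by observing that a subalgebra of $\LA$ itself belongs to $\SOp(\LA) \subseteq \SOp \PfinOp (\LA)$. I do not foresee any substantial obstacle: the argument is a direct application of the term-based description of $\Sg^{\A}(b_{1}, \dots, b_{n})$ combined with the finite-range hypothesis on the generators.
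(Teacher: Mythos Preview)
Your proof is correct and takes essentially the same approach as the paper. The only cosmetic difference is that the paper works with the quotient $X/{\sim}$ where $x \sim y$ iff $b_i(x) = b_i(y)$ for all $i$, whereas you work with the image $Y = b[X] \subseteq \LA^n$; these two finite sets are in canonical bijection via $[x] \mapsto b(x)$, and under this identification the two embeddings of $\B$ coincide.
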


\begin{proof}
  Let $\A \leq \FinRng(X, \LA)$ and let $\B$ be the subalgebra of $\A$ generated by finitely many elements $f_{1}, \dots, f_{n} \in \A$. Each of the functions $f_{i}$ decomposes $X$ into finitely many equivalence classes where $f_{i}$ is constant. Let $\sim$ be the equivalence relation on $X$ such that $x \sim y$ holds if and only if $f_{i}(x) = f_{i}(y)$ for all $i \in \{ 1, \dots, n \}$. Since $\B$ is generated by the function $f_{i}$, we also have that $x \sim y$ holds if and only if $g(x) = g(y)$ for all $g \in \B$. The map
  \begin{align*}
  	\rho \colon \alg{B} & \longinto \LA^{X /{\sim}}\\
  	h & \longmapsto ([x] \mapsto h(x))
  \end{align*}
  is therefore a representation of $\B$ on a finite set, and hence $\B \in \IOp \SOp \PfinOp(\LA)$.
\end{proof}

\begin{example} \label{example: finitely valued mv}
  Finitely valued $\LA$-algebras for $\LA \assign [0, 1]$ (the standard MV-chain) will be called \emph{finitely valued MV-algebras}. Finitely valued $\LA$-algebras for $\LA \assign \PMVchain$ (the standard positive MV-chain) will be called \emph{finitely valued positive MV-algebras}. We shall see later (Fact~\ref{fact: fv = ffv}) that in both of these cases the class of finitely valued $\LA$-algebras coincides with the class of all $\LA$-algebras which are locally in $\IOp \SOp \PfinOp(\LA)$.

  Cignoli and Marra~\cite[Thm.~3.1]{CignoliMarra2012} show that the class of finitely valued MV-algebras coincides with the class of \emph{locally weakly finite} MV-algebras, which are the MV-algebras lying locally in $\IOp \SOp \PfinOp([0, 1])$. We shall prove a universal algebraic generalization of this fact later in Fact~\ref{fact: fv = ffv}. Their MV-algebraic Stone duality is formulated for the class of locally weakly finite MV-algebras.

  They also observe that locally weakly finite MV-algebras are \emph{hyper-Archimedean}: for each $a \in \A$ there is some $n \geq 1$ such that $a^{n} \assign a \odot \cdots \odot a$ ($n$ times) is idempotent, or equivalently for each $a \in \A$ there is some $n \geq 1$ such that $n a \assign a \oplus \cdots \oplus a$ ($n$~times) is idempotent. This is because $[0, 1]$ is hyper-Archimedean and the property is preserved under $\IOp$, $\SOp$, $\PfinOp$ and directed unions. The same argument shows that locally weakly finite positive MV-algebras also satisfy both of these formulations of hyper-Archimedeanicity, although in that case they are no longer equivalent. (However, note that in positive MV-algebras being idempotent with respect to $\odot$ is still equivalent to being idempotent with respect to $\oplus$.) Cignoli and Marra moreover give an example of a hyper-Archimedean MV-algebra which is not finitely valued~\cite[Example~4.4]{CignoliMarra2012}.
\end{example}

\begin{example} \label{example: locally finite mv}
  Finitely valued $\LA$-algebras for $\LA \assign [0, 1]_{\Q}$ (the rational MV-chain) are precisely the locally finite MV-algebras. This was shown by Cignoli, Dubuc and Mundici~\cite[Thm.~5.1]{CignoliDubucMundici2004} in the course of setting up a Stone duality for this class of MV-algebras. We shall again prove a universal algebraic generalization of this fact later in Fact~\ref{fact: fv = ffv}.
\end{example}

\begin{remark}
  The class of finitely valued MV-algebras is not closed under arbitrary products: the standard MV-chain $\MVchain$ is a finitely valued MV-algebra but $\MVchain^{\N}$ is not finitely valued because it is not hyper-Archimedean (consider the function $f\colon k \mapsto \frac{1}{k+1}$). Similarly, it is not closed under ultraproducts: the non-semisimple Chang algebra embeds into an ultraproduct of $[0, 1]$.

  The same holds for the class of locally finite MV-algebras: the rational chain $[0, 1]_{\Q}$ is locally finite, but $([0, 1]_{\Q})^{\N}$ is not (as witnessed by the same function). Similarly, it is not closed under ultraproducts: the finite MV-chains $\Luk_{n}$ for $n \geq 1$ are locally finite MV-algebras but $[0, 1] \in \IOp \SOp \PUOp (\set{\Luk_{n}}{n \geq 1})$ is not.
\end{remark}

\subsection{\texorpdfstring{Canonically finitely valued $\LA$-algebras}{Canonically finitely valued L-algebras}}

  The definition of finitely valued $\LA$-algebras postulates the existence of \emph{some} finitely valued representation. We have seen that, among all representations of an $\LA$-algebra, there is always a canonical one. The following definition therefore naturally suggests itself.

\begin{definition}
  An $\LA$-algebra $\A$ is \emph{canonically finitely valued} if its canonical representation is finitely valued. That is, for every $a \in \A$ the following set is finite:
\begin{align*}
  \set{h(a) \in \LA}{h \in \Alg(\A, \LA)}.
\end{align*}
\end{definition}

  The terminology \emph{fully finitely valued} instead of \emph{canonically finitely valued} would also be justified, according to the next theorem.

\begin{theorem}[Equivalent descriptions of canonically finitely valued $\LA$-algebras] \label{thm: fully finitely valued}
  The following are equivalent for each $\LA$-algebra $\A$:
\begin{enumerateroman}
\item \label{i:ffvc-cfv} $\A$ is canonically finitely valued, i.e.\ the canonical representation of $\A$ is finitely valued.
\item \label{i:ffvc-cc} The canonical representation of $\A$ is compact, i.e. $\Spec \A$ is compact.
\item \label{i:ffvc-ffv} $\A$ has a full finitely valued representation.
\item \label{i:ffvc-fcr} $\A$ has a full compact representation.
\item \label{i:ffvc-efv} Each representation of $\A$ is finitely valued.
\end{enumerateroman}
  Moreover, if $\A$ is finitely generated, then the above conditions are equivalent to:
\begin{enumerateroman}
\setcounter{enumi}{5}
\item \label{i:ffvc-fin} $\Spec \A$ is finite.
\end{enumerateroman}
\end{theorem}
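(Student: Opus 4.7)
The plan is to assemble the equivalences via a short cycle (ii) $\Rightarrow$ (i) $\Rightarrow$ (iii) $\Rightarrow$ (iv) $\Rightarrow$ (ii), handle (i) $\Leftrightarrow$ (v) separately, and treat (vi) as a corollary in the finitely generated case. The implication (ii) $\Rightarrow$ (i) is immediate from Fact~\ref{fact:compact-finitely-valued}: if $\Spec \A$ is compact, then every element of $\Comp \Spec \A$ is automatically of finite range. The converse (i) $\Rightarrow$ (ii), which carries the real content, I would argue topologically: $\Spec \A = \Alg(\A, \LA)$ is always a closed subspace of $\LA^{A}$ (the homomorphism conditions pick out preimages of the diagonal of $\LA \times \LA$ under continuous projection maps, one for each operation and each tuple of arguments), while assumption (i) places $\Spec \A$ inside $\prod_{a \in A} F_{a}$ where $F_{a} \assign \{ h(a) \mid h \in \Spec \A\}$ is finite. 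Since a product of finite discrete spaces is compact by Tychonoff, $\Spec \A$ is closed in a compact space and hence compact.

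For (i) $\Rightarrow$ (iii), the canonical representation $\unit_{\A}$ is full and separated by Lemma~\ref{lemma: spec is full separated} and finitely valued by hypothesis, so it already witnesses (iii). For (iii) $\Rightarrow$ (iv), start with a full finitely valued representation $\A \iso \Comp \X$ and pass through the Stone--\v{C}ech isomorphism of Fact~\ref{fact: finrng cont iso} to obtain $\A \iso \A^{\sharp} \leq \Cont(\Uf X, \LA)$, giving the $\LA$-space $\Y \assign \langle \Uf X, \A^{\sharp} \rangle$, which is compact. The one point requiring care is that fullness transfers: a homomorphism $h \colon \Comp \Y \to \LA$ composed with $a \mapsto a^{\sharp}$ yields a homomorphism $\Comp \X \to \LA$, which by fullness of $\X$ is $\pi_{x}$ for some $x \in X$, and then $h = \pi_{\iota_{X}(x)}$. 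Finally (iv) $\Rightarrow$ (ii) follows because $\ev_{\X} \colon \X \to \Spec \Comp \X \iso \Spec \A$ is surjective by Fact~\ref{fact: ev injective surjective}\eqref{i:ev:creg}--\eqref{i:ev:compact} and continuous, exhibiting $\Spec \A$ as a continuous image of the compact space $\X$.

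For (i) $\Leftrightarrow$ (v): the direction (v) $\Rightarrow$ (i) is trivial since $\unit_{\A}$ is a representation. For (i) $\Rightarrow$ (v), given any embedding $\rho \colon \A \into \LA^{X}$ and any $x \in X$, the composite $\pi_{x} \circ \rho$ lies in $\Alg(\A, \LA)$, so $\rho(a)(x) \in F_{a}$ for every $a$, and each $\rho(a)$ thus has finite range. For the finitely generated addendum (vi), if $\A$ is generated by $a_{1}, \dots, a_{n}$ then every $h \in \Spec \A$ is determined by the tuple $(h(a_{1}), \dots, h(a_{n})) \in F_{a_{1}} \times \cdots \times F_{a_{n}}$, a finite set; conversely a finite space is trivially compact, returning to (ii). The only nonobvious step I foresee is the fullness-preservation check in (iii) $\Rightarrow$ (iv); everything else is either a direct invocation of prior lemmas or a one-line topological argument.
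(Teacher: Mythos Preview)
Your proof is correct and shares the paper's key step: the implication (i) $\Rightarrow$ (ii) via the observation that $\Spec \A$ is closed in $\LA^{A}$ and sits inside the compact product $\prod_{a} F_{a}$, so Tychonoff applies. Your treatment of (i) $\Leftrightarrow$ (v) and of (vi) also matches the paper exactly.

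Where you differ is in the orientation of the cycle among (i)--(iv). The paper runs (ii) $\Rightarrow$ (iv) $\Rightarrow$ (iii) $\Rightarrow$ (i): it gets (iv) $\Rightarrow$ (iii) for free (compact implies finite range), and for (iii) $\Rightarrow$ (i) observes directly that fullness forces $\{h(a) : h \in \Alg(\A,\LA)\} = \{\pi_{x}(\rho(a)) : x \in X\}$, which is finite. You instead run (i) $\Rightarrow$ (iii) $\Rightarrow$ (iv) $\Rightarrow$ (ii), which requires the Stone--\v{C}ech passage and a fullness-transfer check for (iii) $\Rightarrow$ (iv), and then the continuous-image-of-compact argument for (iv) $\Rightarrow$ (ii). Both are valid; the paper's route is slightly shorter because it never has to manufacture a compact representation from a finitely valued one, whereas your route makes explicit that $\Spec \A$ is a quotient of any full compact representation via $\ev_{\X}$. (A small citation slip: surjectivity of $\ev_{\X}$ under fullness is the second item of Fact~\ref{fact: ev injective surjective}, not the items of Fact~\ref{fact: ev iso} you point to.)
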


\begin{proof}	
	\eqref{i:ffvc-efv} $\Rightarrow$ \eqref{i:ffvc-cfv}: trivial. 
	
	\eqref{i:ffvc-cfv} $\Rightarrow$ \eqref{i:ffvc-efv}: in each representation $\rho$ of $\A$ the range of the function $\rho(a) \in \A$ is contained in the finite set $\set{h(a) \in \LA}{h \in \Alg(\A, \LA)}$.
	
	\eqref{i:ffvc-cc} $\Rightarrow$ \eqref{i:ffvc-fcr}: the canonical representation is full by Lemma~\ref{lemma: spec is full separated}. 
	  
	\eqref{i:ffvc-fcr} $\Rightarrow$ \eqref{i:ffvc-ffv}: each compact representation is finitely valued.
	  
	\eqref{i:ffvc-ffv} $\Rightarrow$ \eqref{i:ffvc-cfv}: let $\rho\colon \A \to \Cont(X, \LA)$ be a full finitely valued representation. Then, by fullness, $\set{h(a) \in \LA}{h \in \Alg(\A, \LA)} = \set{\pi_{x}(\rho(a)) \in \LA}{x \in X}$ for each $a \in \alg{A}$, and so $\unit_{\alg{A}}(a)$ is a function of finite range on $\Alg(\alg{A}, \LA)$.
	
	\eqref{i:ffvc-cfv} $\Rightarrow$ \eqref{i:ffvc-cc}: suppose that $\set{h(a) \in \LA}{h \in \Alg(\A, \LA)}$ is finite for each $a \in \A$, and let us prove that $\Spec \A = \Alg(\A, \LA)$ is compact.
	For $a\in \A$, we denote with $\pi_a\colon \LA^A\rightarrow \LA$ the projection onto the $a$-th coordinate (which is a continuous map). 
	The set $\Alg(\A, \LA)$ is the intersection of all sets of the form
	\[
		\{h\in \LA^A \mid \pi_{\tau_{\scriptscriptstyle\A}(a_1, \dots, a_n)}(h)= \tau_{\LA}(\pi_{a_1}(h), \dots, \pi_{a_n}(h))\}
	\]
	for $\tau$ ranging over all function symbols and $a_1, \dots, a_n$ (where $n$ is the arity of $\tau$) ranging over all elements of $\LA$.
	The function $\tau_{\LA} \colon \LA^n \to \LA$ is continuous because $\LA$ has the discrete topology, and so the function from $\LA^A$ to $\LA$ that maps $h$ to $\tau_{\LA}(\pi_{a_1}(h), \dots, \pi_{a_n}(h))$ is continuous. Since $\LA$ is Hausdorff and so has a closed diagonal, the set displayed above is closed.
	Therefore, $\Alg(\A, \LA)$ is a closed subset of $\LA^A$.
	Moreover, $\Alg(\A, \LA)$ is contained in the subset
\begin{align*}
  \prod_{a \in \A} \set{h(a) \in \LA}{h \in \Alg(\A, \LA)}
\end{align*}
   of $\LA^A$, which, being a product of finite discrete and hence compact spaces, is compact by Tychonoff's theorem.
	Therefore, $\Alg(\A, \LA)$ is a closed subset of a compact space, and hence it is compact. (In fact, this proof shows that it is a Stone space because a closed subset of a product of finite discrete spaces is a Stone space.)
	  
	This proves that the conditions \eqref{i:ffvc-ffv}--\eqref{i:ffvc-efv} are equivalent.
	
	Suppose now that $\A$ is finitely generated.

	\eqref{i:ffvc-fin} $\Rightarrow$ \eqref{i:ffvc-cfv}: trivial.
	  
	\eqref{i:ffvc-cfv} $\Rightarrow$ \eqref{i:ffvc-fin}: each homomorphism $h \in \Alg(\A, \LA)$ from an algebra $\alg{A}$ generated by elements $a_{1}, \dots, a_{n} \in \LA$ is uniquely determined by the values $h(a_{1}), \dots, h(a_{n}) \in \LA$. If there are only finitely many values for each $h(a_{i})$ when $h$ ranges over all homomorphisms $h \in \Alg(\A, \LA)$, then the set $\Alg(\alg{A}, \LA)$ is finite.
\end{proof}

\begin{remark}
  The last condition in the above theorem implies the preceding conditions: every $\LA$-algebra $\A$ with a finite spectrum is canonically finitely valued.
\end{remark}

\begin{remark}
  If $\LA$ is finite, then the classes of canonically finitely valued $\LA$-algebras, finitely valued $\LA$-algebras, and all $\LA$-algebras coincide.
\end{remark}

\begin{remark}
  Recall that every $\LA$-algebra $\A$ has a full representation (namely, the canonical representation), but this representation need not be compact. On the other hand, a finitely valued $\LA$-algebra always has a compact representation by \cref{f:equivalence-for-representations}, but this representation need not be full.
\end{remark}

  It may indeed happen that an $\LA$-algebra has some finitely valued $\LA$-representation but its canonical $\LA$-representation is not finitely valued, i.e.\ for some $a \in \alg{A}$ the function $\unit_{\alg{A}}(a)$ on $\Alg(\A, \LA)$ takes infinitely many values. In particular, $\LA$ itself is always a finitely valued $\LA$-algebra but it need not be canonically finitely valued. A trivial example of this occurs when $\LA$ is an infinite algebra in the empty signature. A~more interesting example is the following.

\begin{example} \label{ex: not canonically finite valued}
  Let $\LA$ be the reduct of the standard MV-chain $\MVchain$ in the signature $\{ \oplus, \wedge, \vee, 1, 0 \}$. Then for each real number $r \geq 1$ the truncated multiplication map $\mu_{r}\colon x \mapsto \min(rx, 1)$ is an endomorphism of $\LA$. But for each $a \in (0, 1)$ the set $\set{\mu_{r}(a) \in \MVchain}{r \geq 1}$ has the cardinality of the continuum, so the canonical representation of $\LA$ is not finitely valued. Nonetheless, $\LA$ is of course a finitely valued $\LA$-algebra. The same holds for the reduct of $\MVchain$ in the signature $\{ \odot, \wedge, \vee, 0, 1 \}$, taking instead the endomorphisms $\mu_{r}\colon x \mapsto \max(1 - r(1-x), 0)$.
\end{example}

  In the next section, we shall formulate a sufficient condition which will ensure that finitely valued and canonically finitely valued $\LA$-algebras coincide. It will in particular apply to the case of finitely valued (positive) MV-algebras.

\begin{fact} \label{fact: ffv closure properties}
  The class of all canonically finitely valued $\LA$-algebras is closed under relative homomorphic images (homomorphic images in $\Alg$) and directed unions.
\end{fact}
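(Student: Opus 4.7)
My proof proposal for this fact is straightforward: both closure properties reduce to showing that the relevant finite sets $\{h(a) \in \LA \mid h \in \Alg(-, \LA)\}$ for the target algebra are contained in such sets for the source algebra.

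For closure under relative homomorphic images, suppose $h\colon \A \onto \B$ is a surjective homomorphism in $\Alg$ with $\A$ canonically finitely valued. Fix $b \in \B$ and pick $a \in \A$ with $h(a) = b$, using surjectivity. For every homomorphism $k \in \Alg(\B, \LA)$, the composite $k \circ h \in \Alg(\A, \LA)$ satisfies $(k \circ h)(a) = k(b)$, so
\begin{align*}
  \set{k(b)}{k \in \Alg(\B, \LA)} \subseteq \set{g(a)}{g \in \Alg(\A, \LA)}.
\end{align*}
The right-hand side is finite by hypothesis, hence so is the left-hand side, proving that $\B$ is canonically finitely valued.

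For closure under directed unions, suppose $\A = \bigcup_{i \in I} \A_{i}$ is a directed union of canonically finitely valued $\LA$-subalgebras $\A_{i} \leq \A$ (so in particular $\A \in \Alg$, which is automatic when one reads ``directed union'' as a colimit in $\Alg$ along inclusions). Fix $a \in \A$, and choose $i \in I$ with $a \in \A_{i}$. Every homomorphism $h \in \Alg(\A, \LA)$ restricts to a homomorphism $\restrict{h}{\A_{i}} \in \Alg(\A_{i}, \LA)$ with $\restrict{h}{\A_{i}}(a) = h(a)$, so
\begin{align*}
  \set{h(a)}{h \in \Alg(\A, \LA)} \subseteq \set{g(a)}{g \in \Alg(\A_{i}, \LA)},
\end{align*}
and again the right-hand side is finite by hypothesis.

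I do not expect a significant obstacle: both arguments are one-line set inclusions obtained by pre-composition and restriction of homomorphisms into $\LA$, exploiting directly the definition of canonical finite valuation. The only conceptual point worth flagging is that in the directed-union case one tacitly uses that restrictions to an $\A_{i}$ remain homomorphisms into $\LA$, which is immediate since $\A_{i}$ is a subalgebra of $\A$.
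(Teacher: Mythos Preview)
Your proof is correct. The paper states this fact without proof, presumably because both closures are immediate from the definition; your argument via pre-composition and restriction of homomorphisms into $\LA$ is exactly the natural one the authors would have had in mind.
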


\newcommand{\succf}{\mathop{\mathrm{succ}}}
\newcommand{\zf}{\mathop{\mathrm{z}}}

\begin{example}
  In general, the class of $\LA$-algebras is not closed under subalgebras. Consider the algebra $\LA \coloneqq \Z \cup \{\infty\}$ with the unary successor operation $\succf$, taking $\succf(\infty) \assign \infty$, and the unary operation $\zf$ which is the identity map on $\Z$ and $\zf(\infty) \assign 0$. Then the only homomorphism $h\colon \LA \to \LA$ is the identity map: from $\succf(h(\infty))= f(h(\succf(\infty))) = h(\infty)$ we deduce that $h(\infty) = \infty$ and therefore $h(0) = h(\zf(\infty)) = \zf(h(\infty)) = \zf(\infty) = 0$. But now, using $\succf^{n}$ to denote the $n$-fold iteration of the function, $h(n) = h(\succf^{n}(0)) = \succf^{n}(h(0)) = \succf^{n}(0) = n$ for each $n \geq 1$. Likewise, $h(-n) = -n$ because $0 = h(0) = h(\succf^{n}(-n)) = \succf^{n}(h(-n))$. Therefore, $\LA$ is a canonically finitely valued $\LA$-algebra. However, the subalgebra $\Z$ is not canonically finitely valued, since for every integer $k$ the map $h\colon n \mapsto n+k$ is a homomorphism $h\colon \Z \to \LA$.

	A similar example is provided by taking $\LA$ to be the expansion of the algebra from the Example~\ref{ex: not canonically finite valued} by the constant $\frac{1}{2}$. Then each homomorphism $\LA \to \LA$ is the identity map on elements of the form $\frac{1}{2^{n}}$, and hence on elements of the form $\frac{k}{2^{n}}$. Because such elements are dense in $\LA$, the only homomorphism $\LA \to \LA$ is the identity map. On the other hand, let $\A$ be the subalgebra of $\LA$ with the universe $\{ 0 \} \cup [\frac{1}{2}, 1]$. Then any order-preserving map $f\colon \{ 0 \} \cup [\frac{1}{2}, 1] \to \{ 0 \} \cup [\frac{1}{2}, 1]$ which fixes the elements $0$, $\frac{1}{2}$, $1$ is a homomorphism, since $a \oplus b = 1$ for all $a, b \in \A$, unless $a = 0$ or $b = 0$. This shows again that canonically finitely valued $\LA$-algebras need not be closed under subalgebras.
\end{example}

\begin{example}
	In general, the class of $\LA$-algebras is not closed under products either.
	Let $\LA$ be an infinite set with no function symbol.
	The empty product (i.e.\ the singleton algebra) is not canonically finitely valued.
	
	As a bonus, we show that $\LA$-algebras are not closed under binary products.
	Let $\LA$ be an algebra consisting of an infinite set equipped for each $a \in \LA$ with a unary operation that maps everything to $a$.
	Then $\LA$ is a canonically finitely valued $\LA$-algebra because the unique homomorphism from $\LA$ to $\LA$ is the identity.
	However, $\LA \times \LA$ is not canonically finitely valued: any map $h\colon \LA \times \LA \to \LA$ such that $h(a, a) = a$ for all $a \in \LA$ is a homomorphism.
\end{example}

\subsection{\texorpdfstring{Duality for canonically finitely valued $\LA$-algebras}{Duality for canonically finitely valued L-algebras}}

  The duality theorem for $\LA$-algebras (Theorem~\ref{thm: duality}) now restricts to a duality theorem for canonically finitely valued $\LA$-algebras.

\begin{theorem}[Duality theorem for canonically finitely valued $\LA$-algebras] \label{thm: duality canonically fv}
  The functors $\Spec$ and $\Comp$ form a dual equivalence between canonically finitely valued $\LA$-algebras and compact full separated $\LA$-spaces, with unit $\unit$ and counit $\ev$.
\end{theorem}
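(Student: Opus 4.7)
The plan is to obtain this theorem by restricting the dual equivalence of Theorem~\ref{thm: duality} to the relevant full subcategories on each side, so the bulk of the work has already been done. Since the unit $\unit_\A$ is an isomorphism of $\LA$-algebras for every $\LA$-algebra $\A$ (Fact~\ref{fact: unit is iso}), and the counit $\ev_\X$ is an isomorphism of $\LA$-spaces whenever $\X$ is completely $\LA$-regular, full and separated (Fact~\ref{fact: ev iso}\eqref{i:ev:creg}), it suffices to verify that $\Spec$ and $\Comp$ map the two classes of objects named in the statement into each other.

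First, I would show that $\Spec \A$ is a compact full separated $\LA$-space whenever $\A$ is canonically finitely valued. Fullness and separation of $\Spec \A$ hold for \emph{every} $\LA$-algebra by Lemma~\ref{lemma: spec is full separated}, and compactness of $\Spec \A$ is precisely the content of the equivalence \eqref{i:ffvc-cfv} $\Leftrightarrow$ \eqref{i:ffvc-cc} of Theorem~\ref{thm: fully finitely valued}.

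Second, I would show that $\Comp \X$ is canonically finitely valued whenever $\X$ is a compact full separated $\LA$-space. The key point here is Fact~\ref{fact: ev iso}\eqref{i:ev:compact}, which ensures that such an $\X$ is automatically completely $\LA$-regular; hence by Fact~\ref{fact: ev iso}\eqref{i:ev:creg} the counit $\ev_\X\colon \X \to \Spec \Comp \X$ is an isomorphism of $\LA$-spaces. In particular $\Spec \Comp \X$ is compact, so applying the implication \eqref{i:ffvc-cc} $\Rightarrow$ \eqref{i:ffvc-cfv} of Theorem~\ref{thm: fully finitely valued} to the $\LA$-algebra $\Comp \X$ shows that $\Comp \X$ is canonically finitely valued, as required.

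Combining the two directions, the functors $\Spec$ and $\Comp$ restrict to the stated subcategories, and because $\unit$ and $\ev$ are already componentwise isomorphisms there by the facts cited above, the restriction is a dual equivalence with unit $\unit$ and counit $\ev$. I do not expect any significant obstacle: the whole argument is a bookkeeping exercise that glues together Theorem~\ref{thm: duality}, Fact~\ref{fact: ev iso}, and the characterization of canonically finitely valued $\LA$-algebras given by Theorem~\ref{thm: fully finitely valued}; the only slightly non-trivial ingredient is the automatic complete $\LA$-regularity of compact full separated $\LA$-spaces provided by Fact~\ref{fact: ev iso}\eqref{i:ev:compact}.
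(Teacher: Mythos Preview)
Your proposal is correct and follows essentially the same approach as the paper: restrict the duality of Theorem~\ref{thm: duality} by checking that $\Spec$ and $\Comp$ land in the right subcategories, using Theorem~\ref{thm: fully finitely valued} for the compactness/canonical-finite-valuedness correspondence and Fact~\ref{fact: ev iso}\eqref{i:ev:compact} for automatic complete $\LA$-regularity. The only cosmetic difference is that for the direction showing $\Comp \X$ is canonically finitely valued, the paper invokes Theorem~\ref{thm: fully finitely valued}\eqref{i:ffvc-fcr} directly (since $\X$ itself is a full compact representation of $\Comp \X$), whereas you route through the isomorphism $\ev_\X$ to conclude $\Spec \Comp \X$ is compact and then apply \eqref{i:ffvc-cc}; both arguments are equally valid.
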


\begin{proof}
  By Theorem~\ref{thm: fully finitely valued}, if $\A$ is a canonically finitely valued $\LA$-algebra, then $\Spec \A$ is compact, and if $\X$ is a compact full $\LA$-space, then $\Comp \X$ is canonically finitely valued. If $\X$ is a compact full separated $\LA$-space, then it is completely $\LA$-regular by Fact~\ref{fact: ev iso}.(\ref{i:ev:compact}), and therefore we indeed obtain a restriction of the duality of Theorem~\ref{thm: duality}.
\end{proof}

\section{\texorpdfstring{Duality for finitely valued $\LA$-algebras: congruence distributivity}{Duality for finitely valued L-algebras: congruence distributivity}}
\label{sec: jonsson}

  The duality for canonically finitely valued $\LA$-algebras obtained in the previous section (Theorem~\ref{thm: duality canonically fv}) is still not entirely satisfactory due to the fullness condition. The aim of this section, achieved in Theorem~\ref{thm: cd duality}, is to make this condition disappear by imposing some restrictions on $\LA$.

\subsection{The J\'{o}nsson property}

  The proof of our main result involves what we call the J\'{o}nsson property, which comes in three equivalent versions. After proving their equivalence, we shall refer to them indiscriminately as ``the J\'{o}nsson property''.

  We shall say that a homomorphism $h\colon \A \to \LA$ \emph{factors through} a homomorphism $g\colon \A \to \B$ if $h = f \circ g$ for some homomorphism $f\colon g[\A] \to \LA$. (Note that the domain of~$f$ is $g[\A]$, not $\B$.) This happens if and only if $\ker g \leq \ker h$.

\begin{definition}
\label{d:J-finite-covers}
  An $\LA$-algebra $\A$ is said to have the \emph{J\'{o}nsson property for finite covers} if for each finitely valued $\LA$-representation $\X$ of $\A$ and each finite cover $X_{1}, \dots, X_{n}$ of $X$, every homomorphism $h\colon \Comp \X \to \LA$ factors through some~$\pi_{X_{i}}$. By a \emph{finite cover} of a set $X$ we mean a finite family of sets $X_{1}, \dots, X_{n}$ such that $X_{1} \cup \dots \cup X_{n} = X$. Note that we allow for the case $n = 0$ here: the empty family of sets covers the empty set.
\end{definition}

\begin{definition}
\label{d:J-compact}
  An $\LA$-algebra $\A$ has the \emph{compact J\'{o}nsson property} if for each compact $\LA$-representation $\X$ of $\A$, every homomorphism $h\colon \Comp \X \to \LA$ factors through $\pi_{x}$ for some $x \in \X$.
\end{definition}

\begin{definition}
\label{d:J-ultrafilter}
  An $\LA$-algebra $\A$ has the \emph{ultrafilter J\'{o}nsson property} if for each finitely valued $\LA$-representation $\X$ of $\A$, each homomorphism ${h\colon \Comp \X \to \LA}$ factors through $\pi_{\U}$ for some ultrafilter $\U$ on~$X$.
\end{definition}

  This last condition calls for some explanation. Recall from the discussion surrounding Fact~\ref{fact: finrng cont iso} that each finitely valued function $f\colon X \to \LA$ extends to a continuous function $f^{\sharp}\colon \Uf X \to \LA$. What we mean by $h$ factoring through some $\pi_{\U}$ is that there is some ultrafilter $\U$ on $X$ such that $h(f) = \pi_{U}(f^{\sharp})$ for all $ f\in \Comp \X$.

\begin{remark} \label{remark: empty jonsson}
  The empty $\LA$-algebra, if it exists, fails to satisfy any version of the J\'{o}nsson property: consider its representation over an empty space. If the singleton $\LA$-algebra is a subalgebra of $\LA$, then also this algebra fails to satisfy any version of the J\'{o}nsson property: consider again its representation over an empty space. However, if the singleton $\LA$-algebra is not a subalgebra of $\LA$, then it vacuously satisfies all versions of the J\'{o}nsson property, since it has no homomorphism into $\LA$.
\end{remark}

\begin{notation}
  Given functions $f, g \in \Cont(X, \LA)$, we introduce the notation
\begin{align*}
  \equalizer{f}{g} \assign \set{x \in X}{f_{x} = g_{x}}.
\end{align*}
  This set is always clopen because $\LA$ is discrete and $\equalizer{f}{g} = \pair{f}{g}^{-1}[\Delta_{\LA}]$, where $\Delta_{\LA} \subseteq \LA \times \LA$ is the (clopen) equality relation on $\LA$.
\end{notation}

\begin{theorem}[Equivalence between J\'{o}nsson properties] \label{thm: jonsson equivalence}
  The following are equivalent for each $\LA$-algebra $\A$:
\begin{enumerateroman}
\item \label{i:Jp-fv}
$\A$ has the J\'{o}nsson property for finite covers.
\item \label{i:Jp-c}
$\A$ has the compact J\'{o}nsson property.
\item\label{i:Jp-u}
$\A$ has the ultrafilter J\'{o}nsson property.
\end{enumerateroman}
\end{theorem}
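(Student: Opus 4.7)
The plan is to prove (i) $\Leftrightarrow$ (iii) directly via an ultrafilter-lemma argument, and to bridge (ii) and (iii) via the Stone--\v{C}ech identification of Fact~\ref{fact: finrng cont iso}. The key reformulation throughout is that, for an ultrafilter $\U$ on $X$, the statement ``$h$ factors through $\pi^\flat_\U$'' is equivalent to saying that every $Y \in \U$ satisfies $\ker \pi_Y \leq \ker h$. This holds because $\pi^\flat_\U(f) = \pi^\flat_\U(g)$ precisely when $\equalizer{f}{g} \in \U$, and $\pi^\flat_\U$ surjects onto its image (a subalgebra of $\LA$), so the factoring map is automatically a homomorphism.

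For (iii) $\Rightarrow$ (i), given a finite cover $X_1, \dots, X_n$ of $X$ and a homomorphism $h$, I apply (iii) to obtain an ultrafilter $\U$ with the property that every $Y \in \U$ satisfies $\ker \pi_Y \leq \ker h$. Some $X_i$ lies in $\U$ (since the $X_j$'s cover $X$ and $\U$ is an ultrafilter), yielding the factorization through $\pi_{X_i}$. For (i) $\Rightarrow$ (iii), I need to produce an ultrafilter consisting entirely of ``good'' sets $Y$ (those with $\ker \pi_Y \leq \ker h$). Using that $X$ is good and that by (i) no finite family of bad sets can cover $X$, the collection $\set{X \setminus J}{J \subseteq X \text{ bad}}$ has the finite intersection property (a finite intersection vanishing would force the $J$'s to form a cover, contradicting (i)). Any ultrafilter extending this collection, obtained from the ultrafilter lemma, consists entirely of good sets. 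The degenerate case $X = \emptyset$ is addressed by Remark~\ref{remark: empty jonsson}.

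For (ii) $\Leftrightarrow$ (iii): via Fact~\ref{fact: finrng cont iso}, any finitely valued representation $\X$ over $X$ yields a compact representation $\X^\sharp$ over $\Uf X$, with point evaluations $\pi_\U$ on $\Comp \X^\sharp$ corresponding exactly to $\pi^\flat_\U$ on $\Comp \X$ under the isomorphism $f \mapsto f^\sharp$. Applying (ii) to $\X^\sharp$ then delivers (iii). Conversely, a compact representation is finitely valued by Fact~\ref{fact:compact-finitely-valued}, so (iii) supplies an ultrafilter $\U$ on the compact space $X$; every ultrafilter on a compact space converges to some $x \in X$, and since $\LA$ is discrete each $f \in \Comp \X$ is locally constant, so $f^{-1}[\{f(x)\}]$ is a neighborhood of $x$ lying in $\U$, whence $\pi^\flat_\U = \pi_x$ on $\Comp \X$ and $h$ factors through $\pi_x$. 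I expect the main technical obstacle to be the (i) $\Rightarrow$ (iii) direction, where the reformulation in terms of good vs.\ bad sets is what makes the ultrafilter-lemma argument work; the remaining implications are then essentially bookkeeping, translating between $X$ and its Stone--\v{C}ech compactification.
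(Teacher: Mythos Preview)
Your proposal is correct and follows essentially the same route as the paper. The (i)~$\Leftrightarrow$~(iii) argument is identical up to dualizing ideals to filters (the paper works with the ideal generated by the bad sets, you with the filter generated by their complements), and the (ii)~$\Leftrightarrow$~(iii) argument via the Stone--\v{C}ech identification of Fact~\ref{fact: finrng cont iso} and convergence of ultrafilters on compact spaces matches the paper's; your observation that in fact $\pi^\flat_\U = \pi_x$ on $\Comp \X$ when $\U$ converges to $x$ is a slight sharpening of the paper's $\ker \pi_x \leq \ker h$, and your explicit reformulation of ``$h$ factors through $\pi^\flat_\U$'' as ``every $Y \in \U$ is good'' is a clarifying lemma the paper leaves implicit.
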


\begin{proof}
  \eqref{i:Jp-fv} $\Rightarrow$ \eqref{i:Jp-u}: suppose that $\A$ has the J\'{o}nsson property for finite covers. 
  Let $\A \leq \FinRng(X, \LA)$ be a finitely valued $\LA$-representation of $\A$, and let ${h\colon \A \to \LA}$ be a homomorphism.
  Let $\I$ be the set of all $I \subseteq X$ such that $h$ does not factor through $\pi_{I}$.
  For all $I_1, \dots, I_n \in \I$ we have $I_1 \cup \dots \cup I_n \neq X$ because otherwise, by the J\'{o}nsson property for finite covers, there would be $i \in \{1, \dots, n\}$ such that $h$ factors through $\pi_{I_i}$, contradicting $I_i \in \I$.
  Therefore, the ideal generated by $\I$ is proper.
  By the Ultrafilter Lemma, $\I$ is included in a prime ideal, whose complement is an ultrafilter $\U$.
  Since $\U$ is disjoint from $\I$, using the definition of $\I$ we get that $I \in \U$ implies that $h$ factors through $\pi_{I}$.
  
  \eqref{i:Jp-u} $\Rightarrow$ \eqref{i:Jp-fv}: suppose that $\A$ has the ultrafilter J\'{o}nsson property. 
  Let $\A \leq \FinRng(X, \LA)$ be a finitely valued $\LA$-representation of $\A$, let ${h\colon \A \to \LA}$ be a homomorphism and let $X_{1}, \dots, X_{n}$ be a finite cover of $X$.
  By the ultrafilter J\'{o}nsson property, $h$ factors through $\pi_{\U}$ for some $\U \in \Uf X$. Since $X_{1} \cup \dots \cup X_{n} = X$ and $\U$ is an ultrafilter, $X_{i} \in \U$ for some $i \in \{ 1, \dots, n \}$, so $\pi_{\U}$ factors through~$\pi_{X_{i}}$, and hence $h$ factors through $\pi_{X_i}$.

  \eqref{i:Jp-u} $\Rightarrow$ \eqref{i:Jp-c}: suppose that $\A$ has the ultrafilter J\'{o}nsson property.
 Let $\A \leq \Cont(X, \LA)$ be a compact $\LA$-representation of $\A$.
 By the ultrafilter J\'{o}nsson property $h$ factors through $\pi_{\U}$ for some ultrafilter $\U$ on $X$. Because $X$ is compact, $\U$ converges to some $x \in X$. That is, $x \in U$ implies $U \in \U$ for each open $U$.
  We claim that $h$ factors through $\pi_{x}$, i.e.\ that $\ker \pi_{x} \leq \ker h$. Suppose therefore that, for $f, g \in \A$, we have $\pair{f}{g} \in \ker \pi_{x}$, i.e.\ $f_{x} = g_{x}$. Taking $a \assign f_{x} = g_{x}$, the set $U \assign f^{-1} [\{ a \}] \cap g^{-1} [\{ a \}]$ is an open neighborhood of $x$, so $U \in \U$ and thus $\pi_{\U}$ factors through $\pi_{U}$. But $\pi_{U}(f) = \pi_{U}(g)$, so $\pi_{\U}(f) = \pi_{\U}(g)$, and so $h(f) = h(g)$.

  \eqref{i:Jp-c} $\Rightarrow$ \eqref{i:Jp-u}: suppose that $\A$ has the compact J\'{o}nsson property. 
  Let $\A \leq \FinRng(X, \LA)$ be a finitely valued $\LA$-representation of $\A$, and let ${h\colon \A \to \LA}$ be a homomorphism.
  Given the isomorphism between $\FinRng(X, \LA)$ and $\Cont(\Uf X, \LA)$ (Fact~\ref{fact: finrng cont iso}), the finitely valued representation $\A \leq \FinRng(X, \LA)$ induces a compact representation $\A \leq \Cont(\Uf X, \LA)$.
  By the compact J\'{o}nsson property applied to $\A \leq \Cont(\Uf X, \LA)$, the homomorphism $h \colon \A \to \LA$ factors through the projection $\pi_{\U}$ for some $\U \in \Uf X$, and thus $h$ factors through $\pi_{\U}\colon \FinRng(X, \LA) \to \LA$.
\end{proof}

\subsection{Relative congruence distributivity and the J\'{o}nsson property}

  To show that finitely valued $\LA$-algebras enjoy the J\'{o}nsson property, we will combine two assumptions: a lack of nontrivial partial endomorphisms of $\LA$ and the relative congruence distributivity of finitely valued $\LA$-algebras.

\begin{definition}
  A \emph{partial endomorphism of $\LA$} is a homomorphism $h\colon \A \to \LA$ where $\A \leq \LA$. We say that \emph{$\LA$ has only trivial partial endomorphisms} if each partial endomorphism of $\LA$ is an inclusion.
\end{definition}

\begin{remark} \label{rem: singleton}
	If $\LA$ is nontrivial and only has trivial partial endomorphisms, then it does not have any singleton subalgebras. Indeed, if it had one, say $\{a\}$, then the constant function $\LA \to \{a\}$ would be a homomorphism. This homomorphism would be an inclusion, since $\LA$ has only trivial partial endomorphisms, but this would contradict the nontriviality of $\LA$.
	Recall also that $\LA$ has the empty algebra as a subalgebra if and only if the signature contains no constant symbols.
\end{remark}

\begin{fact} \label{fact: no endomorphisms implies fsi}
  If $\LA$ has only trivial partial endomorphisms, then the only relative congruences on any subalgebra $\A$ of $\LA$ are the identity congruence $\Delta_\A$ and the total congruence $\nabla_\A$.
\end{fact}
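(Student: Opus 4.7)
The plan is to unpack what it means for $\theta$ to be a relative congruence on $\A \leq \LA$ with respect to the prevariety $\Alg = \IOp\SOp\POp(\LA)$: it means $\A/\theta \in \Alg$, so we have an embedding $\rho \colon \A/\theta \into \LA^{X}$ for some set $X$. From this embedding we will extract partial endomorphisms of $\LA$, and then invoke triviality of partial endomorphisms to force $\theta$ to be either $\Delta_{\A}$ or $\nabla_{\A}$.

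In more detail: let $q_{\theta}\colon \A \onto \A/\theta$ denote the quotient map and, for each $x \in X$, consider the homomorphism
\[
  h_{x} \assign \pi_{x} \circ \rho \circ q_{\theta}\colon \A \longrightarrow \LA.
\]
Since $\A \leq \LA$, each $h_{x}$ is a partial endomorphism of $\LA$, and by hypothesis it must be the inclusion $\A \into \LA$. Hence $\ker h_{x} = \Delta_{\A}$. The kernel $\ker h_{x}$ contains $\theta$ (because $h_{x}$ factors through $q_{\theta}$), so as soon as even a single $x \in X$ exists we obtain $\theta \subseteq \Delta_{\A}$, and thus $\theta = \Delta_{\A}$.

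The remaining case is $X = \emptyset$. Then $\A/\theta$ embeds into the singleton algebra $\LA^{\emptyset}$ (if such a singleton algebra exists in the signature) or into the empty algebra (if the signature is constant-free), which forces $\abs{\A/\theta} \leq 1$ and hence $\theta = \nabla_{\A}$. Combining the two cases yields $\theta \in \{ \Delta_{\A}, \nabla_{\A} \}$.

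The only delicate point is making sure the case analysis on $X$ is clean; there is no real obstacle here, because the hypothesis ``trivial partial endomorphisms'' immediately collapses the non-trivial case. Note that this argument uses nothing beyond the definition of $\IOp\SOp\POp(\LA)$ and the factorization property $\ker(h \circ q_{\theta}) \supseteq \theta$, so it goes through uniformly for all subalgebras of $\LA$, including the empty and singleton ones handled by Remark~\ref{rem: singleton}.
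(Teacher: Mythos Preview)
Your proof is correct and follows essentially the same approach as the paper: embed $\A/\theta$ into a power of $\LA$, use each coordinate to obtain a partial endomorphism of $\LA$, conclude that these are inclusions, and split into the cases of empty versus non-empty indexing set. The paper phrases this via a subdirect decomposition $\theta = \bigcap_{i \in I} \theta_i$ with each $\A/\theta_i \in \IOp\SOp(\LA)$, but this is exactly your family $(\ker h_x)_{x \in X}$ under different clothing. One minor quibble: in the $X = \emptyset$ case, $\LA^{\emptyset}$ is always the singleton algebra regardless of whether the signature has constants, so your parenthetical about the empty algebra is unnecessary (though harmless).
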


\begin{proof}
  Let $\A$ be a subalgebra of $\LA$.
  
  The trivial and the total congruence on an algebra in a prevariety are always relative congruences with respect to the prevariety. This settles one inclusion.
  
   Let $\theta$ be a relative congruence on $\A$.
  Since $\theta$ is a relative congruence, $\alg{A}/\theta$ is an $\LA$-algebra and hence a subdirect product of subalgebras of $\LA$.
  Therefore, there is a family $(\theta_{i})_{i \in I}$ of relative congruences on $\A$ such that $\alg{A} / \psi_{j} \in \IOp \SOp (\LA)$ and $\theta = \bigcap_{i \in I} \theta_i$.
  Since $\alg{A}$ is a subalgebra of $\LA$, the assumption that $\LA$ has only trivial partial endomorphisms yields that $\theta_{i} = \Delta_{\A}$ for each $i \in I$.
  Therefore, either $\theta = \Delta_\A$ (if $I \neq \varnothing$) or $\theta = \nabla_\A$ (if $I = \varnothing$).
\end{proof}

  The following is a useful sufficient condition that ensures that $\LA$ has only trivial partial endomorphisms.

\begin{lemma}
	Suppose that $\LA$ has a bounded lattice reduct $\langle L; \wedge, \vee, 1, 0 \rangle$ and for all $a,b \in \LA$ with $a \nleq b$ there is a unary term $t(x)$ such that $t^{\LA}(a) = 1$ and $t^{\LA}(b) = 0$. Then $\LA$ has only trivial partial endomorphisms.
\end{lemma}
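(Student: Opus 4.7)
The plan is to take an arbitrary partial endomorphism $h\colon \A \to \LA$ (so $\A \le \LA$), and show $h$ is the inclusion map by a direct separation argument using the hypothesized unary terms. Write $a \in \A$; I want to show $h(a) = a$.

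First I would dispose of the trivial case: if $\LA$ has only one element (so $0 = 1$ in $\LA$), then every subalgebra equals $\LA$ and the only homomorphism is the identity, so $h$ is an inclusion. Henceforth assume $0 \neq 1$ in $\LA$. Suppose for contradiction that $h(a) \neq a$ for some $a \in \A$. Because $\LA$ has a lattice reduct preserved by $h$, the equality $h(a) = a$ is equivalent to the conjunction $a \le h(a)$ and $h(a) \le a$, so one of these inequalities fails.

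Suppose first that $a \nleq h(a)$. By the hypothesis, choose a unary term $t(x)$ in the signature of $\LA$ with $t^{\LA}(a) = 1$ and $t^{\LA}(h(a)) = 0$. Since $\A \le \LA$, the element $t^{\LA}(a)$ lies in $\A$, and since $h$ is a homomorphism preserving the constants $0, 1$,
\begin{align*}
  1 \;=\; h(1) \;=\; h(t^{\LA}(a)) \;=\; t^{\LA}(h(a)) \;=\; 0,
\end{align*}
contradicting $0 \neq 1$. The case $h(a) \nleq a$ is symmetric: pick $t$ with $t^{\LA}(h(a)) = 1$ and $t^{\LA}(a) = 0$, then $0 = h(0) = h(t^{\LA}(a)) = t^{\LA}(h(a)) = 1$, again a contradiction.

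The only real subtlety is confirming that $h$ commutes with the unary term $t$ on elements of $\A$, which is immediate once one notes that $t^{\LA}(a) = t^{\A}(a)$ because $\A$ is a subalgebra; everything else is a routine consequence of $h$ preserving the bounded-lattice constants. So no step is particularly hard; the entire content of the argument is the two-line separation/contradiction above applied to whichever inequality fails.
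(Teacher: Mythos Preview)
Your proof is correct and follows essentially the same approach as the paper: assume $h(a)\neq a$, pick the failing inequality, apply the hypothesized unary term, and derive $1=0$. The only cosmetic difference is that you explicitly dispose of the singleton case $0=1$ up front, whereas the paper instead derives $1=0$ and then observes that $h(a)\le 1=0\le a$ contradicts $h(a)\nleq a$; both routes are trivially equivalent.
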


\begin{proof}
	Let $\A$ be a subalgebra of $\LA$ and $h \colon \A \to \LA$ a homomorphism.
	We claim that $h(a) = a$.
	By way of contradiction, suppose this is not the case.
	Then either $h(a) \nleq a$ or $a \nleq h(a)$.
	Let us consider the case where $h(a) \nleq a$.
	Then $t^{\LA}(h(a)) = 1$ and $t^{\LA}(a) = 0$ for some unary term $t$, and so $1 = t^{\LA}(h(a)) = h(t^{\A}(a)) = h(t^{\LA}(a)) = h(0) = 0$.
	But then $h(a) \leq 1 = 0 \leq a$, which contradicts the assumption that $h(a) \nleq a$.	
	The case $a \nleq h(a)$ is analogous.
\end{proof}

\begin{example} \label{ex: pmv chain has no partial endomorphisms}
  Such term functions exist in the standard positive MV-chain $\PMVchain$, and therefore also in the standard MV-chain $\MVchain$. Consequently, $\PMVchain$ and $\MVchain$ have only trivial partial endomorphisms.
\end{example}

\begin{proof}
  Consider $a \nleq b$ in $\PMVchain$. We define a sequence $f_{i}$ of term functions on $\PMVchain$ for $i \in \N$ and use it to determine sequences $a_{i}, b_{i} \in \PMVchain$ for $i \in \N$ as $a_{i+1} \assign f_{i}(a_{i})$ and $b_{i+1} = f_{i}(b_{i})$, with $a_{0} \assign a$ and $b_{0} \assign b$. If $a_{i}, b_{i} \leq \frac{1}{2}$, we take $f_{i}(x) \assign x \oplus x$. If $a_{i}, b_{i} > \frac{1}{2}$, we take $f_{i}(x) \assign x \odot x$. If $b_{i} \leq \frac{1}{2} \leq a_{i}$, we take $f_{i}(x) \assign x \oplus x$. Observe that $a_{i} \nleq b_{i}$ for each $i \in \N$, so the above three cases are mutually exclusive and no other cases can arise. In each step either $a_{i+1} - b_{i+1} = 2 (a_{i} - b_{i})$ or $a_{i+1} = 1$ and $b_{i+1} < 1$. Since the distance can only double finitely many times while remaining within $\PMVchain$, eventually we reach some $k \in \N$ where $a_{k} = 1$ and $b_{k} < 1$. But then there is some $n$ such that $(a_{k})^{n} = 0$. Taking $g(x) \assign x^{n}$, we obtain a term function $h \assign g \circ f_{k} \circ \dots \circ f_{0}$ such that $h(a) = 1$ and $h(b) = 0$.
\end{proof}

\begin{definition}
  Given a prevariety $\class{K}$, a \emph{$\class{K}$-relative} congruence on an algebra $\A$ is a congruence $\theta$ on $\alg{A}$ such that $\alg{A} / \theta \in \class{K}$. The $\class{K}$-relative congruences on $\alg{A}$ form a complete lattice $\Con_{\class{K}} \A$, where arbitrary meets are intersections. An algebra $\alg{A}$ is \emph{$\class{K}$-relatively congruence distributive} if $\Con_{\class{K}} \A$ is a distributive lattice.
\end{definition}

\begin{lemma} \label{lemma: rcd implies jonsson}
  Let $\LA$ be a nontrivial algebra with only trivial partial endomorphisms and without the empty subalgebra. Let $\class{K}$ be a prevariety containing $\LA$. Then each $\class{K}$-relatively congruence distributive finitely valued $\LA$-algebra has the J\'{o}nsson property.
\end{lemma}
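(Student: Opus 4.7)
The plan is to prove the J\'{o}nsson property for finite covers, which by \Cref{thm: jonsson equivalence} is equivalent to the other two formulations. I would fix a finitely valued representation $\A \leq \FinRng(X, \LA)$, a homomorphism $h\colon \A \to \LA$, and a finite cover $X_1, \dots, X_n$ of $X$, and aim to show that $\ker \pi_{X_i} \subseteq \ker h$ for some $i$. Setting $\theta \assign \ker h$ and $\theta_i \assign \ker \pi_{X_i}$, I first note that $\A/\theta$ embeds into $\LA$ via $h$ and each $\A/\theta_i$ embeds into $\FinRng(X_i, \LA) \in \Alg$, so all these kernels lie in $\Con_\Alg(\A) \subseteq \Con_\class{K}(\A)$, with $\bigcap_{i=1}^n \theta_i = \Delta_\A$. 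The degenerate case $n = 0$ is vacuous: then $X = \emptyset$, so $\A$ is a singleton algebra, but $\LA$ has no singleton subalgebras by \Cref{rem: singleton}, so no such $h$ can exist.

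For $n \geq 1$, the main tool is finite distributivity in the distributive lattice $\Con_\class{K}(\A)$, which yields
\[
\theta \;=\; \theta \vee \bigcap_{i=1}^n \theta_i \;=\; \bigcap_{i=1}^n (\theta \vee \theta_i),
\]
with joins taken in $\Con_\class{K}(\A)$. It will then suffice to show that $\theta$ is meet-irreducible (equivalently, meet-prime, by distributivity) in $\Con_\class{K}(\A)$, for then some factor $\theta \vee \theta_i$ equals $\theta$, giving $\theta_i \subseteq \theta$. By the correspondence theorem for $\class{K}$-relative congruences, this meet-irreducibility of $\theta$ is equivalent to meet-irreducibility of $\Delta_\B$ in $\Con_\class{K}(\B)$, where $\B \assign \A/\theta$ is, via the embedding induced by $h$, a subalgebra of $\LA$.

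The hard part will be this final meet-irreducibility claim: showing that $\Delta_\B$ is meet-irreducible in $\Con_\class{K}(\B)$ whenever $\B \leq \LA$. The strategy is to leverage the trivial-partial-endomorphism hypothesis through the observation that any homomorphism from $\B$ into $\LA$ must coincide with the inclusion $\B \hookrightarrow \LA$; hence, for any congruence $\psi$ on $\B$ admitting a homomorphism $\B/\psi \hookrightarrow \LA$, the composite $\B \twoheadrightarrow \B/\psi \hookrightarrow \LA$ is the injective inclusion, forcing $\psi = \Delta_\B$. Extending \Cref{fact: no endomorphisms implies fsi}, which handles the $\Alg$-relative case, to the $\class{K}$-relative setting via a careful analysis of the subdirect embedding $\B \hookrightarrow \B/\alpha \times \B/\beta$ arising from a putative decomposition $\Delta_\B = \alpha \cap \beta$ with $\alpha, \beta \in \Con_\class{K}(\B)$ nontrivial is the technical crux. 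Concretely, I expect that the existence of the inclusion $\B \hookrightarrow \LA$ together with the subdirect decomposition forces a homomorphism of one of the factors $\B/\alpha$ or $\B/\beta$ into $\LA$, which by the observation above forces the corresponding $\alpha$ or $\beta$ to equal $\Delta_\B$.
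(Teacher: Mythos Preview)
Your overall strategy---verify the J\'{o}nsson property for finite covers, observe that $\ker h$ and each $\ker\pi_{X_i}$ lie in $\Con_{\class{K}}(\A)$ with $\bigcap_i\ker\pi_{X_i}=\Delta_{\A}$, and use distributivity of $\Con_{\class{K}}(\A)$ to reduce the problem to finite meet-primality of $\ker h$---matches the paper's exactly, as does your handling of the degenerate case $n=0$ via \Cref{rem: singleton}.

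Where you depart is in what you label the ``hard part.'' The paper does not pursue your subdirect-decomposition analysis at all; it simply invokes \Cref{fact: no endomorphisms implies fsi} to conclude that $h[\A]\leq\LA$ has at most two relative congruences, so that $\ker h$ is either the top element of $\Con_{\class{K}}(\A)$ or a coatom there. A coatom is finitely meet-irreducible, hence (by distributivity) finitely meet-prime, which immediately gives $\ker\pi_{X_i}\leq\ker h$ for some $i$; the top case is dispatched trivially when $n\geq 1$. Your expectation that a decomposition $\Delta_{\B}=\alpha\cap\beta$ with $\alpha,\beta\in\Con_{\class{K}}(\B)$ and the resulting subdirect embedding $\B\hookrightarrow\B/\alpha\times\B/\beta$ would ``force a homomorphism of one of the factors $\B/\alpha$ or $\B/\beta$ into $\LA$'' is neither justified nor the mechanism the paper uses: there is no general reason such a decomposition should produce a map of a factor into $\LA$, and the paper makes no claim of this kind. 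In short, you have correctly isolated the one nontrivial step, but your proposed route through it is speculative; the paper's route is to cite \Cref{fact: no endomorphisms implies fsi} directly for the coatom claim.
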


\begin{proof}
  If $\LA \in \class{K}$, then $\class{K}$ contains all $\LA$-algebras. Consider a finitely valued representation $\alg{A} \leq \FinRng(X, \LA)$, a finite cover $X_{1}, \dots, X_{n}$ of $X$, and a homomorphism $h\colon \alg{A} \to \LA$.
  
  By Fact~\ref{fact: no endomorphisms implies fsi} the image $h[\A] \leq \LA$ has at most two congruences, so $\ker h$ is either the top element of $\Con_{\class{K}} \A$ or it is a coatom. In the latter case, $\ker h$ is finitely meet irreducible, and so by the distributivity of $\Con_{\class{K}} \A$ it is finitely meet prime; therefore, the inequality $\ker \pi_{X_{1}} \wedge \dots \wedge \ker \pi_{X_{n}} = \Delta_{\alg{A}} \leq \ker h$ implies that $\ker \pi_{X_{i}} \leq \ker h$ for some ${i \in \{ 1, \dots, n \}}$, and hence $h$ factors through~$\pi_{X_{i}}$.
  Suppose now that we are in the remaining case: $\ker h$ is the top element of $\Con_{\class{K}} \A$.
  If $n \geq 1$, then $\ker \pi_{X_{1}} \leq \ker h$ and so $h$ factors through~$\pi_{X_{1}}$.
  If $n = 0$, then $X = \emptyset$ and so $\alg{A}$ is either a singleton or the empty algebra; but then $h[\alg{A}]$ would be either the empty subalgebra of $\LA$ (impossible by hypothesis) or a singleton subalgebra (impossible by Remark~\ref{rem: singleton}).
\end{proof}

\begin{remark}
  The implication from congruence distributivity to the ultrafilter J\'{o}nsson property is a special case of J\'{o}nsson's Lemma~\cite[Lemma~II.4.3]{Jonsson1995}, which states that for each finitely meet prime congruence $\theta$ on a congruence distributive algebra $\A \leq \prod_{x \in X} \A_{x}$ there is an ultrafilter $\U$ on~$X$ such that $\equalizer{f}{g} \in \U$ implies $\pair{f}{g} \in \theta$ for all $f, g \in \A$. The proof of this implication essentially mimics the proof of J\'{o}nsson's lemma.
\end{remark}

\subsection{The CD Duality Theorem}

  We now finally turn to proving our main duality result. Firstly, we observe that congruence distributivity ensures that finitely valued and canonically finitely valued $\LA$-algebras coincide, provided that $\LA$ has only trivial partial endomorphisms.

\begin{fact}\label{fact: compact implies full}
  Let $\LA$ be a nontrivial algebra with only trivial partial endomorphisms and without the empty subalgebra. Suppose that all finitely valued $\LA$-algebras are $\class{K}$-relatively congruence distributive for some prevariety $\class{K}$ containing $\LA$. Then each compact $\LA$-space is full.
\end{fact}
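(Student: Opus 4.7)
The plan is to reduce the fullness of a compact $\LA$-space $\X$ directly to the compact J\'{o}nsson property applied to $\Comp \X$, and then use the triviality of partial endomorphisms of $\LA$ to upgrade ``factors through $\pi_x$'' to ``equals $\pi_x$''.

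First, observe that since $\X$ is compact we have $\Comp \X \leq \Cont(X, \LA) \leq \FinRng(X, \LA)$ by Fact~\ref{fact:compact-finitely-valued}, so $\Comp \X$ is a finitely valued $\LA$-algebra and $\X$ itself is already a compact $\LA$-representation of $\Comp \X$. By the standing hypothesis of the fact, $\Comp \X$ is $\class{K}$-relatively congruence distributive with respect to the prevariety $\class{K} \ni \LA$, so Lemma~\ref{lemma: rcd implies jonsson} applies and $\Comp \X$ has the J\'{o}nsson property in all three equivalent forms of Theorem~\ref{thm: jonsson equivalence}.

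Now fix an arbitrary homomorphism $h\colon \Comp \X \to \LA$. By the compact J\'{o}nsson property applied to the compact representation $\X$ of $\Comp \X$, there exists a point $x \in X$ such that $h$ factors through $\pi_x$, i.e.\ there is a homomorphism $\phi\colon \pi_x[\Comp \X] \to \LA$ with $h = \phi \circ \pi_x$. Since $\pi_x[\Comp \X]$ is a subalgebra of $\LA$, the map $\phi$ is a partial endomorphism of~$\LA$. By the assumption that $\LA$ has only trivial partial endomorphisms, $\phi$ is the inclusion $\pi_x[\Comp \X] \hookrightarrow \LA$, so $h(f) = \phi(\pi_x(f)) = \pi_x(f) = f_x$ for every $f \in \Comp \X$. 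Thus $h = \pi_x$, which is exactly the statement that $\X$ is full.

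There is essentially no obstacle here beyond assembling the already-proved pieces: the non-triviality and absence of the empty subalgebra are precisely the hypotheses used in Lemma~\ref{lemma: rcd implies jonsson} (and also rule out the pathological cases from Remark~\ref{remark: empty jonsson}), while the absence of nontrivial partial endomorphisms is what bridges the gap between the factorization of $h$ through $\pi_x$ and the equality $h = \pi_x$.
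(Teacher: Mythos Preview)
Your proof is correct and follows essentially the same approach as the paper: apply Lemma~\ref{lemma: rcd implies jonsson} to obtain the J\'{o}nsson property for $\Comp \X$, use its compact form to factor $h$ through some $\pi_x$, and then invoke the triviality of partial endomorphisms to conclude $h = \pi_x$. You have simply made explicit a few steps (that $\Comp \X$ is finitely valued, and why the factoring map is a partial endomorphism) which the paper leaves to the reader.
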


\begin{proof}
  By Lemma~\ref{lemma: rcd implies jonsson}, finitely valued $\LA$-algebras have the J\'{o}nsson property. 
  Let $\X$ be a compact $\LA$-space, and let $h\colon \Comp \X \to \LA$ be a homomorphism.
  By the J\'{o}nsson property, there is $x \in X$ such that $h$ factors through $\pi_{x}$.
  Since $\LA$ has only trivial partial endomorphisms, $h = \pi_{x}$.
\end{proof}

\begin{fact} \label{fact: fv = ffv}
	Let $\LA$ be a nontrivial algebra with only trivial partial endomorphisms and without the empty subalgebra. Suppose that all finitely valued $\LA$-algebras are $\class{K}$-relatively congruence distributive for some prevariety $\class{K}$ containing $\LA$.
	Then the following classes coincide:
\begin{enumerateroman}
\item\label{i:fvla} finitely valued $\LA$-algebras,
\item\label{i:cfvla} canonically finitely valued $\LA$-algebras,
\item\label{i:lwfla} $\LA$-algebras which lie locally in $\IOp \SOp \PfinOp(\LA)$,
\item\label{i:lfs} $\LA$-algebras which lie locally in the class of $\LA$-algebras with a finite spectrum.
\end{enumerateroman}
  In particular, each locally finite $\LA$-algebra is finitely valued.
\end{fact}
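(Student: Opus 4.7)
The plan is to prove the chain of implications (ii) $\Rightarrow$ (i) $\Rightarrow$ (iii) $\Rightarrow$ (ii), then (iii) $\Leftrightarrow$ (iv), and finally deduce the statement about locally finite algebras.

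The implication (ii) $\Rightarrow$ (i) is trivial: the canonical representation itself witnesses that $\A$ is finitely valued. The implication (i) $\Rightarrow$ (iii) is immediate from Fact~\ref{fact: locally finitely pointed}, which already gives local membership in $\IOp\SOp\PfinOp(\LA)$ without any assumption on $\LA$.

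The crucial step is (iii) $\Rightarrow$ (ii). Let $\A$ lie locally in $\IOp \SOp \PfinOp(\LA)$ and fix $a \in \A$. Let $\B \assign \Sg^{\A}(a)$, so $\B \leq \LA^{n}$ for some $n \in \N$ up to isomorphism. Then the $n$ projections give a representation of $\B$ over the finite set $\{1, \dots, n\}$, so $\B$ is finitely valued. The hypotheses let us invoke Fact~\ref{fact: compact implies full} to conclude that the compact $\LA$-representation of $\B$ obtained via the Stone--\v{C}ech compactification in Fact~\ref{fact: finrng cont iso} is also full; Theorem~\ref{thm: fully finitely valued} then yields that $\B$ is canonically finitely valued, so the set $\{h(a) \in \LA \mid h \in \Alg(\B,\LA)\}$ is finite. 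Since every homomorphism $\A \to \LA$ restricts to a homomorphism $\B \to \LA$, the set $\{g(a) \in \LA \mid g \in \Alg(\A, \LA)\}$ is contained in the former and is therefore finite, proving that $\A$ is canonically finitely valued. This is the main obstacle, since it is the only step that actually uses the standing hypotheses on $\LA$ and on the relative congruence distributivity of finitely valued $\LA$-algebras; the rest of the argument is essentially bookkeeping.

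For (iii) $\Leftrightarrow$ (iv), one direction is easy: if $\B$ is a finitely generated $\LA$-algebra with finite spectrum, then $\unit_{\B}\colon \B \into \LA^{\Spec \B}$ exhibits $\B \in \IOp \SOp \PfinOp(\LA)$. Conversely, if $\B$ is a finitely generated subalgebra in $\IOp \SOp \PfinOp(\LA)$, then $\B \leq \LA^{n}$ and every element of $\B$ has range of cardinality at most $n$, so $\B$ is finitely valued; by the equivalence (i) $\Leftrightarrow$ (ii) already established, $\B$ is canonically finitely valued, and because $\B$ is finitely generated, Theorem~\ref{thm: fully finitely valued}\eqref{i:ffvc-fin} yields that $\Spec \B$ is finite. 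Thus the two notions of local membership coincide.

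The final assertion is immediate: if $\A$ is locally finite and $\B \leq \A$ is finitely generated, then $\B$ is a finite $\LA$-algebra, hence embeds into $\LA^{X}$ for some set $X$, and since $\B$ is finite, finitely many projections $\pi_{x}$ already separate the points of $\B$, giving $\B \in \IOp \SOp \PfinOp(\LA)$. Thus $\A$ lies locally in $\IOp \SOp \PfinOp(\LA)$, and the equivalence (iii) $\Leftrightarrow$ (i) established above shows that $\A$ is finitely valued.
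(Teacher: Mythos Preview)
Your proof is correct and follows essentially the same strategy as the paper: both arguments hinge on applying Fact~\ref{fact: compact implies full} to finite (hence compact) representations of finitely generated subalgebras. The paper organizes the cycle slightly differently---proving (i)$\Leftrightarrow$(ii) first and then deducing (iii)$\Rightarrow$(i) via the closure of canonically finitely valued algebras under directed unions (Fact~\ref{fact: ffv closure properties})---whereas your direct element-by-element argument for (iii)$\Rightarrow$(ii) bypasses that closure fact; also, the Stone--\v{C}ech detour is unnecessary since the finite discrete space $\{1,\dots,n\}$ is already compact.
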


\begin{proof}
	\eqref{i:fvla} $\Rightarrow$ \eqref{i:cfvla}:
	Consider a finitely valued $\LA$-algebra~$\A$.
	This means that $\A$ has a compact representation.
	By \cref{fact: compact implies full}, it is full.
	Thus $\A$ has a compact full representation, and so it is canonically finitely valued by Theorem~\ref{thm: fully finitely valued}.

	\eqref{i:cfvla} $\Rightarrow$ \eqref{i:fvla}: trivial.

	\eqref{i:fvla} $\Rightarrow$ \eqref{i:lwfla}: this was proved in Fact~\ref{fact: locally finitely pointed}.

	\eqref{i:lwfla} $\Rightarrow$ \eqref{i:fvla}: the class of finitely valued $\LA$-algebras contains $\LA$ and is closed under $\IOp$, $\SOp$ and $\PfinOp$ by Fact~\ref{f:closure-under-operators}. The class of canonically finitely valued $\LA$-algebras is closed under directed unions by Fact~\ref{fact: ffv closure properties}. Given the equivalence between \eqref{i:fvla} and \eqref{i:cfvla}, it follows that each $\LA$-algebra which is a directed union of $\LA$-algebras in $\IOp \SOp \PfinOp(\LA)$ is finitely valued.

	\eqref{i:lfs} $\Rightarrow$ \eqref{i:lwfla}: each $\LA$-algebra with a finite spectrum lies in $\IOp \SOp \PfinOp(\LA)$.

	\eqref{i:lwfla} $\Rightarrow$ \eqref{i:lfs}: each $\alg{A} \in \IOp \SOp \PfinOp(\LA)$ has a representation over a finite set $X$ (which is in particular a compact representation). By Fact~\ref{fact: compact implies full} this representation is full. Thus each homomorphism $h\colon \alg{A} \to \LA$ is equal to one of the finitely many projections $\pi_{x}$ for some $x \in X$.

  Finally, if $\A$ is a locally finite $\LA$-algebra, then it lies locally in the class of finite $\LA$-algebras, which is a subclass of $\IOp \SOp \PfinOp(\LA)$.
\end{proof}

We now show that under the hypotheses on $\LA$ in \cref{fact: fv = ffv}, the property (about elements of a given $\LA$-algebra $\A$) of being represented by a function of finite range is absolute in the sense that it does not depend on the $\LA$-representation of $\A$. This contrasts with Example~\ref{ex: not canonically finite valued}, where an element was represented by a function of finite range in one representation but by a function of infinite range in another one.

\begin{fact}
	Let $\LA$ be a nontrivial algebra with only trivial partial endomorphisms and without the empty subalgebra. Suppose that all finitely valued $\LA$-algebras are $\class{K}$-relatively congruence distributive for some prevariety $\class{K}$ containing $\LA$. Then the following are equivalent for each element $a \in \A$ of an $\LA$-algebra $\A$:
\begin{enumerateroman}
\item \label{i:for-some}
$\rho(a)$ has finite range for some representation $\rho$ of $\A$.
\item \label{i:for-each}
$\rho(a)$ has finite range for each representation $\rho$ of $\A$.
\item \label{i:canonical}
$\Alg(\Sg^{\A}(a), \LA)$ is finite.
\end{enumerateroman}
  Consequently, $\A$ is finitely valued if and only if $\Alg(\Sg^{\A}(a), \LA)$ is finite for each $a \in \alg{A}$. Moreover, $\alg{A}$ has a largest finitely valued subalgebra, which consists of the elements satisfying the above equivalent conditions.
\end{fact}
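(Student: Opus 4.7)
The natural strategy is the cycle $(\ref{i:for-some}) \Rightarrow (\ref{i:canonical}) \Rightarrow (\ref{i:for-each}) \Rightarrow (\ref{i:for-some})$, where the last implication is immediate because every $\LA$-algebra has at least one representation, for instance the canonical one $\unit_\A$. The implication $(\ref{i:canonical}) \Rightarrow (\ref{i:for-each})$ is direct as well: given any representation $\rho\colon \A \into \LA^X$ and any $x \in X$, the restriction of $\pi_x \circ \rho$ to $\Sg^\A(a)$ lies in $\Alg(\Sg^\A(a), \LA)$, so the value $\rho(a)_x$ belongs to the finite set $\set{h(a)}{h \in \Alg(\Sg^\A(a), \LA)}$, making $\rho(a)$ finitely valued.

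The essential step is $(\ref{i:for-some}) \Rightarrow (\ref{i:canonical})$. Given a representation $\rho\colon \A \into \LA^X$ in which $\rho(a)$ has finite range, I restrict $\rho$ to $\Sg^\A(a)$. Since every element of $\Sg^\A(a)$ is a unary term in $a$, its image under $\rho$ is obtained pointwise as a term function of $\rho(a)$, and so also has finite range. Hence $\Sg^\A(a)$ admits a finitely valued representation, i.e.\ it is a finitely valued $\LA$-algebra. Under our hypotheses on $\LA$, \cref{fact: fv = ffv} then yields that $\Sg^\A(a)$ is canonically finitely valued, and since it is finitely generated, \cref{thm: fully finitely valued}(\ref{i:ffvc-fin}) forces $\Alg(\Sg^\A(a), \LA)$ to be finite.

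For the ``Consequently'' sentence, $\A$ is finitely valued iff (by \cref{fact: fv = ffv}) canonically finitely valued, iff each $\unit_\A(a)$ has finite range, iff (by the equivalence just established) $\Alg(\Sg^\A(a), \LA)$ is finite for every $a \in \A$. For the largest finitely valued subalgebra I set $B \assign \set{a \in \A}{\Alg(\Sg^\A(a), \LA) \text{ is finite}}$. Closure of $B$ under the operations follows from working in the canonical representation: if $a_1, \dots, a_n \in B$, then each $\unit_\A(a_i)$ has finite range, so the pointwise term $\unit_\A(t(a_1, \dots, a_n)) = t(\unit_\A(a_1), \dots, \unit_\A(a_n))$ also has finite range, whence $t(a_1, \dots, a_n) \in B$. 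This in particular gives $\Sg^\A(a) \subseteq B$ for $a \in B$, so $\Sg^B(a) = \Sg^\A(a)$, and the main equivalence applied inside $B$ shows that $B$ is finitely valued. The same observation shows that any finitely valued subalgebra of $\A$ is contained in $B$, establishing maximality. No step is really difficult; the main care needed is the bookkeeping around $\Sg^{(-)}(a)$ being ambient-independent once the single generator is known to lie in a sufficiently large subalgebra.
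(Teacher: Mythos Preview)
Your proof is correct and follows essentially the same route as the paper's: the same cycle of implications using \cref{fact: fv = ffv} and \cref{thm: fully finitely valued}(\ref{i:ffvc-fin}), the same closure argument for $B$ via finite ranges under pointwise operations, and the same maximality argument via $\Sg^{\alg{C}}(c) = \Sg^{\alg{A}}(c)$. The only cosmetic difference is that the paper verifies that $B$ is finitely valued by restricting an arbitrary representation of $\A$ to $B$ directly, whereas you route through the ``Consequently'' clause applied to $B$; both are equally short.
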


\begin{proof}
  \eqref{i:for-each} $\Rightarrow$ \eqref{i:for-some}: trivial. 
  
  \eqref{i:for-some} $\Rightarrow$ \eqref{i:canonical}: if a function $\rho(a)$ with $a \in \A$ has finite range in a representation $\rho$ of $\alg{A}$, then restricting $\rho$ to $\alg{B} \assign \Sg^{\A}(a)$ yields a finitely valued representation of the principal $\LA$-algebra $\alg{B}$. But then $\alg{B}$ is finitely valued, so it is canonically finitely valued by Fact~\ref{fact: fv = ffv}, and thus $\Alg(\alg{B}, \LA)$ is finite by Theorem~\ref{thm: fully finitely valued}.\eqref{i:ffvc-fin}.

  \eqref{i:canonical} $\Rightarrow$ \eqref{i:for-each}: given a representation $\rho\colon \A \to \LA^{X}$ the range of $\rho(a)$ consists of the set of values $(\pi_{x} \circ \rho)(a)$ where $x$ ranges over $X$, which is a subset of the set of values $h(a)$ where $h$ ranges over the finite set $\Alg(\alg{B}, \LA)$, where $\alg{B} \assign \Sg^{\A}(a)$.

  The elements of $\A$ that have finite range in every representation form a subalgebra $\alg{B} \leq \alg{A}$, since, for each $n$-ary operation $f$ in the signature, if $a_{1}, \dots, a_{n} \in \A$ have finite range in a given representation of $\A$, then so does $f^{\A}(a_{1}, \dots, a_{n})$. Clearly $\alg{B}$ is finitely valued, as witnessed by the restriction of any representation of $\A$ to $\alg{B}$. Finally, if $\alg{C}$ is a finitely valued subalgebra of $\alg{A}$, then $\Alg(\Sg^{\alg{C}}(c), \LA)$ is finite for each $c \in \alg{C}$, but $\Sg^{\alg{C}}(c) = \Sg^{\alg{A}}(c)$, so $\Alg(\Sg^{\alg{A}}(c), \LA)$ is finite and $c \in \alg{B}$, and therefore $\alg{C} \leq \alg{B}$.
\end{proof}

  The following is the main duality result of the paper.
  The crucial improvement with respect to the cheaper duality in \cref{thm: duality canonically fv} is that we no longer need to include the fullness condition on $\LA$-spaces explicitly.

\begin{theorem}[CD Duality Theorem for finitely valued $\LA$-algebras] \label{thm: cd duality}
  Let $\LA$ be a nontrivial algebra with only trivial partial endomorphisms and without the empty subalgebra. If all finitely valued $\LA$-algebras are $\class{K}$-relatively congruence distributive for some prevariety $\class{K}$ containing $\LA$ (for instance, if $\VOp(\LA)$ is congruence distributive), then $\Spec$ and $\Comp$ form a dual equivalence between finitely valued $\LA$-algebras and compact separated $\LA$-spaces.
\end{theorem}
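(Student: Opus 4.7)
The plan is to obtain this result by upgrading the duality of Theorem~\ref{thm: duality canonically fv} for canonically finitely valued $\LA$-algebras and compact full separated $\LA$-spaces. Under the current hypotheses, the class of finitely valued $\LA$-algebras coincides with the class of canonically finitely valued $\LA$-algebras by Fact~\ref{fact: fv = ffv}, so there is nothing to do on the algebraic side beyond quoting that fact. On the spatial side, we need to show that the fullness condition is automatic, i.e.\ every compact separated $\LA$-space is full. This is exactly what Fact~\ref{fact: compact implies full} delivers.

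Concretely, I would proceed as follows. First, assume the main dichotomy: either $\VOp(\LA)$ is congruence distributive, or we are directly given a prevariety $\class{K}$ containing $\LA$ such that every finitely valued $\LA$-algebra is $\class{K}$-relatively congruence distributive. In the first case, taking $\class{K} \assign \VOp(\LA)$ (which contains the finitely valued $\LA$-algebras, since $\IOp \SOp \POp(\LA) \subseteq \VOp(\LA)$), each finitely valued $\LA$-algebra is a fortiori $\class{K}$-relatively congruence distributive because every congruence is a $\class{K}$-relative congruence and the full congruence lattice is distributive. So both cases reduce to the assumption in Fact~\ref{fact: compact implies full} and Fact~\ref{fact: fv = ffv}.

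Next I combine the pieces. By Theorem~\ref{thm: duality canonically fv}, $\Spec$ and $\Comp$ form a dual equivalence with unit $\unit$ and counit $\ev$ between canonically finitely valued $\LA$-algebras and compact full separated $\LA$-spaces. By Fact~\ref{fact: fv = ffv}, the former category coincides with the category of finitely valued $\LA$-algebras. By Fact~\ref{fact: compact implies full}, every compact $\LA$-space is full, and in particular the forgetful inclusion from the category of compact full separated $\LA$-spaces into the category of compact separated $\LA$-spaces is an isomorphism of categories. Transporting the duality along these two identifications yields the desired dual equivalence between finitely valued $\LA$-algebras and compact separated $\LA$-spaces, still witnessed by $\unit$ and~$\ev$.

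There is no real obstacle here: the substantive work was already carried out in the previous sections, namely in Lemma~\ref{lemma: rcd implies jonsson} (relative congruence distributivity implies the J\'{o}nsson property for finitely valued $\LA$-algebras), Fact~\ref{fact: compact implies full} (J\'{o}nsson plus only trivial partial endomorphisms give fullness of compact $\LA$-spaces), and Fact~\ref{fact: fv = ffv} (finitely valued equals canonically finitely valued under these hypotheses). The only point worth double-checking in the write-up is the alternative hypothesis ``$\VOp(\LA)$ is congruence distributive'': one needs to observe that this implies the main assumption with $\class{K} \assign \Alg$, since every $\LA$-algebra is then congruence distributive and hence in particular $\Alg$-relatively congruence distributive.
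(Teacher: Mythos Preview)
Your proposal is correct and follows exactly the same route as the paper: invoke Theorem~\ref{thm: duality canonically fv}, then use Fact~\ref{fact: fv = ffv} to collapse finitely valued and canonically finitely valued $\LA$-algebras, and Fact~\ref{fact: compact implies full} to drop the fullness hypothesis on the spatial side. Your extra paragraph justifying the parenthetical ``for instance, if $\VOp(\LA)$ is congruence distributive'' is a welcome elaboration that the paper leaves implicit.
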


\begin{proof}
  This is the duality of Theorem~\ref{thm: duality canonically fv}, taking into account that finitely valued and canonically finitely valued $\LA$-algebras now coincide by Fact~\ref{fact: fv = ffv}, as do compact and compact full $\LA$-spaces by Fact~\ref{fact: compact implies full}.
\end{proof}

  The assumption that the empty algebra is not a subalgebra of $\LA$ can be removed, at the cost of slightly complicating the statement of the duality. Namely, the empty space with empty $\Comp \X$ needs to be excluded (but not the empty space with $\Comp \X$ a singleton). This is because $\X$ is not full if $\X$ is the empty space with $\Comp \X$ empty. On the other hand, if $\X$ is the empty space with $\Comp \X$ a singleton, then $\X$ is full if and only if $\LA$ has no singleton subalgebras.

  In addition to the above duality, which applies if all finitely valued $\LA$-algebras are $\class{K}$-relatively congruence distributive for some prevariety $\class{K}$ containing $\LA$, we can study congruences of finitely valued $\class{K}$-relatively congruence distributive $\LA$-algebras without assuming that all finitely valued $\LA$-algebras are congruence distributive. The following theorem tells us that if $\Con_{\class{K}} \alg{A}$ is distributive for $\class{K} \assign \Alg$, then there is a very concrete reason for this, namely the existence of an isomorphism between $\Con_{\class{K}} \alg{A}$ and the lattice of closed, or equivalently compact, subsets of the spectrum $\Alg(\A, \LA)$ of $\A$.

\begin{theorem}[Representation of relative congruences] \label{thm: representation of congruences}
	Let $\LA$ be a nontrivial algebra with only trivial partial endomorphisms and without the empty subalgebra. Let $\A$ be an $\Alg$-relatively congruence distributive finitely valued $\LA$-algebra.
  Given a compact separated representation of $\A$ on a space $X$, such as the canonical representation of $\A$ on its spectrum $\Alg(\A, \LA)$, the lattice $\Con_{\Alg} \A$ is anti-isomorphic to the lattice $\CS(X)$ of closed subsets of $X$ via the map
\begin{align*}
	\CS(X) & \longrightarrow \Con_{\Alg} \A\\
  	Y & \longmapsto \ker \pi_{Y}.
\end{align*}
\end{theorem}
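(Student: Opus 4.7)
The strategy is to exhibit an explicit inverse $\beta\colon \Con_{\Alg}\A \to \CS(X)$ to $\alpha\colon Y \mapsto \ker\pi_Y$ and to verify that both maps are order-reversing, which makes $\alpha$ a lattice anti-isomorphism between complete lattices. Before anything else, I would invoke \cref{lemma: rcd implies jonsson} with $\class{K}\assign\Alg$ to equip $\A$ with the J\'{o}nsson property. Together with the hypothesis that $\LA$ has only trivial partial endomorphisms, the compact version sharpens to the following key fact: for every compact representation $\B\leq \Cont(Z,\LA)$ of an $\Alg$-relatively congruence distributive finitely valued $\LA$-algebra $\B$, every homomorphism $\B\to\LA$ equals $\pi_z$ for some $z\in Z$. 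I then set
\[
\beta(\theta)\assign\{x\in X : \theta\subseteq \ker \pi_x\} = \bigcap_{(f,g)\in\theta}\equalizer{f}{g};
\]
this is closed because each equalizer is clopen (as $\LA$ is discrete), and $\beta$ is plainly order-reversing.

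For $\alpha\beta=\idmap$, the inclusion $\theta\subseteq\ker\pi_{\beta(\theta)}$ is immediate from the definition. For the reverse, take $(f,g)\notin\theta$. Then $\bar{f}\neq\bar{g}$ in $\A/\theta$, which, being $\Alg$-relative, embeds subdirectly into a power of subalgebras of $\LA$; composing a separating projection with the quotient map gives a homomorphism $h\colon\A\to\LA$ satisfying $\theta\subseteq\ker h$ and $h(f)\neq h(g)$. The key fact then yields $h=\pi_x$ for some $x\in X$, so $x\in\beta(\theta)$ and $f(x)\neq g(x)$, whence $(f,g)\notin\ker\pi_{\beta(\theta)}$.

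For $\beta\alpha=\idmap$, I need $x\notin Y$ to imply $\ker\pi_Y\not\subseteq\ker\pi_x$. Suppose otherwise, so that $\pi_x = h\circ\pi_Y$ for some homomorphism $h\colon\B\to\LA$ where $\B\assign\pi_Y[\A]\leq\Cont(Y,\LA)$. The crux is to check that $\B$ satisfies the hypotheses of the key fact for its own compact representation on $Y$: it lies in $\Alg$ as a subalgebra of $\LA^Y$, it is finitely valued because $Y$ is compact, and its $\Alg$-congruence lattice is isomorphic to the principal filter above $\ker\pi_Y$ in the distributive lattice $\Con_{\Alg}\A$, hence itself distributive. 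The fact then yields $h=\pi_y$ for some $y\in Y$, so $\pi_x=\pi_y$ as homomorphisms $\A\to\LA$, and separation of the representation forces $x=y\in Y$, a contradiction. Since $\alpha$ is visibly order-reversing and $\beta$ is its order-reversing inverse, together they constitute a lattice anti-isomorphism. The main obstacle is this last step, which hinges on transferring $\Alg$-relative congruence distributivity from $\A$ to the restricted quotient $\B$ so that the J\'{o}nsson machinery becomes available at that level; once that is secured, everything else is duality bookkeeping.
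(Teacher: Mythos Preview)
Your proof is correct and, in fact, more thorough than the paper's own argument. The paper establishes only that $\Theta\colon Y\mapsto\ker\pi_Y$ is an order anti-embedding: it checks that $Z\subseteq Y$ implies $\Theta(Y)\leq\Theta(Z)$ and, conversely, that $\Theta(Y)\leq\Theta(Z)$ implies $Z\subseteq Y$, the latter via exactly your $\beta\alpha$ argument (factor $\pi_z$ through $\pi_Y$, then invoke fullness of the restricted representation $\pi_Y[\A]\leq\Cont(Y,\LA)$ to identify the factoring map with some $\pi_y$, and conclude $z=y$ by separation). Surjectivity of $\Theta$ is not addressed explicitly in the paper; your $\alpha\beta=\idmap$ step supplies it, using fullness of the representation of $\A$ on $X$ itself to realise any separating homomorphism $h\colon\A\to\LA$ with $\theta\subseteq\ker h$ as a point evaluation.

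You are also more careful on the point you flag as the main obstacle. The paper appeals to Fact~\ref{fact: compact implies full} to conclude that the restricted representation on $Y$ is full, but that fact is stated under the blanket hypothesis that \emph{all} finitely valued $\LA$-algebras are $\class{K}$-relatively congruence distributive, whereas Theorem~\ref{thm: representation of congruences} only assumes this of $\A$. Your observation that $\Con_{\Alg}\B$ is (via the correspondence theorem) the principal filter above $\ker\pi_Y$ in the distributive lattice $\Con_{\Alg}\A$, hence itself distributive, is exactly what is needed to make Lemma~\ref{lemma: rcd implies jonsson} apply to $\B$ directly and close this gap. So your route is the same in spirit as the paper's but patches two points the paper leaves implicit.
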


\begin{proof}
	Let $\Theta \colon \CS(X) \to \Con_{\Alg} \A$ be the map in the statement.
  Note that, for every closed subset $Y$ of $X$, $\Theta(Y)$ is indeed a relative congruence of $\A$ because $\pi_{Y}$ is a homomorphism into an $\LA$-algebra. Clearly, for any closed subsets $Y$ and $Z$ of $X$, the inclusion $Z \subseteq Y$ implies $\Theta(Y) \leq \Theta(Z)$. 

	On the other hand, suppose $\Theta(Y) \leq \Theta(Z)$ and let us prove $Z \subseteq Y$. 
	Let $z \in Z$, and let us prove $z \in Y$.
	Since $\Theta(Y) \leq \Theta(Z) \leq \ker \pi_{z}$, by the Homomorphism Theorem there is a unique homomorphism $h\colon \pi_Y[\A] \to \LA$ making the following diagram commute:
	\[
	\begin{tikzcd}
		\A \arrow[swap, two heads]{d}{\pi_Y} \arrow{r}{\pi_z}& \LA\\
		\pi_Y[\A] \arrow[dashed,swap]{ru}{h}
	\end{tikzcd}
	\]
	
  Now consider the representation $\pi_Y[\A] \leq \Cont(Y, \LA)$, i.e.\ the restriction of the representation $\A \leq \Cont(X, \LA)$ to~$Y$. This is a compact separated representation because $\A \leq \Cont(X, \LA)$ is a separated representation and $Y$ is compact (due to being a closed subset of a compact space). Under our assumptions, each compact representation is full (Fact~\ref{fact: compact implies full}) and so $h = \pi_{y}$ for some $y \in Y$. 
  Therefore, for every $f \in \A$ we have $\pi_{z}(f) = h(\pi_Y(f)) = \pi_y(f)$, i.e.\ $f_z = f_y$.
  Since $\A$ is separating, $z = y$, and so $z \in Y$.
\end{proof}

\section{\texorpdfstring{Duality for $\LA$-constrained spaces: near unanimity}{Duality for L-constrained spaces: near unanimity}} \label{sec: nu}

  The main duality result of the paper (the CD Duality Theorem~\ref{thm: cd duality}) was proved in the previous section. Nonetheless, the reader may well feel some dissatisfaction with this theorem. After all, it is not at all clear how to obtain something like the Priestley duality for bounded distributive lattices or the Cignoli--Marra duality for finitely valued MV-algebras as special cases of this duality.

  To this end, the remainder of the paper is devoted to providing a categorical isomorphism between the category of $\LA$-spaces that featured in the duality theorem and a category of more tangible spaces that we call $\LA$-constrained spaces. These consist of a topological space $X$ together with some relational structure. They specialize directly to Priestley spaces and the spaces of Cignoli and Marra.

  The benefit of this division of labor is that for some results, such as the representation of relative congruences (Theorem~\ref{thm: representation of congruences}), the more concrete category of spaces is simply not needed. This allows us to prove them under weaker hypotheses, but more importantly using cleaner proofs. A second benefit is that it naturally leads one to adopt the perspective that dual spaces of algebras consist of two structures which are quite alike, namely topological and $\LA$-topological structure. Contrast this with Priestley spaces, which combine two superficially quite different kinds of structure, namely topological structure and order structure. This perspective immediately makes certain notions, such as complete $\LA$-regularity, more visible.

  What will enable us to represent $\LA$-spaces more concretely is the Baker--Pixley property: given a separated compact representation of an $\LA$-algebra $\A$ on a space $X$, we can detect whether a function $f \in \Cont(X, \LA)$ belongs to $\alg{A}$ just by looking at $f$ locally, meaning by looking at its restrictions to subsets of cardinality at most $k$ for some fixed finite $k$.
  Like the J\'{o}nsson property, it again comes in three equivalent forms: the finite, the compact, and the ultrafilter Baker--Pixley property (Theorem~\ref{thm: bp equivalence}). A convenient sufficient condition which ensures that $\LA$ has the Baker--Pixley property is the presence of a so-called near unanimity term.

\begin{remark}
  In this section, instead of talking about an $\LA$-space $\X$ and its algebra of compatible functions $\Comp \X$, it will be more convenient to talk about a topological space $X$ and an algebra $\A \leq \Cont(X, \LA)$. When we talk about e.g.\ a separated compact representation $\A \leq \Cont(X, \LA)$, what we mean is that the inclusion map $\A \into \Cont(X, \LA)$ is a separated compact representation, which in turn means that $\X \assign \langle X, \A \rangle$ is a separated compact $\LA$-space.
\end{remark}

\subsection{The Baker--Pixley property}

\begin{notation}
  Given a natural number $k$, we use the following notation:
\begin{align*}
  I \subseteq_{k} X \iff I \subseteq X \text{ and } I \text{ has cardinality at most } k.
\end{align*}
\end{notation}

\begin{definition}
	Let $X$ be a set, $\A$ a subalgebra of $\LA^{X}$, and $f \in \LA^X$ a function. We say that $f$ is \emph{$k$-interpolated} by $\A$ for $k \geq 1$ if $\pi_{I}(f) \in \pi_{I}[\A]$ for all $I \subseteq_{k} X$, i.e.\ if for each $I \subseteq_{k} X$ there is some $g \in \alg{A}$ such that $\restrict{f}{I} = \restrict{g}{I}$.
\end{definition}

\begin{definition}
\label{d:BP-finite}
	An algebra $\LA$ has the \emph{finite $k$-ary Baker--Pixley property} for $k \geq 1$ if for each finite separated representation $\A \leq \LA^{X}$ and each function $f \in \LA^{X}$
\begin{align*}
  f \in \A \iff f \text{ is $k$-interpolated by } \A.
\end{align*}
\end{definition}

\begin{definition}
\label{d:BP-compact}
	An algebra $\LA$ has the \emph{compact $k$-ary Baker--Pixley property} for $k \geq 1$ if for each compact separated representation $\A \leq \Cont(X, \LA)$ and each function $f \in \Cont(X, \LA)$
\begin{align*}
  f \in \A \iff f \text{ is $k$-interpolated by } \A.
\end{align*}
\end{definition}

\begin{definition}\label{d:ultrafilter-interpolated}
  A function $f \in \FinRng(X, \LA)$ is \emph{ultrafilter $k$-interpolated} for $k \geq 1$ by an algebra $\A \leq \FinRng(X, \LA)$ if for all $I \subseteq_{k} \Uf X$ we have $\pi_{I}(f) \in \pi_{I}[\A]$.
\end{definition}

\begin{definition}
\label{d:BP-ultrafilter}
  An algebra $\LA$ has the \emph{ultrafilter $k$-ary Baker--Pixley property} for $k \geq 1$ if for each finitely valued separated representation $\A \leq \FinRng(X, \LA)$ and each function $f \in \FinRng(X, \LA)$
\begin{align*}
  f \in \A \iff &f \text{ is ultrafilter $k$-interpolated by } \A.
\end{align*}
\end{definition}

  The left-to-right implications in \cref{d:BP-finite,d:BP-compact,d:BP-ultrafilter} above hold trivially.
  
\begin{remark}
	If an algebra has the $k$-ary Baker--Pixley property (in any of the above forms), then it has the $j$-ary Baker--Pixley property for each $j \geq k$.
\end{remark}

\begin{remark}\label{r:no-need-for-separation}
  In case $k \geq 2$, the restriction to representations that are separated can be removed from each of the above formulations of the Baker--Pixley property, with the resulting conditions remaining equivalent to the original ones.

  To see this, consider a general representation $\A \leq \LA^{X}$ corresponding to an $\LA$-space $\X$ and suppose that a function $f \in \LA^{X}$ is $k$-interpolated by $\A$. Recall the definition of the equivalence relation $\theta_{\X}$ on $X$ from Definition~\ref{def: separated quotient} and the discussion of separated quotients there. The functions $g / \theta_{\X} \in \LA^{X / \theta_{\X}}$ for $g \in \A$ are all well-defined continuous functions. The same holds for $f$ because it is $2$-interpolated by $\A$: if $\pair{x}{y} \in \theta_{\X}$ for $x, y \in X$, then $f_{x} = f_{y}$. The following argument therefore goes through: because $f$ is $k$-interpolated by $\A$, it follows that $f / \theta_{\X} \in \LA^{X / \theta_{\X}}$ is $k$-interpolated by $\set{g / \theta_{\X} \in \LA^{X / \theta_{\X}}}{g \in \A}$, so by the Baker--Pixley property $f / \theta_{\X} = g / \theta_{\X}$ for some $g \in \A$, and therefore $f \in \A$.
\end{remark}

\begin{remark}
  In case $k = 1$, the restriction to representations that are separated cannot be simply removed (consider the diagonal subalgebra of $\LA^{2}$). However, we can replace it with a restriction to functions $f \in \LA^{X}$ which \emph{separate at most as much as~$\A$}: if $\pair{x}{y} \in \theta_{\X}$ (that is, if $g_x = g_y$ for all $g \in \A$), then $f_{x} = f_{y}$. This restriction will make the argument in the previous remark work. Conversely, if a representation $\A \leq \LA^{X}$ is separated, then each $f \in \LA^{X}$ separates at most as much as~$\A$.
\end{remark}

  We now show that these properties are equivalent.

\begin{theorem}[Equivalence between Baker--Pixley properties] \label{thm: bp equivalence}
  The following are equivalent for $k \geq 1$:
\begin{enumerateroman}
\item \label{i:finite-BP}
$\LA$ has the finite $k$-ary Baker--Pixley property.
\item \label{i:compact-BP}
$\LA$ has the compact $k$-ary Baker--Pixley property.
\item \label{i:ultrafilter-BP}
$\LA$ has the ultrafilter $k$-ary Baker--Pixley property.
\end{enumerateroman}
\end{theorem}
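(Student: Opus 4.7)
The plan is to mirror the proof of the J\'{o}nsson-property equivalence (Theorem~\ref{thm: jonsson equivalence}): I will close the cycle via (iii)~$\Rightarrow$~(ii)~$\Rightarrow$~(i) together with (ii)~$\Rightarrow$~(iii) and (i)~$\Rightarrow$~(iii). The bridge between the compact and ultrafilter formulations is the Stone--\v{C}ech isomorphism $\FinRng(X, \LA) \cong \Cont(\Uf X, \LA)$ of Fact~\ref{fact: finrng cont iso}; the bridge between the finite and compact formulations exploits the Stone property of compact separated $\LA$-spaces from Lemma~\ref{l:separating-then}.

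For (iii)~$\Rightarrow$~(ii), I would take a compact separated representation $\A \leq \Cont(X, \LA)$ and $f \in \Cont(X, \LA)$ that is $k$-interpolated by $\A$. Fact~\ref{fact:compact-finitely-valued} ensures that $\A$ is finitely valued, and Lemma~\ref{l:separating-then} ensures that $X$ is Stone, so every ultrafilter on $X$ converges to a unique point and $g^{\sharp}(\U) = g(\lim \U)$ for continuous $g$. Hence for each $I \subseteq_{k} \Uf X$ the condition $\pi_{I}(f) \in \pi_{I}[\A]$ reduces to $k$-interpolation at the corresponding set of limits in $X$, which is given by hypothesis, and (iii) then delivers $f \in \A$. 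The implication (ii)~$\Rightarrow$~(i) is immediate, since a finite separated representation is a compact separated representation with the discrete topology.

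For (ii)~$\Rightarrow$~(iii), I would start from a separated finitely valued $\A \leq \FinRng(X, \LA)$ and an ultrafilter $k$-interpolated $f \in \FinRng(X, \LA)$, and use Fact~\ref{fact: finrng cont iso} to pass to the compact representation $\A^{\sharp} \leq \Cont(\Uf X, \LA)$ in which $f^{\sharp}$ is $k$-interpolated. Since $\A^{\sharp}$ need not be separated, I would then take the separated quotient $\Y = \Sep \langle \Uf X, \A^{\sharp} \rangle$, which is compact and Stone by Lemma~\ref{lemma: separated quotient preserves properties}. For $k \geq 2$, $f^{\sharp}$ descends to a well-defined continuous $\bar{f} \in \Cont(\Y, \LA)$: if $\U_{1}$ and $\U_{2}$ are $\A^{\sharp}$-indistinguishable, $2$-interpolation at $\{\U_{1}, \U_{2}\}$ supplies some $g \in \A$ with $f^{\sharp}(\U_{1}) = g^{\sharp}(\U_{1}) = g^{\sharp}(\U_{2}) = f^{\sharp}(\U_{2})$. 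Likewise $\bar{f}$ inherits $k$-interpolation by $\Comp \Y$ by lifting $k$-subsets of $\Y$ back to $\Uf X$, and (ii) applied on $\Y$ gives $\bar{f} = \bar{g^{\sharp}}$ for some $g \in \A$, whence $f = g \in \A$.

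The remaining implication (i)~$\Rightarrow$~(iii) is where I expect the main obstacle. An easy first move applies (i) to the finite separated representation $\A|_{Y} \leq \LA^{Y}$ for each finite $Y \subseteq X$: the principal-ultrafilter content of the hypothesis yields that $f|_{Y}$ is $k$-interpolated by $\A|_{Y}$, so $f|_{Y} \in \A|_{Y}$ for every finite $Y$. The hard step is upgrading this pointwise consistency on finite subsets to global membership $f \in \A$, which is where the non-principal ultrafilter content of the hypothesis must enter. My plan is to re-use the Stone--\v{C}ech machinery of the previous paragraph: reduce to a compact Stone representation on $\Y$, use the finite range of $\bar{f}$ to produce a sufficiently fine finite clopen partition of $\Y$ through which $\bar{f}$ and enough chosen $k$-interpolators all factor, pass to the finite separated quotient of this partition, and apply (i) there; lifting the conclusion back should give $\bar{f}$ inside the descended algebra, hence $f \in \A$. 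Establishing the existence of such a finite clopen partition (terminating an iterative refinement argument) is the main technical challenge, and relies on the compactness of $\Y$ together with the finite range of $\bar{f}$ and its interpolators.
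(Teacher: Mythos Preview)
Your proposal is correct in outline but follows a more circuitous cycle than the paper, and the crucial compactness step in your hard implication is only gestured at.

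The paper runs the cycle as \eqref{i:finite-BP}~$\Rightarrow$~\eqref{i:compact-BP}~$\Rightarrow$~\eqref{i:ultrafilter-BP}~$\Rightarrow$~\eqref{i:finite-BP}. The last implication is trivial (on a finite set every ultrafilter is principal), and \eqref{i:compact-BP}~$\Rightarrow$~\eqref{i:ultrafilter-BP} is an immediate application of Fact~\ref{fact: finrng cont iso}: pass from $\A \leq \FinRng(X,\LA)$ to $\A^{\sharp} \leq \Cont(\Uf X,\LA)$ and apply the compact property there. No separated quotient is needed, because for $k \geq 2$ Remark~\ref{r:no-need-for-separation} dispenses with the separation hypothesis. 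So your elaborate detour through $\Sep\langle \Uf X, \A^{\sharp}\rangle$ in \eqref{i:compact-BP}~$\Rightarrow$~\eqref{i:ultrafilter-BP} is unnecessary, though not wrong.

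The substantive work in the paper is \eqref{i:finite-BP}~$\Rightarrow$~\eqref{i:compact-BP}, carried out directly on a compact representation $\A \leq \Cont(X,\LA)$: the sets $\equalizer{f}{g}^{k}$ for $g \in \A$ form an open cover of the compact space $X^{k}$, so finitely many $g_{1},\dots,g_{n}$ already $k$-interpolate $f$ at every $k$-tuple; one then quotients by the finite-index equivalence making $f,g_{1},\dots,g_{n}$ all constant on classes and applies the finite property on the quotient. Your plan for \eqref{i:finite-BP}~$\Rightarrow$~\eqref{i:ultrafilter-BP} is in effect to first manufacture a compact space (Stone--\v{C}ech plus separated quotient) and then perform exactly this argument there---so you are really proving \eqref{i:finite-BP}~$\Rightarrow$~\eqref{i:compact-BP}~$\Rightarrow$~\eqref{i:ultrafilter-BP} in one breath. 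What is missing from your description is the one-line reason your ``iterative refinement'' terminates: it is precisely the open-cover-of-$X^{k}$ observation above, after which no iteration is needed at all. Your phrase ``enough chosen $k$-interpolators'' should be replaced by this compactness step; once you see it, the argument is short and the detour through $\Uf X$ in this direction becomes superfluous.

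Your \eqref{i:ultrafilter-BP}~$\Rightarrow$~\eqref{i:compact-BP} via convergence of ultrafilters matches the paper's bonus direct proof of that implication. For $k=1$, both the paper and your approach require the ``separates at most as much as $\A$'' refinement discussed after Remark~\ref{r:no-need-for-separation}.
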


\begin{proof}
	We write the proof for $k \geq 2$; at the end of the proof we will mention how to treat also the case $k = 1$.

	\eqref{i:finite-BP} $\Rightarrow$ \eqref{i:compact-BP}: consider a compact representation $\A \leq \Cont(X, \LA)$ and a function $f \in \Cont(X, \LA)$ that is $k$-interpolated by $\alg{A}$. We show that $f \in \A$.
	
	Observe that for all continuous functions $g,h \colon X \to \LA$ the set
\begin{align*}
  \set{\pair{x}{y} \in X \times X}{g(x) = a, \, h(y) = b}
\end{align*}
   is clopen.
   Therefore for each $g \in \A$ the set $\equalizer{f}{g} \subseteq X$ is clopen in~$X$ and hence $\equalizer{f}{g}^{k}$ is clopen in~$X^{k}$. Moreover, the family $\set{\equalizer{f}{g}^{k}}{g \in \A}$ is a cover of~$X^{k}$, since $f$ is $k$-interpolated by~$\alg{A}$. Because $X$ is compact, so is $X^{k}$, and hence this open cover has a finite subcover. That is, there are functions $g_{1}, \dots, g_{n} \in \A$ such that for all $x_{1}, \dots, x_{k} \in X$ there is some $j \in \{1, \dots, n\}$ with $\langle x_{1}, \dots, x_{l} \rangle \in \equalizer{f}{g_{j}}^{k}$, which means that $f(x_i) = g_j(x_i)$ for all $i \in \{1, \dots, k\}$.
   Without loss of generality, we can suppose $n \geq 1$, because, since $f$ is interpolated by $\A$, there is $h \in \A$ such that $\restrict{f}{\varnothing} = \restrict{h}{\varnothing}$ (note that this equality is void). Therefore, we can throw any such $h$ in the set $\{g_1, \dots, g_n\}$, if the latter is empty.

  Let $\sim$ be the equivalence relation on $X$ such that
\begin{align*}
  x \sim y \iff f(x) = f(y) \text{ and } g_{j}(x) = g_{j}(x) \text{ for all }j \in \{ 1, \dots, n \}.
\end{align*}
(It could be shown that the condition ``$f(x) = f(y)$'' is redundant.)
  This equivalence relation has finitely many equivalence classes, since each of the finitely many functions $f$ and $g_{j}$ has finite range.

  Let $\alg{C}$ be the subalgebra of $\alg{A}$ generated by $\{g_{1}, \dots, g_n\} \subseteq \A$.
  We now show that $f \in \alg{C}$, from which it will follow that ${f \in \A}$.
  By definition of $\alg{C}$ and $\sim$, we have that $x \sim y$ implies $g(x) = g(y)$ for all $g \in \alg{C}$.
  The following is therefore a well-defined finitely valued representation of $\alg{C}$ on $X / {\sim}$:
  \begin{align*}
  	\rho \colon \alg{C} & \longinto \LA^{X /{\sim}}\\
  	g & \longmapsto ([x] \mapsto g(x)).
  \end{align*}

	We prove that the function $\rho(f)$ is $k$-interpolated by $\rho[\alg{C}]$: for all $x_1, \dots, x_k \in X$ there is some $g_{j}$ such that, for all $i \in \{1, \dots, k\}$, $f(x_i) = g_j(x_i)$, and so $(\rho(f))([x_i]) = f(x_i) = g_j(x_i) = (\rho(g_j))([x_i])$. Moreover, since $n \geq 1$, we have $\restrict{f}{\varnothing} = \restrict{g_1}{\varnothing}$.
	This proves that $\rho(f)$ is $k$-interpolated by $\rho[\alg{C}]$.

	Since $X /{\sim}$ is finite, the finite $k$-ary Baker--Pixley property implies that $\rho(f) \in \rho[\alg{C}]$.
	Since $\rho$ is injective, this implies $f \in \alg{C}$. From $f \in \alg{C} \leq \A$ we deduce $f \in \A$.

  \eqref{i:compact-BP} $\Rightarrow$ \eqref{i:ultrafilter-BP}: consider a representation $\A \leq \FinRng(X, \LA)$ and a function $f \in \FinRng(X, \LA)$ that is ultrafilter $k$-interpolated by $\alg{A}$.
  Recall from \cref{fact: finrng cont iso} that $\A$ has a canonical representation $\alg{A}^{\sharp} \leq \Cont(\Uf X, \LA)$ inside $\Cont(\Uf X, \LA)$.

	We claim that the function $f^\sharp$ is $k$-interpolated on $\Uf X$ by $\A^{\sharp}$.
	Consider $\U_1, \dots, \U_l \in \Uf X$ with $l \leq k$.
	Since $f \in \FinRng(X, \LA)$ is ultrafilter $k$-interpolated by $\A$, there is some $g \in \A$ such that $f^\sharp(\U_i) = g^\sharp(\U_j)$ for all $i \in \{1, \dots, l\}$.	
  This proves our claim that $f^{\sharp}$ is $k$-interpolated on $\Uf X$ by $\A^{\sharp}$.
  By the compact $k$-ary Baker--Pixley property, $f^\sharp \in \A^\sharp$, and thus $f \in \alg{A}$.

  \eqref{i:ultrafilter-BP} $\Rightarrow$ \eqref{i:finite-BP}: if $X$ is finite, then $\LA^{X} = \FinRng(X, \LA)$. Moreover, each ultrafilter on $X$ is then principal, and so being ultrafilter $k$-interpolated on $X$ by an algebra~$\A$ is equivalent to being $k$-interpolated on $X$ by $\alg{A}$.

  This concludes the proof of the theorem. For the interested reader, we also include a direct proof of the implication \eqref{i:ultrafilter-BP} $\Rightarrow$ \eqref{i:compact-BP}: consider a compact representation $\A \leq \Cont(X, \LA)$ and a function $f \in \Cont(X, \LA)$ that is $k$-interpolated by $\alg{A}$. We shall prove $f \in \A$. By the ultrafilter $k$-ary Baker--Pixley property, it suffices to show that $f$ is ultrafilter $k$-interpolated by $\alg{A}$. Consider therefore a set of ultrafilters $I = \{ \U_{1}, \dots, \U_{l} \} \subseteq_{k} \Uf X$. Each ultrafilter $\U_{i}$ on $X$ converges to some $x_{i} \in X$. That is, $x_{i} \in U$ implies $U \in \U_{i}$ for each open $U$. Let $J \assign \{ x_{1}, \dots, x_{l} \}$. Because $f$ is $k$-interpolated by $\alg{A}$, there is some $g \in \A$ such that $\pi_{J}(f) = \pi_{J}(g)$. We claim that $\pi_{I}(f) = \pi_{I}(g)$. To this end, it suffices to prove that $\pi_{\U_{i}}(f) = \pi_{\U_{i}}(g)$ for all $i \in \{ 1, \dots, l \}$, i.e.\ that $\equalizer{f}{g} \in \U_{i}$. But this holds because $x_{i} \in \equalizer{f}{g}$, the set $\equalizer{f}{g}$ is open, and $\U_{i}$ converges to~$x_{i}$.
  
 	Finally, as promised, we mention how the above proof can be adapted to cover also the case $k = 1$.
	All changes are quite straightforward except for those pertaining the proof of \eqref{i:finite-BP} $\Rightarrow$ \eqref{i:compact-BP}.
  In this case, in which $f$ is assumed to separate at most as much as~$\A$, one uses compactness of $\{\pair{x}{y} \in X \times X \mid f(x) \neq f(y)\}$ to show the existence of a finite list of elements $h_1, \dots, h_p$ such that, for all $x, y \in X$ with $f(x) \neq f(y)$, there is $t \in \{1, \dots, p\}$ such that $h_t(x) \neq h_t(y)$.
  Then in the definition of $x \sim y$ one also imposes that for all $t \in \{1, \dots, p\}$ we have $h_t(x) = h_t(y)$, and one includes $h_1, \dots, h_p$ among the generators of $\alg{C}$ in the definition of $\alg{C}$.
  In this way one ensures that $\rho(f)$ separates at most as much as $\rho[\alg{C}]$. (These changes are harmless for $k \geq 2$.)
\end{proof}

\subsection{Near unanimity terms and the Baker--Pixley property}

  For $k \geq 2$ the $k$-ary Baker--Pixley property can be witnessed syntactically by a so-called near unanimity term.

\begin{definition} \label{def: NU term}
An $n$-ary \emph{near unanimity term} on an algebra $\A$ for $n \geq 3$ is a term $\maj(x_{1}, \dots, x_{n})$ such that for all $a, a_{1}, \dots, a_{n} \in \A$
\begin{align*}
	\text{if $a_{i} = a$ for all but at most one $i \in \{ 1, \dots, n \}$, then $\maj^{\A}(a_{1}, \dots, a_{n}) = a$.}
\end{align*}
\end{definition}

\begin{remark}
	If $\maj(x_{1}, \dots, x_{n})$ is an $n$-ary near unanimity term, then for every $q \geq n$ the term $\maj'(x_1, \dots, x_q) \coloneqq \maj(x_{1}, \dots, x_{n})$ is a $q$-ary near unanimity term.
\end{remark}

  The simplest, i.e.\ ternary, case of a near unanimity term is called a \emph{majority term}. By definition, this is a ternary term $\maj(x_{1}, x_{2}, x_{3})$ such that
\begin{align*}
	\maj^{\A}(a, a, b) = \maj^{\A}(a, b, a) = \maj^{\A}(b, a, a) = a.
\end{align*}
  Each algebra with a lattice reduct has a majority term, namely
\begin{align*}
	\maj(x_{1}, x_{2}, x_{3}) \assign (x_{1} \wedge x_{2}) \vee (x_{2} \wedge x_{3}) \vee (x_{3} \wedge x_{1}).
\end{align*}

  Huhn~\cite[Satz~2.1]{Huhn1983} showed that an algebra $\LA$ has a $(k+1)$-ary near unanimity term if and only if the variety $\VOp(\LA)$ generated by $\LA$ has the so-called $k$-ary Chinese remainder property. The Chinese remainder property can more generally be defined relative to a prevariety $\class{K}$ if we replace arbitrary congruences by $\class{K}$-congruences.

\begin{definition}
  An algebra $\A$ has the \emph{$k$-ary Chinese remainder property} (\emph{relative} to a prevariety $\class{K}$) if, given $a_{1}, \dots, a_{n} \in \A$ and $\theta_{1}, \dots, \theta_{n} \in \Con \A$ ($\theta_{1}, \dots, \theta_{n} \in \Con_{\class{K}} \A$), the system of $n$ congruence equations $x \equiv a_{i} \mod{\theta_{i}}$, i.e.\ $\pair{x}{a_{i}} \in \theta_{i}$, has a solution $x \in \alg{A}$ whenever each subsystem of at most $k$ congruence equations has a solution. A prevariety $\class{K}$ has the \emph{(relative) $k$-ary Chinese remainder property} if each algebra in $\class{K}$ has the $k$-ary Chinese remainder property (relative to $\class{K}$).
\end{definition}

  More precisely, Huhn~\cite{Huhn1983} showed that each algebra with a $(k+1)$-ary near unanimity term has the $k$-ary Chinese remainder property, and conversely that if the free algebra $\alg{F}(k+1)$ over $k+1$ generators in a variety has the Chinese remainder property, then the variety has a $(k+1)$-ary near unanimity term. Because free algebras in $\VOp(\LA)$ in fact lie in $\Alg \assign \IOp \SOp \POp(\LA)$ and Huhn's proof only applies the Chinese remainder property to relative congruences of $\alg{F}(k+1)$ with respect to $\Alg$, namely the kernels of certain homomorphisms $\alg{F}(k+1) \to \alg{F}(2)$, we can modify Huhn's theorem as~follows.

\begin{theorem}[\cite{Huhn1983}]
  An algebra $\LA$ has a $(k+1)$-ary near unanimity term if and only if $\Alg \assign \IOp \SOp \POp(\LA)$ has the $k$-ary relative Chinese remainder property.
\end{theorem}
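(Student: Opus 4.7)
The plan is to prove both implications by adapting Huhn's classical argument to the relative setting, using that the free algebras in $\Alg$ exist (as subalgebras of powers of $\LA$ generated by projections) and coincide with the free algebras in $\VOp(\LA)$.

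For the forward direction, suppose $\LA$ has a $(k+1)$-ary near unanimity term $\maj$. Given $\A \in \Alg$, relative congruences $\theta_{1}, \dots, \theta_{n} \in \Con_{\Alg} \A$, and elements $a_{1}, \dots, a_{n} \in \A$ such that every $k$-subsystem has a solution, I would show by induction on $n$ that the full system is solvable. The case $n \leq k$ is trivial. For $n \geq k+1$, the inductive hypothesis applied to each of the $n-1$-element subsystems obtained by omitting the $i$-th equation (for $i \in \{1, \dots, k+1\}$) yields partial solutions $b_{1}, \dots, b_{k+1}$ with $b_{i} \equiv a_{j} \pmod{\theta_{j}}$ whenever $j \neq i$. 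Setting $m \assign \maj^{\A}(b_{1}, \dots, b_{k+1})$, one checks modulo each $\theta_{j}$: if $j > k+1$ then all $k+1$ inputs are congruent to $a_{j}$ and the idempotency identity yields $m \equiv a_{j} \pmod{\theta_{j}}$; if $j \leq k+1$ then exactly $k$ inputs are congruent to $a_{j}$ and the near unanimity identity, which holds in $\A / \theta_{j} \in \Alg$, gives the same conclusion.

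For the backward direction, I would apply the relative $k$-ary Chinese remainder property to the free algebra $\alg{F} \assign \alg{F}_{\Alg}(k+1)$ on generators $x_{1}, \dots, x_{k+1}$, which itself lies in $\Alg$. For each $i \in \{1, \dots, k+1\}$, let $\theta_{i}$ be the kernel of the homomorphism $f_{i}\colon \alg{F} \to \alg{F}_{\Alg}(2)$ onto the free algebra on generators $y, z$ determined by $x_{i} \mapsto y$ and $x_{j} \mapsto z$ for $j \neq i$. Since $\alg{F} / \theta_{i}$ embeds into $\alg{F}_{\Alg}(2) \in \Alg$, each $\theta_{i}$ is $\Alg$-relative. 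Choose $a_{i} \assign x_{\phi(i)}$ for any fixed $\phi(i) \in \{1, \dots, k+1\} \setminus \{i\}$. For any $S \subseteq \{1, \dots, k+1\}$ of size at most $k$, pick $l \in \{1, \dots, k+1\} \setminus S$; then $x_{l}$ is a simultaneous solution to the subsystem indexed by $S$, since for every $i \in S$ both $l \neq i$ and $\phi(i) \neq i$, hence $f_{i}(x_{l}) = z = f_{i}(x_{\phi(i)}) = f_{i}(a_{i})$. Invoking the relative Chinese remainder property yields $m \in \alg{F}$ with $m \equiv a_{i} \pmod{\theta_{i}}$ for all $i$; writing $m = t^{\alg{F}}(x_{1}, \dots, x_{k+1})$ for a suitable $(k+1)$-ary term $t$, applying $f_{i}$ translates each congruence into the identity $t(z, \dots, z, y, z, \dots, z) \approx z$ (with $y$ in position $i$) holding in $\alg{F}_{\Alg}(2)$. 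Because $\alg{F}_{\Alg}(2)$ is free on two generators in $\Alg$, the identity holds throughout $\Alg$, and in particular in $\LA$, so $t$ is the desired near unanimity term.

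The main technical obstacle is engineering the $\theta_{i}$ and $a_{i}$ so that the $k$-subsystem hypothesis of the relative Chinese remainder property is satisfied, and verifying that the resulting congruences are genuinely $\Alg$-relative rather than merely congruences of $\alg{F}$; the role of the intermediate algebra $\alg{F}_{\Alg}(2)$ is precisely to provide this packaging, which is also what allows the proof to descend from $\VOp(\LA)$ to the prevariety $\Alg$.
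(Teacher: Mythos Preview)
Your proposal is correct and follows essentially the same approach as the paper: the paper does not give a self-contained proof but simply cites Huhn and remarks that his argument relativizes because the free algebras in $\VOp(\LA)$ already lie in $\Alg = \IOp\SOp\POp(\LA)$ and the congruences Huhn uses are kernels of homomorphisms $\alg{F}(k+1) \to \alg{F}(2)$, hence $\Alg$-relative. Your write-up is precisely a fleshed-out version of this sketch, with the same forward induction using the near unanimity term and the same backward construction via the maps $f_i$ into the two-generated free algebra.
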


  Baker \& Pixley~\cite[Thm.~2.1, (2)$\Leftrightarrow$(3)]{BakerPixley1975} then observed that for each variety the Chinese remainder property is equivalent to the finite $k$-ary Baker--Pixley property.\footnote{Although the paper~\cite{Huhn1983} was published later than~\cite{BakerPixley1975}, Baker \& Pixley reference a preprint of Huhn containing the relevant result.} Here by the finite $k$-ary Baker--Pixley property for a \emph{class of algebras} $\class{K}$ (as opposed to an algebra~$\LA$) for $k \geq 2$ we mean the following condition: given a finite family of algebras $(\A_{x})_{x \in X}$ in $\class{K}$ and an algebra $\alg{A} \leq \prod_{x \in X} \A_{x}$, if $f \in \prod_{x \in X} \A_{x}$ is $k$-interpolated by $\alg{A}$, then $f \in \A$. This reduces to our previous definition of the Baker--Pixley property in case $\class{K} \assign \{ \LA \}$. 

  The proof of Baker \& Pixley can again be relativized to prevarieties: each prevariety has the relative Chinese remainder property if and only if it has the finite $k$-ary Baker--Pixley property. Combined with the previous theorem, this yields the following equivalence.

\begin{theorem} \label{thm: bp and nu}
  For any $k \geq 2$, $\Alg$ has the finite $k$-ary Baker--Pixley property if and only if $\LA$ has a $(k+1)$-ary near unanimity term. In particular, if $\LA$ has a $(k+1)$-ary near unanimity term, then $\LA$ has the finite $k$-ary Baker--Pixley property.
\end{theorem}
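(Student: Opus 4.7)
The plan is to prove the theorem by composing two classical equivalences from universal algebra that were explicitly telegraphed in the two paragraphs preceding the theorem statement. Specifically, I will chain together the relativized versions of Huhn's theorem and of the Baker--Pixley theorem, which the author has already stated.

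First, by the relativized Huhn's theorem (displayed as an unnumbered theorem just above): $\LA$ has a $(k+1)$-ary near unanimity term if and only if $\Alg = \IOp\SOp\POp(\LA)$ has the relative $k$-ary Chinese remainder property. The direction from NU term to Chinese remainder property uses Huhn's original argument from~\cite{Huhn1983} verbatim, since the relative Chinese remainder property is a weakening of the absolute one. The reverse direction again follows Huhn's proof, which constructs a $(k+1)$-ary near unanimity term from the Chinese remainder property applied to the free algebra $\alg{F}(k+1)$ in $\VOp(\LA)$; as the author notes, this free algebra already lies in $\Alg$, and the specific congruences on it to which the Chinese remainder property is applied are kernels of homomorphisms $\alg{F}(k+1) \to \alg{F}(2)$, hence are $\Alg$-relative.

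Second, I invoke the relativized Baker--Pixley theorem: a prevariety $\class{K}$ has the relative $k$-ary Chinese remainder property if and only if it has the finite $k$-ary Baker--Pixley property as a class, in the sense described in the text. The argument from~\cite[Thm.~2.1, (2)$\Leftrightarrow$(3)]{BakerPixley1975} transfers to the relative setting because all constructions involved---taking subalgebras, finite products, and quotients by meets of relative congruences---stay within the ambient prevariety. Composing these two equivalences for $\class{K} \assign \Alg$ gives the main biconditional of the theorem.

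For the ``In particular'' clause, the finite $k$-ary Baker--Pixley property for the class $\Alg$ immediately specializes to the property for $\LA$ in the sense of Definition~\ref{d:BP-finite}: any representation $\A \leq \LA^{X}$ is already a subalgebra of a product $\prod_{x \in X} \LA_{x}$ where each $\LA_{x} \assign \LA \in \Alg$, and a function $f \in \LA^{X}$ which is $k$-interpolated by $\A$ in the sense of Definition~\ref{d:BP-finite} is $k$-interpolated in the sense required by the class-level property. The main obstacle here is not a genuinely mathematical one but a bookkeeping one: the whole task reduces to verifying that Huhn's and Baker--Pixley's original proofs, phrased in the literature for varieties and absolute congruences, never leave $\Alg$ nor use congruences which fail to be $\Alg$-relative. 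Both proofs pass this check.
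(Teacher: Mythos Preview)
Your proposal is correct and takes essentially the same approach as the paper: compose the relativized Huhn theorem (NU term $\Leftrightarrow$ relative Chinese remainder property for $\Alg$) with the relativized Baker--Pixley theorem (relative Chinese remainder property $\Leftrightarrow$ finite $k$-ary Baker--Pixley property for the prevariety), exactly as the two paragraphs preceding the statement set up. Your additional explicit verification that the class-level property for $\Alg$ specializes to the single-algebra property of Definition~\ref{d:BP-finite} is a welcome clarification of the ``In particular'' clause that the paper leaves implicit.
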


  Mitschke~\cite{Mitschke1978} observed that from each near unanimity term on $\A$ one can construct J\'{o}nsson terms witnessing the congruence distributivity of $\VOp(\A)$. An alternative semantic proof due to Fried~\cite[Lemma~1.2.12]{KaarliPixley2001} establishes the congruence distributivity of $\VOp(\alg{A})$ directly.

\begin{theorem}[\cite{Mitschke1978}] \label{thm: NU implies CD}
	If $\LA$ has a $(k+1)$-ary near unanimity term for some $k \geq 2$, then $\VOp(\A)$ is congruence distributive.
\end{theorem}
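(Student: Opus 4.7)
My approach is to appeal to J\'{o}nsson's classical characterization: a variety $\mathcal{V}$ is congruence distributive if and only if there exist a natural number $n$ and ternary terms $p_0, p_1, \dots, p_n$ in the signature of $\mathcal{V}$ satisfying the \emph{J\'{o}nsson identities}
\begin{align*}
	p_0(x,y,z) & \equals x,  \\
	p_n(x,y,z) & \equals z, \\
	p_i(x,y,x) & \equals x \quad \text{for all } i \in \{ 0, \dots, n \}, \\
	p_i(x,x,z) & \equals p_{i+1}(x,x,z) \quad \text{for even } i, \\
	p_i(x,z,z) & \equals p_{i+1}(x,z,z) \quad \text{for odd } i.
\end{align*}
Since these identities are preserved under $\HOp$, $\SOp$ and $\POp$, it suffices to construct such terms for the signature of $\LA$ and verify the identities on $\LA$ itself, whence they propagate to the entire variety $\VOp(\LA)$.

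Starting from the $(k+1)$-ary near unanimity term $\maj$, I would construct $p_0, \dots, p_n$ (with $n$ depending on $k$) as terms of the form $\maj(t_{i,0}, \dots, t_{i,k})$, where each $t_{i,j}$ is drawn from $\{x, y, z\}$ according to a specific pattern. The prototype is the majority case $k = 2$: setting $p_0 \assign x$, $p_1 \assign \maj(x,y,z)$ and $p_2 \assign z$, the identity $p_1(x,y,x) \equals x$ is exactly the majority property, while $p_1(x,x,z) \equals x \equals p_0(x,x,z)$ and $p_1(x,z,z) \equals z \equals p_2(x,z,z)$ are immediate. The general case proceeds analogously but with a longer chain of intermediate terms, each obtained by sliding the position of $y$ among the arguments of $\maj$ while shifting the boundary between substituted $x$'s and $z$'s in a controlled way.

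The main step---and the main obstacle---is to pin down a pattern of substitutions that simultaneously delivers every consecutive J\'{o}nsson identity. Concretely, one must verify, for each $i$, that either the pair $p_i(x,x,z)$, $p_{i+1}(x,x,z)$ or the pair $p_i(x,z,z)$, $p_{i+1}(x,z,z)$ differs in at most one of the $k+1$ arguments of $\maj$, so that near unanimity collapses both members of the pair to the same value. At the same time, the substitution scheme must guarantee that, in each $p_i(x,y,x)$, at least $k$ of the arguments of $\maj$ evaluate to $x$, so that near unanimity yields $p_i(x,y,x) \equals x$. A case analysis on the parity of $i$ and the position of $y$ completes the verification. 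Once the J\'{o}nsson terms are in place, congruence distributivity of $\VOp(\LA)$ follows by Jónsson's theorem.
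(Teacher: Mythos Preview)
Your proposal is correct and follows precisely the approach the paper attributes to Mitschke: constructing J\'{o}nsson terms from the near unanimity term, with the majority case $k=2$ as the prototype. The paper itself does not give a proof but simply cites Mitschke's result (and mentions an alternative semantic argument due to Fried), so your sketch in fact supplies more detail than the paper does.
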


  We have seen that, for any $k \geq 2$, we have a convenient way to prove the $k$-ary Baker--Pixley property for $\LA$: namely, by exhibiting a ($k +1)$-ary near unanimity term on $\LA$.
What about the case of $k = 1$?

\begin{definition}
  A \emph{subdiagonal} of $\LA \times \LA$ is the diagonal algebra $\DiagAlg_{\C} \leq \C \times \C$ of some $\C \leq \LA$. A \emph{product subalgebra} of $\LA \times \LA$ is an algebra of the form $\C_{1} \times \C_{2}$ for some $\C_{1}, \C_{2} \leq \LA$. If each subalgebra of $\LA \times \LA$ has one of these two forms, we say that $\LA \times \LA$ \emph{only has subdiagonal or product subalgebras}.
\end{definition}

\begin{fact} \label{f:unary-in-terms-of-binary}
	The algebra $\LA$ has the unary Baker--Pixley property if and only if it has the binary Baker--Pixley property (for example, due to having a majority term) and moreover $\LA \times \LA$ only has subdiagonal or product subalgebras. 
\end{fact}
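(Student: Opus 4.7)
The plan is a two-point reduction in both directions. The binary BP follows from the unary BP by the remark already noted that the $k$-ary BP implies the $j$-ary BP for all $j \geq k$, so in the forward direction the substantive content is to derive the structural restriction on subalgebras of $\LA \times \LA$. Given $\B \leq \LA \times \LA$, I would view $\B$ as a representation on the two-element set $X = \{x_{1}, x_{2}\}$ and split on whether this representation is separated. If it is not, then $g(x_{1}) = g(x_{2})$ for every $g \in \B$, so $\B \leq \DiagAlg_{\LA}$, whence $\B = \DiagAlg_{\C}$ for $\C \assign \pi_{x_{1}}[\B] = \pi_{x_{2}}[\B]$, i.e.\ a subdiagonal. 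If it is separated, I claim $\B = \pi_{x_{1}}[\B] \times \pi_{x_{2}}[\B]$: indeed, for any $a \in \pi_{x_{1}}[\B]$ and $b \in \pi_{x_{2}}[\B]$, the function $f \in \LA^{X}$ with $f(x_{1}) = a$ and $f(x_{2}) = b$ is trivially $1$-interpolated by $\B$, so $f \in \B$ by the unary BP.

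For the converse, assume the binary BP together with the structural restriction on subalgebras of $\LA \times \LA$. Let $\A \leq \LA^{X}$ be a separated representation and $f \in \LA^{X}$ a function $1$-interpolated by $\A$. By the binary BP it suffices to verify that $f$ is $2$-interpolated by $\A$. Fix $I \subseteq_{2} X$; if $\abs{I} \leq 1$ the conclusion follows directly from $1$-interpolation, so assume $I = \{x, y\}$ with $x \neq y$. Separation of $\A$ produces some $h \in \A$ with $h(x) \neq h(y)$, which rules out the subdiagonal alternative for $\pi_{I}[\A] \leq \LA \times \LA$. By hypothesis, $\pi_{I}[\A]$ is then a product subalgebra, necessarily equal to $\pi_{x}[\A] \times \pi_{y}[\A]$, and since $f(x) \in \pi_{x}[\A]$ and $f(y) \in \pi_{y}[\A]$, the restriction of $f$ to $I$ belongs to $\pi_{I}[\A]$, as required.

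There is essentially no obstacle beyond keeping straight the dictionary between the two alternatives for subalgebras of $\LA \times \LA$ and the alternatives (separated versus not) for two-point representations. The only care needed is the degenerate case of a two-element set $I$ collapsing to a singleton, which is handled by $1$-interpolation, and the observation that in the converse direction the subdiagonal alternative is incompatible with separation whenever $x \neq y$, so we never need to extract information about $f$ beyond what $1$-interpolation already provides.
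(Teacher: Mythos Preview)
Your proof is correct and follows essentially the same approach as the paper's. The one difference worth noting is that in the converse direction you work directly with separated representations (the base definition of the finite Baker--Pixley property), which lets you dispense with the subdiagonal case for $\pi_{I}[\A]$ altogether, whereas the paper uses the equivalent reformulation for $k=1$ in terms of $f$ separating at most as much as $\A$ and must therefore treat that case explicitly.
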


\begin{proof}
	Suppose that $\LA$ has the unary Baker--Pixley property.
	Then it clearly has the binary Baker--Pixley property.
	Moreover, let $\A$ be a subalgebra of $\LA \times \LA$, and let $\C_1$ and $\C_2$ be the images of the projections of $\A$ onto the two coordinates.
	If there are $\pair{a}{b} \in \A$ such that $a \neq b$, then each $\pair{c}{d} \in \LA \times \LA$ separates at most as much as $\A$, and so $\pair{c}{d} \in \A$ if and only if $c \in \C_1$ and $d \in \C_2$, i.e.\ $\A = \C_1 \times \C_2$.
	On the other hand, if $a = b$ for all $\pair{a}{b} \in \A$, then $\A$ is a subdiagonal.

	For the converse direction, suppose that $\LA$ has the binary Baker--Pixley property and that each subalgebra of $\LA \times \LA$ is either a subdiagonal or a product subalgebra.
	Let us prove that $\LA$ has the unary Baker--Pixley property.
	Let $\A$ be a subalgebra of $\LA^X$ for some finite set $X$, and let $f \in \LA^X$ be such that $f$ is $1$-interpolated by $\A$ and separates at most as much as $\A$.
	To prove that $f \in \A$, it is enough to prove that $f$ is $2$-interpolated by $\A$.
	Consider $x \neq y$ in $X$.
	Let $\B$, $\C_1$ and $\C_2$ be the images of $\A$ under the projections
	\begin{align*}
		\pi_{x,y} \colon \LA^{X} & \longrightarrow \LA \times \LA & \pi_{x} \colon \LA^{X} & \longrightarrow \LA & \pi_{y} \colon \LA^{X} & \longrightarrow \LA\\
		g & \longmapsto \pair{g_x}{g_y} & g & \longmapsto g_x & g & \longmapsto g_y,
	\end{align*}
	respectively.
	Since $f$ is $1$-interpolated by $\A$, $f_x \in \C_1$ and $f_y \in \C_2$.
	If there are $\pair{a}{b} \in \B$ with $a \neq b$, then $\B$ is not a subdiagonal and so it is a product subalgebra, namely $\B = \C_1 \times \C_2$; then, $\pair{f_x}{f_y} \in \C_1 \times \C_2 = \B$, and so $f$ coincides on $\{x,y\}$ with some function in $\A$.
	Otherwise, for all $\pair{a}{b} \in \B$ we have $a = b$; therefore, since $f$ separates at most as much as $\A$, $f_x = f_y$; since $f$ is $1$-interpolated by $\A$, there is $g \in \A$ such that $f_x = g_x$, and then also $f_y = f_x = g_x = g_y$.
	This proves that $f$ is $2$-interpolated by $\A$.
\end{proof}

\begin{example}
  The standard MV-chain $\MVchain$ has the unary Baker--Pixley property, as does the two-element Boolean algebra $\Btwo$. In contrast, the standard positive MV-chain $\PMVchain$ and the two-element bounded distributive lattice $\Dtwo$ lack this property, as witnessed by the subalgebra $\{ \pair{0}{0}, \pair{0}{1}, \pair{1}{1} \}$, which is neither a subdiagonal nor a product subalgebra.
\end{example}

 In case $\LA$ is \emph{finite} and has a majority term, the unary Baker--Pixley property is equivalent to the \emph{semi\-primality} of $\LA$, a well-known universal algebraic property formulated in the realm of finite algebras~\cite[Section~3.4]{KaarliPixley2001}. Among its many equivalent characterizations, a prominent one is that each function that preserves the subalgebras of $\LA^{2}$ is a term function of $\LA$. However, the equivalences between the different characterizations of semiprimal algebras substantially rely on their finiteness. To avoid confusion as to which of these equivalent characterizations we have in mind, we therefore do not use the term ``semiprimal'' here.

\subsection{\texorpdfstring{$\LA$-constrained spaces}{L-constrained spaces}}
\label{subsec: constrained spaces}

  We now work towards the second main result of the paper (the Baker--Pixley Representation Theorem~\ref{thm: bp representation}), which is a categorical isomorphism between the category of compact separated $\LA$-spaces and a category of more tangible spaces called $k$-ary $\LA$-constrained spaces. We introduce $k$-ary $\LA$-constrained spaces in this subsection and prove the Baker--Pixley Representation Theorem in the next.

\begin{definition} \label{def: constraint}  \label{def: l-set}
	A \emph{constraint} on a set $X$ is a subalgebra of $\LA^{I}$ for some finite set $I \subseteq X$. A \emph{family of $k$-ary constraints} on $X$ consists of a constraint $\alg{A}_{I} \leq \LA^I$ for each $I \subseteq_{k} X$. Such a family is \emph{subdirect} if, for all $J \subseteq I \subseteq_{k} X$,
	\begin{align*}
		\A_{J} = \pi_{J}[\A_{I}],
	\end{align*}
	or, equivalently, if for each $g \in \LA^{J}$
	\begin{align*}
		g \in \A_J \iff g = \restrict{f}{J} \text{ for some } f \in \A_{I}.
	\end{align*}
\end{definition}

\begin{remark} \label{rem: constraints}
  If the set $X$ has cardinality at most $k$, then each subdirect family of $k$-ary constraints on $X$ can be reconstructed from the constraint $\A_{X}$. Otherwise, the family can be reconstructed from constraints $\A_{I}$ with $I$ of cardinality $k$.
\end{remark}

  There are several equivalent ways of presenting subdirect families of $k$-ary constraints on a non-empty set~$X$, and we shall switch between these according to what is most convenient for the purpose at hand:
\begin{enumerateroman}
\item as a family of algebras $\A_{I} \leq \LA^{I}$ for $I \subseteq_{k} X$,
\item as a family of algebras $\A_{\tuple{x}} \leq \LA^{k}$ for $\tuple{x} \in X^{k}$,
\item\label{i:family-of-relations} as a family of functions $\chi_{\A}^{\tuple{a}}\colon X^{k} \to \{ 0, 1 \}$ for $\tuple{a} \in \LA^{k}$,
\item as a function $\chi_{\A}\colon X^{k} \times \LA^{k} \to \{ 0, 1 \}$.
\end{enumerateroman}
  Notice that condition \eqref{i:family-of-relations} makes it clear that a subdirect family of $k$-ary constraints can be thought of as a family of $k$-ary predicates on $X$ indexed by $\tuple{a} \in \LA^{k}$.

  To describe $\LA$-valued functions on finite subsets of $X$ we use the notation
\begin{align*}
  & (x_{1} \mapsto a_{1}, \dots, x_{n} \mapsto a_{n}) & & \text{for } x_{1}, \dots, x_{n} \in X \text{ and } a_{1}, \dots, a_{n} \in \LA.
\end{align*}
  This denotes the unique function $f\colon \{ x_{1}, \dots, x_{n} \} \to \LA$, if it exists, such that $f\colon x_{i} \mapsto a_{i}$ for all $i \in \{ 1, \dots, n \}$. The function $(x_{1} \mapsto a_{1}, \dots, x_{n} \mapsto a_{n})$ exists if and only if $x_{i} = x_{j}$ implies $a_{i} = a_{j}$ for all $i, j \in \{ 1, \dots, n \}$.

  The following equivalence translates between the first two presentations:
\begin{align*}
  \langle a_{1}, \dots, a_{k} \rangle \in \A_{x_{1}, \dots, x_{k}} \iff (x_{1} \mapsto a_{1}, \dots, x_{k} \mapsto a_{k}) \in \A_{\{ x_{1}, \dots, x_{k} \}},
\end{align*}
  where the right-hand side means that the function exists and moreover belongs to $\A_{\{ x_{1}, \dots, x_{k} \}}$. Every family of algebras $\alg{A}_{\tuple{x}} \leq \LA^{k}$ for $\tuple{x} \in X^{k}$ can in turn be represented by its \emph{characteristic function}
  \begin{align*}
  \chi_{\A}\colon X^{k} \times \LA^{k} & \longrightarrow \{ 0, 1 \}\\
  (\tuple{x}, \tuple{a}) & \longmapsto 
  \begin{cases}
  1 & \text{if } \tuple{a} \in \A_{\tuple{x}},\\
  0 & \text{otherwise}.\\
  \end{cases}
\end{align*}
  This function can be decomposed into a family of functions $\chi_{\A}^{\tuple{a}}\colon X^{k} \to \{ 0, 1 \}$ indexed by $\tuple{a} \in \LA^{k}$, namely $\chi_{\A}^{\tuple{a}}\colon \tuple{x} \mapsto \chi_{\A}(\tuple{x}, \tuple{a})$. This turns a set equipped with a family of constraints into a structure in the sense of model theory for a relational signature that contains one $k$-ary predicate for each $k$-tuple $\tuple{a} \in \LA^{k}$.

\begin{remark} \label{r:empty}
  The above equivalence between different presentations of a subdirect family of constraints was stated for the case of non-empty $X$. What happens if $X$ is empty? This depends on whether the signature of $\LA$ contains a constant. A subdirect family $(\alg{A}_{I})_{I \subseteq_{k} X}$ of constraints on $X \assign \emptyset$ consists of a single constraint $\alg{A}_{\emptyset}$, which is either the empty $\LA$-algebra (if it exists) or the singleton $\LA$-algebra. If the empty $\LA$-algebra does not exist, the above equivalence works also for $X = \emptyset$. However, it fails if the empty $\LA$-algebra exists: the unique $\LA$-valued function on $\emptyset$ will count as compatible with the constraint if $\alg{A}_{\emptyset}$ is the singleton $\LA$-algebra but not if $\alg{A}_{\emptyset}$ is the empty $\LA$-algebra. In contrast, the corresponding family of $\LA$-algebras $\alg{A}_{\tuple{x}} \leq \LA^{k}$ for ${\tuple{x} \in X^{k}}$ is always the empty family if $X = \emptyset$.
\end{remark}

\begin{definition}
  A subdirect family of $k$-ary constraints on a topological space $X$ is \emph{continuous} if:
\begin{enumerateroman}
	\item $\A_{I} \leq \Cont(I, \LA)$ for each $I \subseteq_{k} X$, and
	\item \label{i:open} $X_{\tuple{a}} \assign \set{\tuple{x} \in X^{k}}{\tuple{a} \in \A_{\tuple{x}}}$ is open for each $\tuple{a} \in \LA^k$.
\end{enumerateroman}
\end{definition}

\begin{remark} \label{r:remark-on-continuity}
  Recall that the condition \eqref{i:open} above is equivalent to the continuity of $\chi_{\A}\colon X^{k} \times \LA^{k} \to \{ 0, 1 \}$ with respect to the Sierpi\'{n}ski topology on $\{ 0, 1 \}$ with the opens $\emptyset, \{ 1 \}, \{ 0, 1 \}$.
  Notice that the condition \eqref{i:open} extends to all $l \leq k$: in each continuous family of constraints on $X$, for any $l \leq k$ the set $\set{\tuple{x} \in X^{l}}{\tuple{a} \in \A_{\tuple{x}}}$ is open for each $\tuple{a} \in \LA^l$. This is because the set for~$l \geq 1$ is the preimage of the set for~$k$ under the continuous map $\Delta\colon \langle x_{1}, \dots, x_{l} \rangle \to \langle x_{1}, \dots, x_{l}, x_{1}, \dots, x_{1} \rangle$, and for $l = 0$ the set is open because $X^{0}$ is a discrete space.
\end{remark}

  The definition of a continuous subdirect family of constraints can also be stated in terms of the so-called \emph{Scott topology} on the subalgebras of a given algebra, as in the work for MV-algebras in \cite{CignoliMarra2012}. This is the topology on the set $\Sub(\alg{A})$ of subalgebras of an algebra $\alg{A}$ generated by the sets of the form
\begin{align*}
	\{\alg{B} \in \Sub(\A)\mid \C \leq \B\}
\end{align*}
for $\C$ ranging over all finitely generated subalgebras.\footnote{This is an instance of the more general notion of the Scott topology of a poset with directed suprema, applied to the poset $\Sub(\alg{A})$ ordered by inclusion. We do not need the general definition of the Scott topology here, but we refer the interested reader to~\cite[Sec.~II]{GierzHofmannEtAl2003} for the notion of the Scott topology on a directed complete partially ordered set, to~\cite[Cor.~II-1.15]{GierzHofmannEtAl2003} for the fact that the Scott topology on any algebraic domain (such as $\Sub(\alg{A})$) is generated by the principal upsets of compact elements, and to~\cite{BurrisSankappanavar1981} for the fact that the compact elements of $\Sub(\alg{A})$ are precisely the finitely generated subalgebras.}

\begin{fact} \label{fact: scott topology}
	The following conditions are equivalent for a subdirect family of constraints $(\alg{A}_{I})_{I \subseteq_{k} X}$ on a topological space $X$:
	\begin{enumerateroman}
		\item \label{i:continuous-contraints}
		$(\alg{A}_{I})_{I \subseteq_{k} X}$ is continuous as a family of constraints.
		\item \label{i:continuous-Scott}
		The map $X^k \to \Sub(\LA^k)$ that maps $\tuple{x}$ to $\A_{\tuple{x}}$ is continuous with respect to the Scott topology of $ \Sub(\LA^k)$.
	\end{enumerateroman}
\end{fact}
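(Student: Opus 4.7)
The plan is to prove the two implications separately by working with the subbasis of the Scott topology on $\Sub(\LA^k)$ consisting of the sets $U_{\C} \assign \{\B \in \Sub(\LA^k) \mid \C \leq \B\}$ for $\C$ ranging over the finitely generated subalgebras of $\LA^k$.

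For \eqref{i:continuous-contraints} $\Rightarrow$ \eqref{i:continuous-Scott}, I expect the argument to be essentially immediate. Scott continuity of the map $\Phi\colon X^k \to \Sub(\LA^k)$ sending $\tuple{x}$ to $\A_{\tuple{x}}$ reduces to showing that $\Phi^{-1}[U_{\C}]$ is open in $X^k$ for each finitely generated $\C \leq \LA^k$. Writing $\C = \Sg^{\LA^k}(\tuple{a}_1, \dots, \tuple{a}_m)$ and using that $\A_{\tuple{x}}$ is itself a subalgebra of $\LA^k$, the containment $\C \leq \A_{\tuple{x}}$ is equivalent to $\tuple{a}_1, \dots, \tuple{a}_m \in \A_{\tuple{x}}$. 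Hence $\Phi^{-1}[U_{\C}] = \bigcap_{j=1}^{m} X_{\tuple{a}_j}$, a finite intersection of open sets by condition \eqref{i:open} in the definition of continuous subdirect family.

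For \eqref{i:continuous-Scott} $\Rightarrow$ \eqref{i:continuous-contraints}, I first read off openness of each $X_{\tuple{a}}$ directly from Scott continuity of $\Phi$, since $X_{\tuple{a}} = \Phi^{-1}[U_{\Sg^{\LA^k}(\tuple{a})}]$. The less routine step is to deduce $\A_I \leq \Cont(I, \LA)$ for each $I \subseteq_{k} X$, which I plan to handle via a padding trick. Fix $f \in \A_I$ with $I = \{z_1, \dots, z_n\}$ of cardinality $n \leq k$, and a target point $z_i \in I$. Pad $(z_1, \dots, z_n)$ with copies of $z_1$ to a $k$-tuple $\tuple{x} \in X^k$, and pad $(f(z_1), \dots, f(z_n))$ correspondingly to $\tuple{b} \in \LA^k$. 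Then $\tuple{b} \in \A_{\tuple{x}}$, so openness of $X_{\tuple{b}}$ produces a basic open $W_1 \times \dots \times W_k \subseteq X_{\tuple{b}}$ containing $\tuple{x}$. Taking $V \assign W_i$ as the candidate neighborhood of $z_i$, the desired inclusion $V \cap I \subseteq f^{-1}[\{f(z_i)\}]$ should follow as follows: for $z_j \in V \cap I$, substituting $z_j$ into the $i$-th coordinate of $\tuple{x}$ yields a tuple $\tuple{y}$ still inside $W_1 \times \dots \times W_k$, hence $\tuple{b} \in \A_{\tuple{y}}$; since positions $i$ and $j$ of $\tuple{y}$ both equal $z_j$, the description of $\A_{\tuple{y}}$ as the pullback of $\A_{\{y_1, \dots, y_k\}}$ along the canonical embedding $\LA^{\{y_1, \dots, y_k\}} \hookrightarrow \LA^k$ forces $b_i = b_j$, i.e.\ $f(z_i) = f(z_j)$.

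I anticipate the main obstacle to be the final step of the second implication, namely passing from openness of the sets $X_{\tuple{a}}$ (which is what Scott continuity delivers directly) to continuity of the individual functions in $\A_I$. The padding argument outlined above is the key combinatorial ingredient: it exploits the fact that coincidences among the coordinates of a base tuple in $X^k$ propagate to coincidences among the corresponding coordinates of any compatible tuple in $\LA^k$, allowing one to leverage a basic open around the padded tuple into an open neighborhood of the target point separating it from points of $I$ with different $f$-values.
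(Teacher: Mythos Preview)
Your proof is correct. The direction \eqref{i:continuous-contraints} $\Rightarrow$ \eqref{i:continuous-Scott} and the first half of \eqref{i:continuous-Scott} $\Rightarrow$ \eqref{i:continuous-contraints} (recovering openness of each $X_{\tuple{a}}$) match the paper's argument exactly: both reduce Scott continuity to the subbasis $\{U_{\C} \mid \C \text{ finitely generated}\}$ and rewrite $\Phi^{-1}[U_{\C}]$ as a finite intersection $\bigcap_j X_{\tuple{a}_j}$.

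Where you differ is in the second half of \eqref{i:continuous-Scott} $\Rightarrow$ \eqref{i:continuous-contraints}. The paper's proof stops after establishing openness of each $X_{\tuple{a}}$ and simply identifies this with condition \eqref{i:continuous-contraints}, without addressing the requirement $\A_I \leq \Cont(I, \LA)$ in the definition of a continuous family. You supply this missing step via the padding argument: pad $I$ and $f$ to a $k$-tuple, use openness of $X_{\tuple{b}}$ to find a basic open box around $\tuple{x}$, and then exploit the fact that when two coordinates of a tuple $\tuple{y}$ coincide, membership $\tuple{b} \in \A_{\tuple{y}}$ forces the corresponding coordinates of $\tuple{b}$ to agree. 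This is correct and is exactly what is needed. In effect your proof shows that condition \eqref{i:open} in the definition of a continuous family already implies the condition $\A_I \leq \Cont(I, \LA)$, a point the paper uses tacitly but does not argue.
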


\begin{proof}
	Since it suffices to check the continuity of a function on a basis, \eqref{i:continuous-Scott} holds if and only if the following set is open for each finite $S \subseteq \LA^k$, with $\alg{B}$ denoting the subalgebra generated by $S$:
\begin{align*}
		\{\tuple{x} \in X^k \mid \alg{B} \subseteq \A_{\tuple{x}}\} = \{\tuple{x} \in X^k \mid S \subseteq \A_{\tuple{x}}\} = \bigcap_{\tuple{a} \in S} \{\tuple{x} \in X^k  \mid \tuple{a} \in \A_{\tuple{x}}\}.
\end{align*}
	Since the intersection is finite, this is equivalent to the condition that, for all $\tuple{a} \in \LA^k$, the set $\set{\tuple{x} \in X^{k}}{\tuple{a} \in \A_{\tuple{x}}}$ is open, i.e.\ \eqref{i:continuous-contraints}.
\end{proof}

\begin{definition} \label{d:constrained} \label{def: l-map}
  A \emph{$k$-ary $\LA$-constrained space} for $k \geq 2$ is a pair
\begin{align*}
  \X \assign \langle X, (\A_{I})_{I \subseteq_{k} X} \rangle
\end{align*}
  which consists of a topological space $X$ (called the \emph{underlying space} of $\X$) and a continuous subdirect family of constraints $(\A_{I})_{I \subseteq_{k} X}$ on $X$.
\end{definition}

\begin{notation}
  Whenever we write about $\LA$-constrained spaces $\X$ and $\Y$, we assume that $\X \assign \langle X, (\alg{A}_{I})_{I \subseteq_{k} X} \rangle$ and $\Y \assign \langle Y, (\alg{B}_{J})_{J \subseteq_{k} Y} \rangle$, i.e.\ we use the letter $\A$ for constraints on $\X$ and the letter $\alg{B}$ for constraints on $\Y$. \end{notation}

\begin{definition}
	A $k$-ary $\LA$-constrained space $\X$ for $k \geq 2$ is \emph{separated} if for all $x \neq y$ in $X$ there is some $\pair{a}{b} \in \A_{x, y}$ such that $a \neq b$.
\end{definition}

\begin{definition}
  A \emph{(continuous) $\LA$-constrained map} $\phi\colon \X \to \Y$ between $k$-ary $\LA$-constrained spaces $\X$ and $\Y$ is a (continuous) map $\phi\colon X \to Y$ such that for each $I \subseteq_{k} X$
\begin{align*}
  g \in \alg{B}_{\phi[I]} \implies g \circ \restrict{\phi}{I} \in \A_{I}.
\end{align*}
\end{definition}

  The category of $\LA$-constrained spaces and continuous $\LA$-constrained maps will be denoted by $\CLSpa$.

  In case $\LA \times \LA$ has only subdiagonal and product subalgebras, binary $\LA$-constrained spaces can be presented as what we call unary $\LA$-constrained spaces. This step is not strictly speaking necessary: the reader can simply ignore all definitions pertaining to unary $\LA$-constrained spaces and think of them as being exactly the same thing as binary $\LA$-constrained spaces in case $\LA$ only has subdiagonal and product subalgebras. However, unary $\LA$-spaces have an advantage when it comes to describing concrete $\LA$-constrained spaces: picturing a unary family of algebras plus possibly an equivalence relation is easier than picturing a binary family of algebras.

\begin{definition}
  A \emph{unary $\LA$-constrained space} is a triple
\begin{align*}
  \X \assign \langle X, (\A_{I})_{I \subseteq_{1} X}, \approx_{\X} \rangle
\end{align*}
  that consists~of a topological space $X$, a continuous subdirect family of unary constraints $(\A_{I})_{I \subseteq_{1} X}$ on $X$ and an equivalence relation $\approx_{\X}$ on $X$ closed as a subset of $X^{2}$ such that $x \not\approx_{\X} y$ implies that there are $a \in \A_{x}$ and $b \in \A_{y}$ with $a \neq b$.
\end{definition}

  Note that the condition ``$x \not\approx_{\X} y$ implies that there are $a \in \A_{x}$ and $b \in \A_{y}$ with $a \neq b$'' is automatically satisfied when $\LA$ has at least two distinct constant terms.

\begin{definition}
  A unary $\LA$-constrained space $\X$ is \emph{separated} if $\approx_{\X}$ is the equality relation on~$X$.
\end{definition}

\begin{definition}
  A \emph{(continuous) $\LA$-constrained map} $\phi\colon \X \to \Y$ between unary $\LA$-constrained spaces $\X$ and $\Y$ is a (continuous) map $\phi\colon X \to Y$ such that for each $I \subseteq_{1} X$
\begin{align*}
  g \in \alg{B}_{\phi[I]} \implies g \circ \restrict{\phi}{I} \in \A_{I},
\end{align*}
  and moreover for each $x, y \in \X$
\begin{align*}
  x \approx_{\X} y \implies \phi(x) \approx_{\Y} \phi(y).
\end{align*}
\end{definition}

\begin{remark} \label{l:approx-is-closed}
  One could make the definition of $k$-ary $\LA$-constrained spaces uniform for all $k$. To do so, one would equip a $k$-ary $\LA$-constrained space $\X$ for $k \geq 2$ with the equivalence relation $\approx_{\X}$ defined by $x \approx_{\X} y$ if and only if for all $f \in \A_{\{x,y\}}$ we have $f_x = f_y$.
Note that $\approx_{\X}$ is indeed a closed relation: $x \not\approx_{\X} y$ for $x, y \in \X$ if and only if there are distinct $a, b \in \LA$ such that $\pair{a}{b} \in \A_{x, y}$, which means that $\approx_{\X}$ is the complement of the open subset $\bigcup \set{X_{a, b}}{a, b \in \LA \text{ with } a \neq b}$ of $X^{2}$.
\end{remark}

\begin{fact} \label{fact: binary equals unary}
  Suppose that $\LA \times \LA$ has only subdiagonal and product subalgebras. Then the category of (separated) unary $\LA$-constrained spaces and unary $\LA$-constrained maps is isomorphic to the category of (separated) binary $\LA$-constrained spaces and binary $\LA$-constrained maps, taking in one direction
\begin{align*}
  \A_{x, y} \assign \begin{cases} \A_{x} \times \A_{y} \text{ if } x \not\approx y, \\ \DiagAlg_{\A_{x}} \text{ if } x \approx y, \end{cases}
\end{align*}
  and in the other direction
\begin{align*}
  x \approx y \iff \A_{x} = \A_{y} \text{ and } \A_{x, y} = \DiagAlg_{\A_{x}}.
\end{align*}
\end{fact}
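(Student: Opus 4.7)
The plan is to define functors $F$ (unary-to-binary) and $G$ (binary-to-unary) using the formulas in the statement and to show they are mutually inverse. I will let $F$ send a unary $\LA$-constrained space $\langle X, (\alg{A}_I)_{I \subseteq_{1} X}, \approx_{\X}\rangle$ to the binary space with constraints given by the displayed formula, and $G$ send a binary space $\langle X, (\alg{A}_I)_{I \subseteq_{2} X}\rangle$ to the unary space with $\alg{A}_x \assign \alg{A}_{\{x\}}$ and $\approx_{\Y}$ defined by the displayed formula. On the underlying continuous maps both functors act as the identity.

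To establish well-definedness of $F$, I would reduce subdirectness of the resulting binary family to $\pi_x[\alg{A}_{x,y}] = \alg{A}_x$: immediate when $\alg{A}_{x,y} = \alg{A}_x \times \alg{A}_y$, and when $x \approx_{\X} y$ using the equality $\alg{A}_x = \alg{A}_y$ (which is part of the data of the unary $\LA$-constrained spaces compatible with this correspondence) so that $\pi_x[\DiagAlg_{\alg{A}_x}] = \alg{A}_x$, and likewise for $\pi_y$. For continuity I would compute $X_{a,b} = (X^2 \setminus {\approx_{\X}}) \cap (X_a \times X_b)$ when $a \neq b$, which is open because $\approx_{\X}$ is closed and the unary family is continuous, and $X_{a,a} = X_a \times X_a$. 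For well-definedness of $G$, the unary family is subdirect trivially; each $X_a$ is open because $X_a = \bigcup_{b \in \LA} \Delta^{-1}[X_{a,b}]$ with $\Delta \colon X \to X^2$ the diagonal; reflexivity of $\approx_{\Y}$ uses the automatic identity $\alg{A}_{x,x} = \DiagAlg_{\alg{A}_x}$ coming from the subset-based presentation of constraints, symmetry is immediate, and closedness matches the complement-of-opens description in Remark~\ref{l:approx-is-closed}. The round trip $G \circ F$ then recovers the original unary data because $\pi_x[\DiagAlg_{\alg{A}_x}] = \alg{A}_x$ and the formula for $\approx_{\Y}$ directly recovers $\approx_{\X}$. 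The round trip $F \circ G$ is the identity thanks to the hypothesis on $\LA \times \LA$: every binary constraint $\alg{A}_{x,y}$ is either a product (forcing $\alg{A}_{x,y} = \alg{A}_x \times \alg{A}_y$ by subdirectness) or a subdiagonal $\DiagAlg_{\C}$ (forcing $\C = \alg{A}_x = \alg{A}_y$ by subdirectness, hence $\alg{A}_{x,y} = \DiagAlg_{\alg{A}_x}$ and $x \approx_{\Y} y$ by the definition of $\approx_{\Y}$).

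For compatibility with morphisms, I would check that a continuous map $\phi \colon X \to Y$ satisfies the unary compatibility condition with respect to the $G$-image iff it satisfies the binary one with respect to the $F$-image. The $|I| = 1$ case is precisely unary compatibility, and the $|I| = 2$ case splits into subcases according to whether $\phi(x_1) = \phi(x_2)$ and whether $\phi(x_1) \approx \phi(x_2)$: in each subcase the binary membership condition (in a product or a subdiagonal on $\phi[I]$) translates into the conjunction of unary compatibility and preservation of $\approx$. The separated case is then handled by noting that a unary space is separated iff $\approx_{\X}$ is equality and a binary space is separated iff $\approx_{\Y}$ (as read off the formula) is equality, so the functors restrict. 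The main obstacle I anticipate is verifying transitivity of the derived $\approx_{\Y}$ in the binary-to-unary direction: given $x \approx_{\Y} y \approx_{\Y} z$, one needs $\alg{A}_{x,z} = \DiagAlg_{\alg{A}_x}$, which by the hypothesis on $\LA \times \LA$ amounts to ruling out the product alternative. This is automatic in the separated case (where $\approx_{\Y}$ is equality) but in general must be taken as part of the structure of binary $\LA$-constrained spaces, since a binary constraint family carries no three-point coherence on its own; this is consistent with Remark~\ref{l:approx-is-closed} already asserting that $\approx_{\X}$ is an equivalence relation.
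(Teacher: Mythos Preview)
Your approach is essentially the same as the paper's: both define the two constructions via the displayed formulas, verify well-definedness (subdirectness and continuity, with the same case split on $a = b$ versus $a \neq b$ for openness of $X_{a,b}$), check that the round trips are identities using the hypothesis on subalgebras of $\LA \times \LA$, and show morphisms coincide via a case split on $x \approx y$. One small omission in your outline for $G$: you should also verify the unary-space axiom that $x \not\approx y$ implies the existence of $a \in \alg{A}_x$, $b \in \alg{A}_y$ with $a \neq b$; the paper handles this by observing that if $\alg{A}_{x,y}$ is not a subdiagonal then some $\pair{a}{b} \in \alg{A}_{x,y}$ has $a \neq b$, and projecting.

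Your concern about transitivity of the derived $\approx$ is well-placed and worth flagging: the paper does not prove it either (it simply cites Remark~\ref{l:approx-is-closed}, which asserts without argument that $\approx$ is an equivalence relation), and indeed a binary subdirect family imposes no coherence between $\alg{A}_{x,y}$, $\alg{A}_{y,z}$, and $\alg{A}_{x,z}$. So the unparenthesised (non-separated) version of the statement has a genuine gap in both your proposal and the paper. The parenthesised separated version, which is the only case invoked downstream (Theorem~\ref{thm: nu duality unary case}), is unaffected since $\approx$ is then equality.
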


\begin{proof}
  This construction applied to a binary $\LA$-constrained space $\X$ indeed yields a unary $\LA$-constrained space. It was already observed in~\cref{l:approx-is-closed} that $\approx_{\X}$ is closed. Moreover, $x \not\approx_{\X} y$ implies that $\alg{A}_{x, y}$ is not a subdiagonal of $\LA^{2}$, hence there is some $\pair{a}{b} \in \A_{x, y}$ with $a \neq b$. But then $a \in \A_{x}$ and $b \in \A_{y}$.

  Conversely, this construction applied to a unary $\LA$-constrained space $\X$ yields a binary $\LA$-constrained space. The subdirectness of the family of constraints $\A_{x, y}$ is immediate from its definition. If $x \approx y$ for $x, y \in \X$, then $\A_{\{ x, y \}} \leq \LA^{\{ x, y \}}$ is the diagonal subalgebra, so indeed $\A_{\{ x, y\}} \leq \Cont(\{ x, y \}, \LA)$. If $x \not\approx y$, then the subspace $\{ x, y \}$ is discrete (since $\{\pair{z}{z'} \in X^2 \mid z \not\approx z'\}$ is open), and therefore again $\A_{\{ x, y \}} \leq \Cont(\{ x, y \}, \LA)$. Finally, to prove that $\A_{x, y}$ is a continuous family of constraints, it remains to show that, for all $a,b \in \LA$, the subset $X_{\langle a, b \rangle} = \set{\langle x, y \rangle \in X^{2}}{\langle a, b \rangle \in \A_{x, y}}$ of $X^2$ is open. Let $a, b \in \LA$. If $a \neq b$, we have $\langle x, y \rangle \in X_{\langle a, b \rangle}$ if and only if $x \not\approx y$ and $a \in \A_{x}$ and $b \in \A_{y}$. On the other hand, if $a = b$ we have $\langle x, y \rangle \in X_{\langle a, b \rangle}$ if and only if $a \in \A_{x}$ and $b \in \A_{y}$. Because $\approx$ is closed in $X^2$ and $X_{a}$ and $X_{b}$ are open, in both cases $X_{\langle a, b \rangle}$ is open. 

  The fact that the above constructions are inverse, i.e.\ that applying one after the other to a unary or a binary $\LA$-constrained space $\X$ again yields $\X$, is immediate (using the fact that $\LA \times \LA$ has only subdiagonal and product subalgebras), as is the fact that these constructions preserve the property of being separated.

  It only remains to observe that the continuous $\LA$-constrained maps between the unary $\LA$-constrained spaces are the same functions as the continuous $\LA$-constrained maps between the corresponding binary $\LA$-constrained spaces. Clearly if $\phi$ is an $\LA$-constrained as a map between binary spaces, then it is $\LA$-constrained as a map between unary spaces, since the construction does not change the (at most) unary constraints. Conversely, suppose that $\phi\colon \X \to \Y$ is $\LA$-constrained as a map between unary spaces. We show that it is $\LA$-constrained as a map between binary spaces. To prove this, consider $\{ x, y \} \subseteq X$ and $g \in \B_{\{ \phi(x), \phi(y) \}}$. Then $(g \circ \phi)_{x} \in \A_{x}$ and $(g \circ \phi)_{y} \in \A_{y}$ because $\phi$ is $\LA$-constrained as a map between unary spaces. If $x \not\approx y$, it follows that $\langle (g \circ \phi)_{x}, (g \circ \phi)_{y} \rangle \in \A_{x, y} = \A_{x} \times \A_{y}$. On the other hand, if $x \approx y$, then $\phi(x) \approx \phi(y)$, so $g_{\phi(x)} = g_{\phi(y)}$ and again $\langle (g \circ \phi)_{x}, (g \circ \phi)_{y} \rangle \in \A_{x, y} = \DiagAlg_{\A_{x}}$.
\end{proof}

\begin{lemma} \hfill
	\begin{enumerateroman}
		\item \label{i:consep-is-Haus} 
		Each separated $k$-ary $\LA$-constrained space is Hausdorff.
		\item \label{i:consep-of-finite-is-discrete}
		Each finite separated $k$-ary $\LA$-constrained space is discrete.
	\end{enumerateroman}
\end{lemma}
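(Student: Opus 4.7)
My plan is to exploit Remark~\ref{l:approx-is-closed} directly: the equivalence relation $\approx_{\X}$ on $X$ defined there, where $x \approx_{\X} y$ iff $f_{x} = f_{y}$ for every $f \in \A_{\{x,y\}}$, is already known to be closed as a subset of $X \times X$. Separation of $\X$ says precisely that for each distinct pair $x \neq y$ there exists $\pair{a}{b} \in \A_{x,y}$ with $a \neq b$; this is exactly the statement that $x \not\approx_{\X} y$ whenever $x \neq y$. So in a separated $\X$, the relation $\approx_{\X}$ coincides with the diagonal $\Delta_{X} \subseteq X \times X$, and hence $\Delta_{X}$ is closed, which is the definition of Hausdorffness. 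This settles (i).

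For (ii), once (i) is in hand the argument is entirely topological: a finite Hausdorff space is discrete, because every singleton is closed (Hausdorffness implies $T_{1}$), so every finite subset is closed, so every subset of a finite space is closed, hence every subset is also open.

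The only thing to double-check is that the description of $\approx_{\X}$ used in Remark~\ref{l:approx-is-closed} really matches the definition of separation via constraints $\A_{x,y}$: for $x \neq y$, a function $f \in \A_{\{x,y\}}$ corresponds under the standard identification to a pair $\pair{f_{x}}{f_{y}} \in \A_{x,y}$, so the existence of $f \in \A_{\{x,y\}}$ separating $x$ and $y$ is indeed the existence of $\pair{a}{b} \in \A_{x,y}$ with $a \neq b$. No obstacle arises; the whole argument is a two-line application of the preceding remark, so there is no "hard part" to isolate.
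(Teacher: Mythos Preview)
Your proof is correct and follows essentially the same approach as the paper: the paper's proof is the two-line observation that $\approx_{\X}$ is closed (by definition in the unary case, by Remark~\ref{l:approx-is-closed} for $k \geq 2$) and equals the diagonal when $\X$ is separated, hence $X$ is Hausdorff, with (ii) following as an immediate consequence.
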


\begin{proof}
	The relation $\approx_{\X}$ is closed (also in the case $k \geq 2$, by Remark~\ref{l:approx-is-closed}), and in the separated case $\approx_{\X}$ is the equality relation; this proves \eqref{i:consep-is-Haus}. Claim~\eqref{i:consep-of-finite-is-discrete} is an immediate consequence of \eqref{i:consep-is-Haus}.
\end{proof}

\begin{example} \label{ex: L-sets for L=2}
  Consider the case of $\LA \assign \Dtwo$ (the two-element bounded distributive lattice $0 < 1$). Then $\LA^{2}$ has exactly four subalgebras: the full algebra $\TotAlg \assign \Dtwo^{2}$ and its three proper subalgebras:
\begin{align*}
  & \DiagAlg \assign \{ \pair{0}{0}, \pair{1}{1} \}, & & \LeftAlg \assign \{ \pair{0}{0}, \pair{0}{1}, \pair{1}{1} \}, & & \RightAlg \assign \{ \pair{0}{0}, \pair{1}{0}, \pair{1}{1} \}.
\end{align*}
  Subdirect families of constraints on a set $X$ are precisely the families $(\A_{x, y})_{x, y \in X}$ that satisfy the conditions
\begin{align*}
  & \A_{x, x} = \DiagAlg, & & \A_{x, y} = \LeftAlg \iff \A_{y, x} = \RightAlg, & & \A_{x, y} = \TotAlg \iff \A_{y, x} = \TotAlg.
\end{align*}
  The first condition holds because $\Dtwo$ has no proper subalgebras and thus $\A_{x} = \Dtwo$ for each $x \in X$. The other two conditions hold because
\begin{align*}
  \pair{a}{b} \in \A_{x, y} \iff \pair{b}{a} \in \A_{y, x}.
\end{align*}
  Together, these conditions tell us that subdirect families of constraints for $\LA \assign \Dtwo$ correspond precisely to reflexive binary relations $\leqq$ on $X$ via the equivalence
\begin{align*}
  x \leqq y \iff \pair{1}{0} \notin \A_{x, y}.
\end{align*}
  More explicitly,
\begin{enumerateroman}
\item $\A_{x, y} = \DiagAlg$ if and only if $x \leqq y$ and $y \leqq x$,
\item $\A_{x, y} = \LeftAlg$ if and only if $x \leqq y$ and $y \nleqq x$,
\item $\A_{x, y} = \RightAlg$ if and only if $x \nleqq y$ and $y \leqq x$,
\item $\A_{x, y} = \TotAlg$ if and only if $x \nleqq y$ and $y \nleqq x$.
\end{enumerateroman}
  The continuous subdirect families of constraints on a topological space $X$ then correspond precisely to the closed reflexive binary relations $\leqq$ on $X$, i.e.\ the closed subsets of $X \times X$ containing the diagonal. Being separated corresponds to anti\-symmetry: if $x \leqq y$ and $y \leqq x$, then $x = y$.
\end{example}

\subsection{\texorpdfstring{Relating $\LA$-spaces and $\LA$-constrained spaces}{Relating L-spaces and L-constrained spaces}}

  Each $\LA$-constrained space $\X$ naturally induces an $\LA$-space which consists of the continuous functions on $X$ compatible with the constraints of $\X$.

\begin{definition}
\label{def: compatible functions}
	Consider a ($k$-ary) $\LA$-constrained space $\X$ and a subset $J \subseteq X$, for $k \geq 2$. An $\LA$-valued function $f \colon {J \to \LA}$ is \emph{compatible with a constraint} $\A_{I}$ for $I \subseteq_{k} J$ if ${\restrict{f}{I} \in \A_{I}}$. It is a \emph{compatible local function} if it is compatible with $\alg{A}_{I}$ for each $I \subseteq_{k} J$. In case $I = X$, we call it a \emph{compatible global function on $\X$}, or simply a \emph{compatible function on $\X$}.
\end{definition}

\begin{remark}
  In the framework of unary $\LA$-constrained spaces, we further require that a compatible local function $f$ satisfies $f_x = f_y$ whenever $x \approx y$.
\end{remark}

\begin{remark}
	For each $I \subseteq_{k} X$
	\begin{align*}
			f\colon I \to \LA \text{ is a compatible local function} \iff f \in \A_{I}.
		\end{align*}
\end{remark}

\begin{example}
  In the case of $\LA \assign \Dtwo$, a function $f\colon J \to \LA$ is compatible if and only if $x \leqq y$ implies $f_{x} \leq f_{y}$ for each $x, y \in J$, i.e.\ if it is order-preserving.
\end{example}

\begin{definition} \label{def: ccomp}
  Given an $\LA$-constrained space $\X$, the continuous compatible $\LA$-valued functions on $\X$ form an algebra $\CComp \X \leq \Cont(X, \LA)$. Endowing the space $X$ with this algebra yields the $\LA$-space $\Func \X \assign \langle X, \CComp \X \rangle$.
\end{definition}

  The assignment $\X \mapsto \CComp \X$ extends to a functor $\CComp \CLSpa^{\op} \to \Alg$ that to each $\LA$-constrained map $\phi\colon \X \to Y$ assigns the homomorphism
\begin{align*}
  \CComp \phi\colon \CComp \Y & \longto \CComp \X \\
  g & \longmapsto g \circ \phi.
\end{align*}
  As with $\LA$-maps, the definition of an $\LA$-constrained map is devised precisely so that $\CComp \phi$ is well-defined. It follows that the assignment $\X \mapsto \Func \X$ also extends to a functor $\Func\colon \CLSpa \to \LSpa$ that takes $\Func \phi \assign \phi$.

  Conversely, each $\LA$-space $\X$ induces an $\LA$-constrained space with the global extension property $\Cons \X$ that consists of the space $X$ and the constraints
\begin{align*}
  & \A_{I} \assign \pi_{I}[\Comp \X] \text{ for } I \subseteq_{k} X.
\end{align*}
  That is, $\A_{I} \leq \LA^{I}$ consists precisely of the continuous functions ${g\colon I \to \LA}$ that extend to some $f \in \Comp \X$.
  if $k = 1$, we further equip $\Cons \X$ with the equivalence relation defined by $x \approx y$ if and only if for all $f \in \Comp \X$ $f_x = f_y$. Moreover, each $\LA$-map $\phi\colon \X \to \Y$ is an $\LA$-constrained map $\Cons \phi \assign \phi\colon \Cons \X \to \Cons \Y$. This yields a functor $\Cons\colon \LSpa \to \CLSpa$.

  The characteristic property of $\LA$-constrained spaces induced in this way from some $\LA$-space is the following.

\begin{definition}
  For $k \geq 2$, a $k$-ary $\LA$-constrained space $\X$ enjoys the \emph{global extension property} if $\A_{I} = \pi_{I}[\CComp \X]$ for each $I \subseteq_{k} X$, i.e.\ for each $g \in \A_{I}$ there is $f \in \CComp \X$ such that $g = \restrict{f}{I}$.
\end{definition}

\begin{remark}
  In the framework of unary $\LA$-constrained spaces, the global extension property further requires that, if $x \not\approx_{\X} y$ for $x, y \in \X$, then there is some $f \in \CComp \X$ such that $f_x \neq f_y$.
\end{remark}

  Observe that each function $f \in \CComp \X$ is determined by specifying its finite range $a_{1}, \dots, a_{n} \in \LA$ and a corresponding tuple of clopens $X_{i} \assign f^{-1}[ \{ a_{i} \}]$, which constitute a clopen decomposition of $\X$. Therefore, postulating the existence of a global extension $f$ is equivalent to postulating the existence of a finite tuple of values in $\LA$ and clopen subsets of $X$ satisfying suitable constraints. In particular, we shall see that the global extension property corresponds to the Priestley separation property in case $\LA \assign \Dtwo$.

\begin{remark} \label{r:simple-if-global}
	The $k$-ary $\LA$-constrained spaces with the global extension property for $k \geq 2$ are precisely the topological spaces $X$ with a family of constraints $\alg{A}_{I} \leq \LA^{I}$ for $I \subseteq_{k} X$ such that each $g \in \A_{I}$ extends to some continuous function $f$ that is compatible with all constraints. The other required properties of $\alg{A}_{I}$ in an $\LA$-constrained space follow from this: such a family is clearly subdirect and the set $X_{\tuple{a}} \assign \set{\tuple{x} \in X^{k}}{\tuple{a} \in \A_{\tuple{x}}}$ is then open for all $\tuple{a} = {\langle a_{1}, \dots, a_{k} \rangle \in \LA^{k}}$. To see this, consider some $\tuple{x} = \langle x_{1}, \dots, x_{k} \rangle \in X^{k}$ in this set and let $I \assign \{ x_{1}, \dots, x_{k} \}$. By the global extension property there is some continuous $f \in \Comp \X$ such that $\restrict{f}{I}\colon x_{1} \mapsto a_{1}, \dots, x_{k} \mapsto a_{k}$. Thus $f^{-1}[\{a_{1}\}] \times \dots \times f^{-1}[\{a_{k}\}]$ is an open neighborhood of $\tuple{x}$ in $X^{k}$ contained in~$X_{\tuple{a}}$.
\end{remark}

  Assuming the Baker--Pixley property, the global extension property precisely characterizes the compact $\LA$-constrained spaces which arise from an $\LA$-space.

\begin{definition}
  Compact separated $k$-ary $\LA$-constrained spaces with the global extension property will be called \emph{$k$-ary $\LA$-Priestley spaces}.
\end{definition}

\begin{theorem}[Baker--Pixley Representation Theorem] \label{thm: bp representation}
  Suppose that $\LA$ has the $k$-ary Baker--Pixley property for ${k \geq 1}$.
  Then the functors $\Cons$ and $\Func$ form an isomorphism between the categories of compact $\LA$-spaces and of compact $k$-ary $\LA$-constrained spaces with the global extension property, which restricts to an isomorphism between the categories of compact separated $\LA$-spaces and of $k$-ary $\LA$-Priestley spaces.
\end{theorem}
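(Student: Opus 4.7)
My plan is to verify that $\Cons$ and $\Func$ act as the identity on underlying topological spaces and on underlying maps, so the statement of the theorem reduces to two object-level identities, $\Cons \circ \Func = \idmap$ on compact $k$-ary $\LA$-constrained spaces with the global extension property and $\Func \circ \Cons = \idmap$ on compact $\LA$-spaces. Before doing so, I note that the required restrictions are well-defined: both functors preserve the underlying space (and hence compactness), and $\Cons \X$ enjoys the global extension property by construction, since the constraint at $I$ is defined to be $\pi_{I}[\Comp \X]$ and every $f \in \Comp \X$ is already continuous and compatible in $\Cons \X$.

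The identity $\Cons \circ \Func = \idmap$ is essentially a restatement of the global extension property. Given a compact $\LA$-constrained space $\Y$ with this property, the underlying topological space (and, in the unary case, the relation $\approx_{\Y}$) is left unchanged by either functor, so the only content is that the constraints agree. The constraint at $I \subseteq_{k} Y$ in $\Cons(\Func \Y)$ is $\pi_{I}[\CComp \Y]$; the inclusion $\pi_{I}[\CComp \Y] \subseteq \A_{I}$ holds by the very definition of a continuous compatible function, and the reverse inclusion is exactly the global extension property.

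The main content lies in $\Func \circ \Cons = \idmap$, and this is where the Baker--Pixley hypothesis enters. Given a compact $\LA$-space $\X$ I need $\CComp(\Cons \X) = \Comp \X$. The inclusion $\Comp \X \subseteq \CComp(\Cons \X)$ is immediate: every $f \in \Comp \X$ is continuous, and $\restrict{f}{I} \in \pi_{I}[\Comp \X] = (\Cons \X)_{I}$ for each $I \subseteq_{k} X$. For the reverse inclusion, let $f \in \Cont(X, \LA)$ be a continuous compatible function on $\Cons \X$; compatibility says precisely that $f$ is $k$-interpolated by $\Comp \X$, so for $k \geq 2$ the compact Baker--Pixley property in the separation-free form of Remark~\ref{r:no-need-for-separation} yields $f \in \Comp \X$. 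For $k = 1$, I observe that compatibility with the equivalence relation $\approx_{\Cons \X}$ amounts exactly to $f$ separating at most as much as $\Comp \X$, so the refined unary version of the Baker--Pixley property again gives $f \in \Comp \X$. I expect this step to be the main technical obstacle; the rest of the argument is unpacking of definitions.

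It remains to check that a map $\phi\colon X \to Y$ of underlying spaces is a continuous $\LA$-map between the relevant $\LA$-spaces if and only if it is a continuous $\LA$-constrained map between the corresponding constrained spaces, and that the restrictions to separated objects match up. Both directions of the first equivalence are direct calculations with constraints of the form $\pi_{I}[\Comp \X]$: if $\phi$ is an $\LA$-map and $g = \restrict{h}{\phi[I]}$ with $h \in \Comp \Y$, then $g \circ \restrict{\phi}{I} = \restrict{(h \circ \phi)}{I} \in \pi_{I}[\Comp \X]$; conversely, if $\phi$ is $\LA$-constrained and $g \in \CComp \Y'$, then $\restrict{(g \circ \phi)}{I} = \restrict{g}{\phi[I]} \circ \restrict{\phi}{I} \in \A_{I}$ for each $I \subseteq_{k} Y$, so $g \circ \phi \in \CComp \Y$. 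The restriction to the separated case is transparent: in both the binary and the unary setup an $\LA$-space $\X$ is separated precisely when $\Cons \X$ is separated as an $\LA$-constrained space, which is the remaining condition defining $\LA$-Priestley spaces.
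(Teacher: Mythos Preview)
Your proof is correct and follows essentially the same approach as the paper: the global extension property gives $\Cons \circ \Func = \idmap$, the compact Baker--Pixley property gives $\Func \circ \Cons = \idmap$, and the morphism correspondence is checked directly (with the $k=1$ case handled via the ``separates at most as much as'' refinement). The only blemish is the notation in your converse morphism argument: you write $g \in \CComp \Y'$, $I \subseteq_{k} Y$, and conclude $g \circ \phi \in \CComp \Y$, where presumably you mean $g \in \Comp \Y$, $I \subseteq_{k} X$, and $g \circ \phi \in \CComp(\Cons \X) = \Comp \X$ (the last equality using the already-established $\Func \circ \Cons = \idmap$, which is where Baker--Pixley enters this step, exactly as in the paper).
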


\begin{proof}
  The global extension property for an $\LA$-constrained space $\X$ ensures that $\Cons \Func \X = \X$. Conversely, the (compact) $k$-ary Baker--Pixley property on a compact $\LA$-space $\Y$ ensures that $\Func \Cons \Y = \Y$. Clearly $\Cons \Y$ is separated as an $\LA$-constrained space if $\Y$ is separated as an $\LA$-space, and conversely $\Func \X$ is separated as an $\LA$-space if $\X$ has the global extension property and is separated as an $\LA$-constrained space.

  It only remains to show that $\CLSpa(\Cons \X, \Cons \Y) = \LSpa(\X, \Y)$ for compact $\LA$-spaces $\X$ and $\Y$. Each $\LA$-map $\phi\colon \X \to \Y$ is clearly an $\LA$-constrained map $\phi\colon \Cons \X \to \Cons \Y$. Conversely, consider a continuous $\LA$-constrained map $\phi\colon \Cons \X \to \Cons Y$. To prove that it is an $\LA$-map, consider $g \in \Comp \Y$. Then for each $I \subseteq_{k} X$ we have $\restrict{g}{\phi[I]} \circ \restrict{\phi}{I} = \restrict{f}{I}$ for some $f \in \Comp \X$. But $\restrict{g}{\phi[I]} \circ \restrict{\phi}{I} = \restrict{(g \circ \phi)}{I}$, and so the continuous $\LA$-valued function $g \circ \phi$ on $\X$ is $k$-interpolated by $\Comp \X$.
  (Note also that, if $k = 1$, then for all $x_1, x_2 \in X$ with $(g\circ \phi)(x_1) \neq (g \circ \phi)(x_2)$ we have $\phi(x_1) \not\approx_\Y \phi(x_2)$, and therefore $x_1 \not\approx_\X x_2$, and hence $h(x_1) \neq h(x_2)$ for some $h \in \Comp \X$.
  Therefore, $g \circ \phi$ separates at most as much as $\Comp \X$.)
  The Baker--Pixley property now implies that $g \circ \phi \in \Comp \X$.
\end{proof}

\begin{example} \label{example: priestley}
  In the case of $\LA \assign \Dtwo$ and $k \assign 2$, the global extension property states that ${\leqq}$ is a pre-order and for each $x \nleqq y$ there is a clopen upset $U$ with $x \in U$ and $y \notin U$. This is precisely the Priestley separation property. Recalling that separation corresponds to the antisymmetry of ${\leqq}$, the compact separated $\LA$-spaces are therefore precisely Priestley spaces.
\end{example}

\subsection{The NU Duality Theorem}

Let us now summarize the situation so far. 

	Suppose that $\LA$ is a nontrivial algebra with only trivial partial endomorphisms and without the empty subalgebra. If finitely valued $\LA$-algebras are relatively congruence distributive with respect to some prevariety containing $\LA$, then the CD Duality Theorem (Theorem~\ref{thm: cd duality}) provides the following categorical duality:
\begin{gather*}
  \text{finitely valued $\LA$-algebras} \\ \mathrel{\cong^{\op}} \\ \text{compact separated $\LA$-spaces}.
\end{gather*}
	
	 The Baker--Pixley Representation Theorem (Theorem~\ref{thm: bp representation}) in turn yields the following categorical equivalence for any algebra $\LA$ with the $k$-ary Baker--Pixley property for $k \geq 1$:
	\begin{gather*}
	  \text{compact separated $\LA$-spaces} \\ \cong \\ \text{$\LA$-Priestley spaces} \\ \text{( = compact separated $k$-ary $\LA$-constrained spaces} \\ \text{with the global extension property).}
	\end{gather*}

  Finally, for $k \geq 2$, the presence of a near unanimity term of arity $k+1$ on an algebra $\LA$ ensures that $\HOp\SOp\POp(\LA)$ is a congruence distributive variety (Theorem~\ref{thm: NU implies CD}) and that $\LA$ has the $k$-ary Baker--Pixley property (Theorem~\ref{thm: bp and nu}). Of course, the variety $\HOp \SOp \POp(\LA)$ contains all finitely valued $\LA$-algebras.

  Putting all of this together now yields the following theorem. 

\begin{theorem}[NU Duality Theorem for finitely valued $\LA$-algebras] \label{thm: nu duality}
  Let $\LA$ be a nontrivial algebra with only trivial partial endomorphisms and without the empty subalgebra. If $\LA$ has a $(k+1)$-ary near unanimity term for $k \geq 2$, then there is a dual equivalence between the category of finitely valued $\LA$-algebras and the category of $k$-ary $\LA$-Priestley spaces (that is, compact separated $k$-ary $\LA$-constrained spaces with the global extension property).
\end{theorem}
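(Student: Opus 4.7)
The plan is to assemble this theorem by composing the two main machinery results established earlier in the paper, with two routine verifications to check that the hypotheses of each machine are satisfied.

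First I would check that the hypothesis of the CD Duality Theorem (\cref{thm: cd duality}) is met, namely that every finitely valued $\LA$-algebra is $\class{K}$-relatively congruence distributive for some prevariety $\class{K}$ containing $\LA$. By \cref{thm: NU implies CD}, a $(k+1)$-ary near unanimity term on $\LA$ forces the variety $\VOp(\LA)$ to be congruence distributive. Take $\class{K} \assign \VOp(\LA)$, which is a prevariety containing $\LA$. Every finitely valued $\LA$-algebra belongs to $\Alg \subseteq \VOp(\LA) = \class{K}$, and since $\class{K}$ itself is congruence distributive, every algebra in $\class{K}$ is $\class{K}$-relatively congruence distributive (the $\class{K}$-relative congruence lattice is a sublattice of the full congruence lattice). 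Together with the assumptions on $\LA$ (nontrivial, only trivial partial endomorphisms, no empty subalgebra), the CD Duality Theorem then yields a dual equivalence between finitely valued $\LA$-algebras and compact separated $\LA$-spaces, implemented by $\Spec$ and $\Comp$.

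Next I would verify the hypothesis of the Baker--Pixley Representation Theorem (\cref{thm: bp representation}), namely that $\LA$ has the $k$-ary Baker--Pixley property. Since $k \geq 2$, this is immediate from \cref{thm: bp and nu}: a $(k+1)$-ary near unanimity term on $\LA$ yields the finite $k$-ary Baker--Pixley property, which is equivalent to the compact and ultrafilter versions by \cref{thm: bp equivalence}. The Baker--Pixley Representation Theorem then gives a categorical isomorphism, implemented by $\Cons$ and $\Func$, between compact separated $\LA$-spaces and $k$-ary $\LA$-Priestley spaces (that is, compact separated $k$-ary $\LA$-constrained spaces with the global extension property).

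Composing the two, the functors $\Cons \circ \Spec$ and $\Comp \circ \Func$ furnish the desired dual equivalence between finitely valued $\LA$-algebras and $k$-ary $\LA$-Priestley spaces. There is no genuine obstacle here beyond bookkeeping: both pieces of the duality have already done the heavy lifting (congruence-distributive Jónsson-style arguments on the algebraic side for CD, and the near-unanimity interpolation arguments on the spatial side for Baker--Pixley), and the role of the near unanimity term is precisely to supply both ingredients simultaneously. The only point that deserves attention in the write-up is to record explicitly that $\class{K} \assign \VOp(\LA)$ is an admissible choice of prevariety, so that the CD Duality Theorem applies in its stated form.
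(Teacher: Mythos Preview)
Your proposal is correct and follows essentially the same route as the paper: invoke \cref{thm: NU implies CD} and \cref{thm: bp and nu} to verify the hypotheses of the CD Duality Theorem and the Baker--Pixley Representation Theorem respectively, then compose. One small sharpening: since $\class{K} = \VOp(\LA)$ is a \emph{variety}, every congruence on an algebra in $\class{K}$ is already $\class{K}$-relative, so $\Con_{\class{K}} \A = \Con \A$ outright (not merely a sublattice), which is what makes the relative congruence distributivity immediate.
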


\begin{example}[Priestley duality for finitely valued positive MV-algebras] \label{ex: duality for pmv}
  Let us now work through what the above duality tells us in the case of $\LA \assign \PMVchain$, which we claim to be our concrete motivating example in the introduction. We already verified that the nontrivial algebra $\PMVchain$, which lacks the empty subalgebra, has no partial endomorphisms in Example~\ref{ex: pmv chain has no partial endomorphisms}. Because $\PMVchain$ has a lattice reduct, it has a majority term, so the NU Duality Theorem applies with $k \assign 2$.
  
  An $\LA$-Priestley space for $\LA \assign \PMVchain$ is now a compact space $X$ equipped with a separated family of algebras $\A_{I} \leq \PMVchain^{I}$ for $I \subseteq_{2} X$ satisfying the global extension property. Separatedness means that if $x \neq y$, then $\A_{\{ x, y \}}$ is not a subdiagonal of $\PMVchain^{I}$. The global extension property means that if $(x \mapsto a, y \mapsto b) \in \A_{x, y}$, then there is a continuous compatible function $f\colon X \to \PMVchain$ with $f_{x} = a$ and $f_{y} = b$.

  As in Example~\ref{ex: L-sets for L=2}, which described $\LA$-Priestley spaces in more concrete terms for $\LA \assign \Dtwo$, here too the algebra $\PMVchain^{2}$ has two important subalgebras, namely
\begin{align*}
  \LeftAlg & \assign \set{\pair{a}{b} \in \PMVchain^{2}}{a \leq b}, & \RightAlg & \assign \set{\pair{a}{b} \in \PMVchain^{2}}{a \geq b}.
\end{align*}

  We claim that each subalgebra $\C$ of $\PMVchain^{2}$ is either a product algebra, i.e.\ it has the form $\C_{1} \times \C_{2}$ for some $\C_{1}, \C_{2} \leq \PMVchain$ or a subalgebra of $\LeftAlg$ or $\RightAlg$. To see this, suppose that $\pair{a}{b}, \pair{c}{d} \in \C$ with $a \nleq b$ and $c \ngeq d$. Then by Example~\ref{ex: pmv chain has no partial endomorphisms} there is a unary term $t(x)$ such that $t^{\PMVchain}(a) = 1$ and $t^{\PMVchain}(b) = 0$, and likewise a unary term $u(x)$ such that $u^{\PMVchain}(c) = 0$ and $u^{\PMVchain}(d) = 1$. It follows that $\pair{1}{0} = t^{\PMVchain^{2}}(\pair{a}{b}) \in \C$ and $\pair{0}{1} = u^{\PMVchain^{2}}(\pair{c}{d}) \in \C$. Now take $\C_{1} \assign \pi_{1}[\C]$ and $\C_{2} \assign \pi_{2}[\C]$. Clearly $\C \leq \C_{1} \times \C_{2}$. To prove that $\C_{1} \times \C_{2} \leq \C$, suppose that $p \in \C_{1}$ and $s \in \C_{2}$. Then there are $q, s \in \PMVchain$ with $\pair{p}{q}, \pair{r}{s} \in \C$, and $\pair{p}{s} = (\pair{1}{0} \wedge \pair{p}{q}) \vee (\pair{0}{1} \wedge \pair{r}{s}) \in \C$.

  Switching to the notation $\A_{x, y}$ per the discussion following Remark~\ref{rem: constraints}, we can define a binary relation $\leq$ on $X$ as follows:
\begin{align*}
  x \leq y \iff \A_{x,y} \leq \LeftAlg \iff f_{x} \leq f_{y} \text{ for all compatible functions $f$},
\end{align*}
  where the second equivalence holds by the global extension property. We claim that this is a partial order. It is a reflexive relation because $\A_{x,x}$ is a subdiagonal of $\PMVchain^{2}$. It is antisymmetric because if $x \leq y \leq x$, then $\A_{x, y}$ is a subalgebra of both $\LeftAlg$ and $\RightAlg$, so it is a subdiagonal, and by separation it follows that $x = y$. Finally, it is transitive because if $x \leq y \leq z$ then $f_{x} \leq f_{y} \leq f_{z}$ for all compatible functions $f$ and so $x \leq z$.

  We can therefore recover an $\LA$-Priestley space $\X$ from the following data:
\begin{enumerateroman}
\item the family $(\A_{x})_{x \in X}$ of subalgebras of $\PMVchain$,
\item the partial order $\leq$ on $X$,
\item the family $(\A_{x,y})_{x \leq y}$ of subalgebras of $\LeftAlg$.
\end{enumerateroman}
  The compatible functions of $\X$ are precisely the continuous functions $f\colon X \to \PMVchain$ such that $u \leq v$ in $X$ implies $\pair{f_{u}}{f_{v}} \in \A_{u,v}$.

  Conversely, the above data determines an $\LA$-Priestley space if and only if
\begin{enumerateroman}
\item\label{i:mvp-subdirect} the inclusion $\A_{x, y} \leq \A_{x} \times \A_{y}$ is subdirect for $x \leq y$,
\item\label{i:mvp-diagonal} the algebra $\A_{x, y}$ is a subdiagonal if and only if $x = y$, and
\item\label{i:mvp-extension} for each $\pair{a}{b} \in \A_{x, y}$ there is a continuous compatible (in the above sense) function $f\colon X \to \PMVchain$ such that $f_{x} = a$ and $f_{y} = b$.
\end{enumerateroman}
  Item~\eqref{i:mvp-subdirect} ensures that the families $(\A_{x})_{x \in X}$ and $(\A_{x, y})_{x \leq y}$ determine a subdirect family of binary constraints $(\A_{I})_{I \subseteq_{2} X}$. Item~\eqref{i:mvp-diagonal} ensures that this family is separated. Finally, item~\eqref{i:mvp-extension} ensures the global extension property.

  Let us call a structure satisfying the above three conditions an \emph{MV-Priestley space}. A morphism of MV-Priestley spaces $\phi\colon \X \to \Y$ (whose families of algebras are denoted by $\A$ and $\B$ respectively) will be a continuous order-preserving map $\phi$ such that $\B_{\phi(x)} \leq \A_{x}$ for all $x \in \X$ and $\B_{\phi(x),\phi(y)} \leq \A_{x,y}$ for all $x \leq y$ in $\X$. These are precisely the conditions ensuring that $\phi$ is a continuous $\LA$-map between the associated $\LA$-Priestley spaces.

  The NU Duality Theorem in case $\LA \assign \PMVchain$ thus yields a duality between the category of finitely valued MV-algebras and the category of MV-Priestley spaces.

  In case where $\LA$ is not the full algebra $\PMVchain$ but only a finite subalgebra of $\PMVchain$, i.e.\ in case $\LA$ is the positive MV-algebra reduct of a finite MV-chain, we in effect recover the duality recently 
formulated by Poiger~\cite{Poiger2024}.
\end{example}

\subsection{\texorpdfstring{Duality for $\LA$-constrained spaces: the unary case}{Duality for L-constrained spaces: unary case}}
\label{sec: unary case}

  The global extension property is a brute force principle: it directly postulates that the local constraints fit together globally without saying how or why. In some cases, this is the best we can do. The Priestley separation axiom is, after all, also a brute force principle of precisely this sort which, although clothed in the trappings of clopen upsets, directly postulates the existence of a global continuous compatible function into~$\Dtwo$. In at least two special cases, however, we can do better: in the case in which $\LA$ has the unary Baker--Pixley property and in the case of finite spaces.

  Recall that a separated unary $\LA$-constrained is a topological space $X$ with a continuous subdirect family of unary constraints $(\A_{I})_{I \subseteq_{1} X}$ such that the equality relation is closed (that is, $X$ is Hausdorff) and for all $x \neq y$ in $X$ there are $a \in \A_{x}$ and $b \in \A_{y}$ with $a \neq b$.

\begin{theorem} \label{thm: extension property unary case}
  A compact separated unary $\LA$-constrained space $\X$ has the global extension property if and only if it is topologically a Stone space.
\end{theorem}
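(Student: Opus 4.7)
The plan is to prove the two directions separately, with the real content in the converse.

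For the forward direction, I would invoke the supplementary clause of the global extension property in the unary framework: since $\X$ is separated ($\approx_{\X}$ is equality), for any distinct $x, y \in X$ there is some $f \in \CComp \X$ with $f_x \neq f_y$. Because $\LA$ is topologically discrete, $\{f_x\} \subseteq L$ is clopen, so $f^{-1}[\{f_x\}]$ is a clopen of $X$ containing $x$ but not $y$. Thus clopens separate points in $X$, and combined with compactness this makes $X$ a Stone space.

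For the converse, assume $X$ is Stone. I would prove the following uniform finite extension claim, which immediately implies both clauses of the global extension property: for every finite $F \subseteq X$ and every $g \colon F \to \LA$ with $g(x) \in \A_x$ for all $x \in F$, there is some $f \in \CComp \X$ with $\restrict{f}{F} = g$. The case $|F| = 1$ gives the main clause; for the supplementary separation clause, given $x \neq y$, use separatedness of $\X$ (which yields $a \in \A_x$ and $b \in \A_y$ with $a \neq b$ via the definition of unary $\LA$-constrained space) and apply the claim to $F = \{x, y\}$ with $g(x) = a$, $g(y) = b$. To construct such an $f$, enumerate $F = \{x_1, \dots, x_n\}$ and use Hausdorffness of $X$ to pick pairwise disjoint open neighborhoods $U_i \ni x_i$. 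Continuity of the constraint family (Remark~\ref{r:remark-on-continuity}) makes each $X_{g(x_i)} \assign \set{y \in X}{g(x_i) \in \A_y}$ open, and it contains $x_i$; zero-dimensionality then lets me shrink $U_i \cap X_{g(x_i)}$ to a clopen $V_i$ with $x_i \in V_i \subseteq X_{g(x_i)}$. The complement $K \assign X \setminus (V_1 \cup \dots \cup V_n)$ is clopen and hence compact. The family $\{X_b \cap K\}_{b \in \LA}$ is an open cover of $K$, so compactness yields a finite subcover and zero-dimensionality refines it to a finite clopen partition $W_1, \dots, W_m$ of $K$ with $W_j \subseteq X_{b_j}$ for suitable $b_j \in \LA$. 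Defining $f$ to equal $g(x_i)$ on $V_i$ and $b_j$ on $W_j$ gives a function that is locally constant on a finite clopen partition, hence continuous, with $\restrict{f}{F} = g$ and $f(y) \in \A_y$ everywhere, so $f \in \CComp \X$.

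The main technical subtlety will be ensuring that $\{X_b \cap K\}_{b \in \LA}$ actually covers $K$, i.e.\ that each $\A_y$ is nonempty. In the interesting cases this is forced by the separation hypothesis: if $|X| \geq 2$, for any $y \in X$ pick some $z \neq y$ and apply the separation axiom of unary $\LA$-constrained spaces to obtain a witness in $\A_y$. The degenerate cases ($|X| \leq 1$, or $\A_y = \emptyset$ for some $y$, in which case $\CComp \X$ is already empty and the global extension property fails trivially) need only minor book-keeping, and the theorem is vacuous in them.
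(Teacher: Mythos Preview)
Your proposal is correct and follows essentially the same approach as the paper: both directions rely on the observation that continuous $\LA$-valued functions on a Stone space are exactly the functions which are constant on each piece of some finite clopen partition, and the converse is obtained by building such a partition from the open sets $X_b$ using compactness and zero-dimensionality. Two minor packaging differences are worth noting. First, you prove a single uniform finite-extension claim that subsumes the three cases the paper treats separately ($I = \{x\}$, $I = \emptyset$, and the separation clause $x \not\approx y$); this is slightly more economical. Second, for the nonemptiness of each $\A_y$ the paper uses subdirectness (from one nonempty $\A_x$, conclude $\A_\emptyset$ is nonempty, hence every $\A_y$ is nonempty), whereas you use the separation axiom, which only fires when $|X| \geq 2$; the subdirectness route is more uniform and also covers the $|X|=1$, $F=\emptyset$ corner case that you (correctly) flag as degenerate.
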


\begin{proof}
  Every compact separated $\LA$-constrained space with the global extension property is isomorphic to an $\LA$-constrained space of the form $\Cons \X$ for some compact separated $\LA$-space $\X$ by Theorem~\ref{thm: bp representation}, and therefore it is a Stone space by Lemma~\ref{lemma: properties of separated l-spaces}.
  
  Conversely, consider a compact separated $\LA$-constrained space $\X$ which is Stone. We need to show that it has the global extension property.
    
  Let $x \in X$, let $a \in \A_{x}$, and let us prove that there is some $f \in \CComp \X$ such that $f_{x} = a$. Given $a \in \A_{x}$, the constraint $\alg{A}_{x}$ is non-empty, and so, by subdirectness, $\A_\varnothing$ is nonempty, and so, by subdirectness, every $\alg{A}_{y}$ with $y \in \X$ is non-empty.
  In particular, for each $y \in \X$ we can choose some $a_{y} \in \alg{A}_{y}$, taking $a_{x} \assign a$. Because the family of constraints is continuous, each $y \in \X$ has an open neighborhood $U_{y}$ such that $a_{y} \in \A_{z}$ for all $z \in U_{y}$. Because $\X$ is Stone, we may choose $U_{y}$ clopen. The family $(U_{y})_{y \in \X}$ forms an open cover of $\X$, so by compactness some finite subfamily of $(U_{y})_{y \in \X}$ covers $\X$. We may assume that this subfamily contains $U_{x}$. Because each $U_{y}$ is clopen, we can transform this finite subfamily into a cover of $\X$ by disjoint clopens $V$ such that for each of these clopens $V$ there is some $y \in \X$ with $V \subseteq U_{y}$. We can moreover choose this family of disjoint clopens so that it contains $U_{x}$. Then any function $f$ which takes the constant value $a$ on $U_{x}$ and which on each of the other sets $V$ takes a constant value $a_{y}$ for some $y \in \X$ with $V \subseteq U_{y}$ is the desired $f \in \CComp \X$ with $f_{x} = a$.  This proves that every partial function on a subset of cardinality $1$ can be extended globally.

  Let now $* \in \alg{A}_\varnothing$.
  Then, by subdirectness, for every $x \in \X$ the set $\alg{A}_x$ is nonempty, and we can then play the same trick as above to extend $*$ to some $f \in \CComp \X$. (Alternatively, one can proceed by cases: if $\X = \varnothing$, then $*$ is the desired function in $\CComp \X$, and otherwise one picks a designated $x \in \X$, for which there would exist $a \in \A_x$, and piggybacks on the case above.)
  
  Let now $x \not\approx y$.
  Then, by definition of unary $\LA$-constrained space, there are $a \in \A_{\{x\}}$ and $b \in \A_{\{y\}}$ with $a \neq b$.
  Then we can again play a similar trick to the one above to produce $f \in \CComp \X$ such that $f_x = a$ and $f_y = b$.
  
  This proves the global extension property.
\end{proof}

\begin{remark}
	We give an example of a compact separated unary $\LA$-constrained space that is not topologically a Stone space (equivalently, that lacks the global extension property).
	Take $\LA$ to be the two-element boolean algebra $\{0,1\}$, $X$ to be the unit interval $[0,1]$ with the Euclidean topology, $\A_x \coloneqq \LA$ for each $x \in [0,1]$ (and $\A_\varnothing$ as the singleton $\LA^\varnothing$), with $\approx$ as the equality relation.

	The compatible functions are then the continuous maps $f\colon [0, 1] \to \{ 0, 1 \}$, but each such map is constant.
	Thus, even though for each $x \in [0, 1]$ and $a \in \A_{x}$ there is some continuous compatible $f$ with $f_{x} = a$, the further requirement in the global extension property that $x \not\approx_{\X} y$ implies the existence of some continuous compatible $f$ with $f_{x} \neq f_{y}$ is violated.
\end{remark}

\begin{remark} \label{r:Stone-separated-unary}
	If $\LA$ has two distinct constant terms, then a Stone separated unary $\LA$-constrained space $\X$ is determined by 
	\begin{enumerateroman}
	
		\item
		a Stone space $X$, and
		
		\item
		for each $x \in X$, a subalgebra $\A_x$ of $\LA$
	\end{enumerateroman}
	such that for all $a \in \LA$ the set $\{x \in X \mid a \in \A_x\}$ is open. The constraint $\A_{\emptyset}$ does not need to be specified, since it is the singleton algebra if $\LA$ has a constant term.

	If $\LA$ has a constant but not necessarily two distinct constant terms, we need to further require that for all $x \neq y$ in $X$ there are $a \in \A_x$ and $b \in \A_y$ with $a \neq b$.

  Finally, if the algebra $\LA$ has no constants, then in addition to the openness and separation conditions above, we need to specify
	\begin{enumerateroman}	 \setcounter{enumi}{2}
		\item
		a subalgebra $\A_\varnothing$ of the singleton $\LA^\varnothing$
	\end{enumerateroman}
	such that, for all $x \in X$, $\A_\varnothing = \varnothing$ if and only if $\A_x = \varnothing$.	
	
	A continuous $\LA$-constrained map from $\X$ to $\Y$ then amounts to a continuous map $\phi \colon X \to Y$ such that $\B_{\phi(x)} \subseteq \A_{x}$ for every $x \in X$ and $\B_{\varnothing} \subseteq \A_{\varnothing}$ (a condition we can ignore if $\LA$ has at least a constant symbol). In other words, the map $\phi$ is \emph{constraint-decreasing}. This definition of unary $\LA$-constrained spaces and continuous $\LA$-constrained maps specializes precisely to the definitions of Cignoli and Marra~\cite{CignoliMarra2012} for the case of $\LA \assign [0, 1]$.
\end{remark}

\begin{theorem}[Baker--Pixley Representation Theorem: unary case] \label{t:BP-unary}
  If $\LA$ has the unary Baker--Pixley property, then the category of compact separated $\LA$-spaces is isomorphic to the category of Stone separated unary $\LA$-constrained spaces.
\end{theorem}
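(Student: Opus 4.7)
The plan is to derive this theorem as a direct corollary of the Baker--Pixley Representation Theorem~\ref{thm: bp representation} (applied in the unary case $k=1$) and the characterization of the global extension property for unary $\LA$-constrained spaces provided by Theorem~\ref{thm: extension property unary case}.

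First, I would invoke Theorem~\ref{thm: bp representation} with $k = 1$: since $\LA$ has the unary Baker--Pixley property by hypothesis, the functors $\Cons$ and $\Func$ restrict to a categorical isomorphism between compact separated $\LA$-spaces and unary $\LA$-Priestley spaces, i.e.\ compact separated unary $\LA$-constrained spaces satisfying the global extension property. At this stage, the underlying topological space on the spatial side is only known to be compact Hausdorff (via Lemma~\ref{lemma: properties of separated l-spaces}), not yet Stone.

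Second, I would appeal to Theorem~\ref{thm: extension property unary case}, which asserts that within the class of compact separated unary $\LA$-constrained spaces, the global extension property is equivalent to being topologically a Stone space. Substituting this equivalence into the description of unary $\LA$-Priestley spaces shows that the latter class coincides, as a full subcategory of $\CLSpa$, with the class of Stone separated unary $\LA$-constrained spaces; since the morphisms on both sides are by definition continuous $\LA$-constrained maps, this identification is a genuine categorical isomorphism (in fact, the identity on both objects and morphisms after the identification).

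Composing these two identifications yields the claimed isomorphism between compact separated $\LA$-spaces and Stone separated unary $\LA$-constrained spaces, implemented again by $\Cons$ and $\Func$. There is essentially no obstacle to overcome at this point, beyond keeping track of bookkeeping: unary $\LA$-constrained spaces are by definition equipped with an equivalence relation $\approx_{\X}$ (not derivable from the unary constraints alone), and one should check that separation of an $\LA$-space $\X$ translates under $\Cons$ into $\approx_{\Cons \X}$ being the equality relation (which is precisely what separation of a unary $\LA$-constrained space means), while in the reverse direction, separation of a unary $\LA$-constrained space $\X$ together with the global extension property ensures via Theorem~\ref{thm: extension property unary case} that $\Func \X$ is a separated $\LA$-space. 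All genuine work has already been done in the two theorems being combined.
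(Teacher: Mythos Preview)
Your proposal is correct and follows exactly the same approach as the paper: the paper's proof consists of the single line ``By \cref{thm: bp representation,thm: extension property unary case}'', and your two-step argument unpacks precisely this composition. The additional bookkeeping you mention about how separation translates under $\Cons$ and $\Func$ is already handled in the proof of Theorem~\ref{thm: bp representation} itself.
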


\begin{proof}
	By \cref{thm: bp representation,thm: extension property unary case}.
\end{proof}

  Let us again summarize the situation so far in the unary case.

	Suppose that $\LA$ is a nontrivial algebra with only trivial partial endomorphisms and without the empty subalgebra. If finitely valued $\LA$-algebras are relatively congruence distributive with respect to some prevariety containing $\LA$, then, by Theorem~\ref{thm: cd duality},
\begin{gather*}
  \text{finitely valued $\LA$-algebras} \\ \mathrel{\cong^{\op}} \\ \text{compact separated $\LA$-spaces}.
\end{gather*}
	
	The unary case of the Baker--Pixley Representation Theorem (\cref{t:BP-unary}) yields the following categorical equivalence for any algebra $\LA$ with the unary Baker--Pixley property:
	\begin{gather*}
	  \text{compact separated $\LA$-spaces} \\ \cong \\ \text{Stone separated unary $\LA$-constrained spaces}.
	\end{gather*}
  
  Finally, the conditions that $\LA$ has a majority term and that $\LA \times \LA$ only has subdiagonal or product subalgebras ensure some of the key requirements above, namely that $\HOp \SOp \POp(\LA)$ is a congruence distributive variety (\cref{thm: bp and nu}) and that $\LA$ has the unary Baker--Pixley property (\cref{f:unary-in-terms-of-binary}). Putting all of this together now yields the following theorem. (We refer to \cref{r:Stone-separated-unary} for a simple description of Stone separated unary $\LA$-constrained spaces.)
 
 \begin{theorem}[NU Duality Theorem: the unary case] \label{thm: nu duality unary case}
 	Let $\LA$ be a nontrivial algebra with only trivial partial endomorphisms and without the empty subalgebra.
	If $\LA$ has a majority term and $\LA \times \LA$ only has subdiagonal and product subalgebras, then there is a dual equivalence between the category of finitely valued $\LA$-algebras and the category of Stone separated unary $\LA$-constrained spaces.
 \end{theorem}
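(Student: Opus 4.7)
The plan is to obtain the duality by composing the CD Duality Theorem (Theorem~\ref{thm: cd duality}) with the unary Baker--Pixley Representation Theorem (Theorem~\ref{t:BP-unary}), and for this it suffices to verify the hypotheses of each. All conditions on $\LA$ listed in the statement---nontriviality, only trivial partial endomorphisms, absence of the empty subalgebra---are already assumed, so the remaining work is to check the relative congruence distributivity of finitely valued $\LA$-algebras and the unary Baker--Pixley property of $\LA$.

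To invoke Theorem~\ref{thm: cd duality}, I would first use the fact that a majority term is a ternary near unanimity term, so by Theorem~\ref{thm: NU implies CD} the variety $\VOp(\LA)$ is congruence distributive. Since $\Algfv \subseteq \IOp\SOp\POp(\LA) \subseteq \VOp(\LA)$, every finitely valued $\LA$-algebra is $\VOp(\LA)$-relatively congruence distributive, and $\VOp(\LA)$ is a prevariety containing $\LA$. This is exactly the (parenthetical) hypothesis of Theorem~\ref{thm: cd duality}, which then delivers a dual equivalence between finitely valued $\LA$-algebras and compact separated $\LA$-spaces. To invoke Theorem~\ref{t:BP-unary}, I would use Theorem~\ref{thm: bp and nu} to see that the majority term witnesses the binary Baker--Pixley property on $\LA$, and then Fact~\ref{f:unary-in-terms-of-binary} upgrades this to the unary Baker--Pixley property using the standing hypothesis that $\LA \times \LA$ has only subdiagonal and product subalgebras. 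Theorem~\ref{t:BP-unary} then yields a categorical isomorphism between compact separated $\LA$-spaces and Stone separated unary $\LA$-constrained spaces, and composing the two equivalences produces the desired duality.

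There is no genuine obstacle here, since the heavy lifting has already been done in the two source theorems. The only point worth highlighting is that a single assumption---the existence of a majority term---simultaneously supplies both the congruence distributivity of $\VOp(\LA)$ and the binary Baker--Pixley property on $\LA$, while the hypothesis on subalgebras of $\LA \times \LA$ is precisely what is needed to strengthen the binary Baker--Pixley property to its unary form, so that nothing beyond the stated hypotheses is required.
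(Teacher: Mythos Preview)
Your proposal is correct and matches the paper's primary argument, which is laid out in the summary paragraphs immediately preceding the theorem: compose the CD Duality Theorem~\ref{thm: cd duality} with the unary Baker--Pixley Representation Theorem~\ref{t:BP-unary}, using the majority term to supply both congruence distributivity (via Theorem~\ref{thm: NU implies CD}) and the binary Baker--Pixley property (via Theorem~\ref{thm: bp and nu}), and then Fact~\ref{f:unary-in-terms-of-binary} to pass to the unary Baker--Pixley property. The paper also notes an alternative route through the general NU Duality Theorem~\ref{thm: nu duality} combined with Theorem~\ref{thm: extension property unary case} and Facts~\ref{f:unary-in-terms-of-binary} and~\ref{fact: binary equals unary}, but your decomposition is the one the paper presents as primary.
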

 
 \begin{proof}
 	A proof precedes the statement. Alternatively, \cref{thm: nu duality,thm: extension property unary case} and~Facts~\ref{f:unary-in-terms-of-binary} and~\ref{fact: binary equals unary} provide a proof that leverages the results on the binary case.
 \end{proof}

\begin{example}
  Consider the case of the standard MV-chain $\LA \assign \MVchain$. The algebra $\MVchain$ has the binary Baker--Pixley property because it has a lattice reduct and therefore a majority term. Moreover, each subalgebra of $\MVchain^{2}$ is either a subdiagonal or it has the form $\A_{1} \times \A_{2}$ for some $\alg{A}_{1}, \A_{2} \leq \MVchain$, and hence $\MVchain$ has the unary Baker--Pixley property.
   Therefore, compact separated $\LA$-constrained spaces for $\LA \assign \MVchain$ are precisely the Stone separated unary $\LA$-constrained spaces.

  Recalling Examples~\ref{example: finitely valued mv} and~\ref{example: locally finite mv} and Fact~\ref{fact: scott topology}, our duality therefore yields precisely the duality of Cignoli \& Marra~\cite{CignoliMarra2012} for finitely valued MV-algebras in case $\LA \assign \MVchain$ and the duality of Cignoli, Dubuc \& Mundici~\cite{CignoliDubucMundici2004} in case $\LA \assign [0, 1]_{\Q}$ (the rational MV-chain).
\end{example}

\subsection{\texorpdfstring{Duality for $\LA$-constrained spaces: the case of finite spectrum}{Duality for L-constrained spaces: the case of finite spectrum}}

  In this last subsection, we show that for finite $\LA$-constrained spaces the global extension property is equivalent to what we call the local extension property, which, unlike the global extension property, is a first-order condition. For example, in the case of $\LA \assign \Dtwo$ the Priestley separation axiom reduces in the finite case to the transitivity of the reflexive and antisymmetric relation $\leqq$.
  
  In what follows we assume that $k \geq 2$.

\begin{definition}
  A $k$-ary $\LA$-constrained space $\X$ has the \emph{$n$-ary local extension property} if for each $I \subseteq_{n} X$, each $j \in X$, and each continuous compatible $g\colon I \to \LA$ there is some continuous compatible $f\colon I \cup \{ j \} \to \LA$ such that $\restrict{f}{I} = g$.
\end{definition}

\begin{remark} \label{rem: local extension}
  The $n$-ary local extension property implies the $m$-ary local extension property for each $m \leq n$. The global and local extension properties for a $k$-ary $\LA$-constrained space $\X$ are related as follows. The global extension property implies the $k$-ary local extension property. Conversely, if $\X$ has finite cardinality $n+1$, then the $n$-ary local extension property for $\X$ implies the global extension property.
\end{remark}

\begin{remark}
  In contrast to the global extension property, the $n$-ary local extension property is a first-order condition: for all points $x_{1}, \dots, x_{n}, y \in \X$ and all values $a_{1}, \dots, a_{n} \in \LA$, if the function $(x_{1} \mapsto a_{1}, \dots, x_{n} \mapsto a_{n})$ is compatible, then there is some $b \in \LA$ such that the function $(x_{1} \mapsto a_{1}, \dots, x_{n} \mapsto a_{n}, y \mapsto b)$ is compatible. The compatibility of $(x_{1} \mapsto a_{1}, \dots, x_{n} \mapsto a_{n})$ further translates into a first-order condition if we view $\X$ as a relational structure with a $k$-ary predicate indexed by tuples $\tuple{a} \in \LA^{k}$ (and a nullary predicate to cover the case of $\alg{A}_{\emptyset}$ in case $X = \emptyset$) and allow for existential quantification over these indices.
\end{remark}

\begin{example} \label{example: priestley local extension}
  In case $\LA \assign \Dtwo$, the binary local extension property states that for all $x, y, z \in \X$
\begin{align*}
  \pair{a}{b} \in \alg{A}_{x, y} \implies \pair{a}{c} \in \alg{A}_{x, z} \text{ and } \pair{c}{b} \in \alg{A}_{z, y} \text{ for some } c \in \Dtwo.
\end{align*}
Since $\pair{0}{0}, \pair{1}{1} \in \alg{A}_{u, v}$ for all $u, v \in X$, and moreover $\pair{a}{b} \in \alg{A}_{x, y}$ if and only if $\pair{b}{a} \in \alg{A}_{y, x}$, this implication is equivalent to the claim that if $\pair{1}{0} \in \alg{A}_{x, y}$, then for each $z \in X$ either $\pair{1}{0} \in \alg{A}_{x, z}$ or $\pair{1}{0} \in \alg{A}_{z, y}$. That is, if $x \nleq y$, then $x \nleq z$ or $z \nleq y$, which is simply the transitivity of $\leq$ stated contrapositively.
\end{example}

\begin{definition}
	Let $\maj$ be a $(k+1)$-ary near unanimity term on $\LA$ for some $l \geq 2$.
	A subset $M$ of $\LA$ is \emph{convex with respect to $\maj$} if for all $a_{1}, \dots, a_{k+1} \in \LA$
	\begin{align*}
		\text{$a_{i} \in M$ for all but at most one $i \in \{ 1, \dots, k+1 \}$ $\implies$ $\maj(a_{1}, \dots, a_{l+1}) \in M$.}
	\end{align*}
\end{definition}

  Notice that the definition of a near unanimity term states precisely that each singleton subset of $\LA$ is convex.

\begin{remark}
	The terminology stems from the fact that a sublattice $M$ of an algebra $\LA$ with a lattice reduct is order-convex if and only if it is convex with respect to the majority term
	\[
	\maj(x_{1}, x_{2}, x_{3}) \assign (x_{1} \wedge x_{2}) \vee (x_{2} \wedge x_{3}) \vee (x_{3} \wedge x_{1}).
	\]
	Indeed, suppose that $M$ is order-convex, and let $x_1, x_2 \in M$.
	Then
	\[
	\maj(x_{1}, x_{2}, x_{3}) = (x_{1} \wedge x_{2}) \vee (x_{2} \wedge x_{3}) \vee (x_{3} \wedge x_{1}) \leq x_1 \vee x_2 \vee x_1 = x_1 \vee x_2 \in M,
	\]
	and
	\[
	\maj(x_{1}, x_{2}, x_{3}) = (x_{1} \wedge x_{2}) \vee (x_{2} \wedge x_{3}) \vee (x_{3} \wedge x_{1}) \geq x_1 \wedge x_2 \in M,
	\]
	and thus $\maj(x_{1}, x_{2}, x_{3}) \in M$.
	Thus, $M$ is convex with respect to $\maj$.
	Conversely, suppose that $M$ is convex with respect to $\maj$.
	Let $x_1,x_3 \in M$ and $x_1 \leq x_2 \leq x_3$.
	Then
	\[
	x_2 = x_1 \vee x_2 \vee x_1 = (x_{1} \wedge x_{2}) \vee (x_{2} \wedge x_{3}) \vee (x_{3} \wedge x_{1}) = \maj(x_{1}, x_{2}, x_{3}) \in M,
	\]
	and so $M$ is order-convex.
\end{remark}

\begin{lemma} \label{lemma: convex intersection property}
  Let $M_{1}, \dots, M_{n}$ be convex subsets of $\LA$ with respect to some $(k+1)$-ary near unanimity term $\maj$. If $\bigcap_{i \in I} M_{i}$ is non-empty for all $I \subseteq_{k} \{ 1, \dots, n \}$, then $M_{1} \cap \dots \cap M_{n}$ is non-empty.
\end{lemma}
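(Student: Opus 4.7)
The plan is to proceed by induction on $n$, using $k$-ary convexity (via $\maj$) as a Helly-type condition. The base case is $n \leq k$: since $\{1, \dots, n\}$ itself has cardinality at most $k$, the hypothesis directly yields $M_1 \cap \dots \cap M_n \neq \emptyset$.

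For the inductive step, assume $n \geq k+1$ and that the statement holds for $n-1$. First I would apply the inductive hypothesis $n$ times: for each $j \in \{1, \dots, n\}$, the family $(M_i)_{i \neq j}$ consists of $n-1$ convex sets, and any subfamily indexed by $I \subseteq_k \{1, \dots, n\} \setminus \{j\}$ has nonempty intersection by the original hypothesis. Hence $\bigcap_{i \neq j} M_i$ is nonempty, so I can choose an element $b_j \in \bigcap_{i \neq j} M_i$.

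Next I would combine the points $b_1, \dots, b_{k+1}$ using $\maj$ (note that we have at least $k+1$ such points because $n \geq k+1$). Set $c \assign \maj^{\LA}(b_1, \dots, b_{k+1})$. The claim is that $c \in M_i$ for every $i \in \{1, \dots, n\}$, which is verified in two cases. If $i \in \{1, \dots, k+1\}$, then $b_j \in M_i$ for every $j \in \{1, \dots, k+1\}$ with $j \neq i$, so among the $k+1$ arguments at most one (namely $b_i$) lies outside $M_i$, and convexity of $M_i$ gives $c \in M_i$. If instead $i \in \{k+2, \dots, n\}$, then $b_j \in M_i$ for all $j \in \{1, \dots, k+1\}$ (none of the indices equals $i$), so again convexity yields $c \in M_i$. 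This produces an element of $M_1 \cap \dots \cap M_n$ and completes the induction.

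The argument is essentially a straightforward transcription of the classical Helly-type argument for near unanimity terms, so I do not anticipate a serious obstacle; the only mildly delicate point is verifying that convexity of $M$ really does handle the case where all $k+1$ arguments of $\maj$ lie in $M$ (since the definition is phrased as ``all but at most one''), but this is immediate as ``all'' is a special case of ``all but at most one.''
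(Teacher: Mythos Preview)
Your proof is correct and essentially identical to the paper's: both proceed by induction on $n$, pick for each index $j$ an element in $\bigcap_{i \neq j} M_i$ via the inductive hypothesis, and then apply $\maj$ to the first $k+1$ of these elements, checking membership in each $M_i$ by convexity. The only difference is cosmetic (you go from $n-1$ to $n$, the paper from $n$ to $n+1$).
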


\begin{proof}
  We prove the claim by induction on $n$. For $n \leq k$ the claim holds trivially. Now suppose that the claim holds for some $n \geq k$ and consider convex subsets $M_{1}, \dots, M_{n+1}$ of $\A$. By the inductive hypothesis for each $i \in \{ 1, \dots, n+1 \}$ there is some $a_{i} \in \A$ such that $a_{i} \in M_{j}$ for each $j \neq i$. Take $a \assign \maj(a_{1}, \dots, a_{k+1})$. By convexity, $a \in M_{j}$ for all $j \in \{ 1, \dots, n+1 \}$.
\end{proof}

  Below we use the notation $(f, y \mapsto a) \assign f \cup \pair{y}{a}$ for the extension of a function $f$ with $y \notin \dom f$ by taking $y \mapsto a$.

\begin{lemma} \label{lemma: possible extensions are convex}
  Let $\maj$ be a $(k+1)$-ary near unanimity term on~$\LA$ and let $\X$ be an $\LA$-constrained space. Consider $I \subseteq_{k-1} X$ and $y \in X - I$. Then, for each continuous compatible function $f\colon I \to \LA$, the set
\begin{align*}
  M_{f, y} \assign \set{a \in \A_{y}}{(f, y \mapsto a) \in \A_{I \cup \{ y \}}}
\end{align*}
  is a convex subset of $\A_{y}$ with respect to $\maj$.
\end{lemma}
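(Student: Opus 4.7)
The plan is to combine the near unanimity property of $\maj$ with the subdirectness of the constraint family, so that the trace of a near unanimity combination on $I$ collapses to $f$ while at $y$ it produces the desired value. Fix $a_{1}, \dots, a_{k+1} \in \A_{y}$ with all but at most one, say $a_{k+1}$, lying in $M_{f,y}$; I aim to show $\maj(a_{1}, \dots, a_{k+1}) \in M_{f,y}$. Since $a_{1}, \dots, a_{k+1} \in \A_{y}$ and $\A_{y} \leq \LA$ is closed under $\maj$, the element $\maj(a_{1}, \dots, a_{k+1})$ lies in $\A_{y}$ for free; what remains is to produce a function in $\A_{I \cup \{ y \}}$ whose restriction to $I$ is $f$ and whose value at $y$ is $\maj(a_{1}, \dots, a_{k+1})$.

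To this end, for $i \in \{ 1, \dots, k \}$ the hypothesis $a_{i} \in M_{f,y}$ supplies $g_{i} \assign (f, y \mapsto a_{i}) \in \A_{I \cup \{ y \}}$. Subdirectness of the family of constraints gives $\pi_{y}[\A_{I \cup \{ y \}}] = \A_{y}$, so there exists $g_{k+1} \in \A_{I \cup \{ y \}}$ with $g_{k+1}(y) = a_{k+1}$ (with no requirement on $\restrict{g_{k+1}}{I}$). Now compute $h \assign \maj(g_{1}, \dots, g_{k+1})$ componentwise; since $\A_{I \cup \{ y \}}$ is a subalgebra of $\LA^{I \cup \{ y \}}$, we have $h \in \A_{I \cup \{ y \}}$.

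It remains to evaluate $h$. For each $x \in I$, we have $g_{1}(x) = \dots = g_{k}(x) = f(x)$, so by the near unanimity property of $\maj$,
\[
  h(x) = \maj(f(x), \dots, f(x), g_{k+1}(x)) = f(x);
\]
at $y$, $h(y) = \maj(a_{1}, \dots, a_{k+1})$ by construction. Hence $h = (f, y \mapsto \maj(a_{1}, \dots, a_{k+1}))$ lies in $\A_{I \cup \{ y \}}$, which means $\maj(a_{1}, \dots, a_{k+1}) \in M_{f,y}$, establishing convexity.

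The principal delicate point is the use of subdirectness to obtain $g_{k+1}$: we cannot take $g_{k+1}$ of the form $(f, y \mapsto a_{k+1})$, since $a_{k+1}$ need not lie in $M_{f,y}$, but we only need its value at $y$, and subdirectness supplies this without requiring any control over $\restrict{g_{k+1}}{I}$ (which the near unanimity property then harmlessly absorbs on $I$). This is also the step that clarifies why convexity is asserted within $\A_{y}$: if the exceptional $a_{k+1}$ were allowed to be an arbitrary element of $\LA$, subdirectness would not yield a suitable $g_{k+1} \in \A_{I \cup \{ y \}}$.
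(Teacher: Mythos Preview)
Your proof is correct and follows essentially the same approach as the paper's: both use subdirectness to lift the exceptional value $a_{k+1}\in\A_{y}$ to some element of $\A_{I\cup\{y\}}$ with arbitrary behaviour on $I$, apply $\maj$ to this together with the $k$ functions $(f, y\mapsto a_{i})$, and then invoke near unanimity pointwise on $I$ to recover $f$. Your additional remarks on why subdirectness is the crux and why convexity is stated relative to $\A_{y}$ rather than $\LA$ are accurate and add clarity.
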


\begin{proof}
  Let $J \assign I \cup \{ y \}$. Take $a_{1}, \dots, a_{k} \in M_{f, y} \subseteq \A_{y}$ and $a_{k+1} \in \A_{y}$. That is, $(f, y \mapsto a_{i}) \in \A_{J}$ for all $i \in \{ 1, \dots, k \}$. Since $\A_{y} = \pi_{J \to y}[\A_{J}]$, there is some continuous compatible $g\colon I \to \LA$ such that $(g, y \mapsto a_{k+1}) \in \A_{J}$. Applying the near unanimity term $\maj$ to $(f, y \mapsto a_{1}), \dots, (f, y \mapsto a_{n}), (g, y \mapsto a_{k+1})$ yields a function $(f, y \mapsto a) \in \alg{A}_{J}$ for $a \assign \maj(a_{1}, \dots, a_{k+1})$. Thus $a \in M_{f, y}$.
\end{proof}

\begin{theorem}[From local extension to global extension] \label{t:local-to-global}
  Suppose that $\LA$ has a near unanimity term of arity $k+1$. Then each finite $k$-ary $\LA$-constrained space with the $k(k-1)$-ary local extension property has the global extension property.
\end{theorem}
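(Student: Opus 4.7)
The plan is to reduce the global extension property to an iterated one-point extension, each step handled via the convex intersection machinery of Lemmas~\ref{lemma: convex intersection property} and~\ref{lemma: possible extensions are convex}.

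First, I would establish an automatic continuity phenomenon: on a finite $k$-ary $\LA$-constrained space with $k \geq 2$, any compatible local function $f\colon J \to \LA$ on a subset $J \subseteq X$ is automatically continuous with respect to the subspace topology on $J$. For each pair $x, y \in J$ the restriction $\restrict{f}{\{x,y\}}$ lies in $\A_{\{x,y\}} \leq \Cont(\{x,y\}, \LA)$, so $f$ is pairwise continuous. On a finite space pairwise continuity forces continuity: if $y$ lies in the minimal open neighborhood of $x$ in $J$, the singleton $\{x\}$ fails to be open in the subspace $\{x, y\}$, so continuity of $\restrict{f}{\{x,y\}}$ forces $f(y) = f(x)$, making $f$ constant on each minimal open. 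This reduces the theorem to the purely combinatorial task of producing a compatible extension.

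Second, I would carry out the one-point extension. Given a continuous compatible $g\colon I \to \LA$ and some $j \in X \setminus I$, compatibility of the extended function $f \assign g \cup \{(j, a)\}$ on $I \cup \{j\}$ is equivalent to $a \in \bigcap_{J \subseteq_{k-1} I} M_{\restrict{g}{J},\, j}$. By \cref{lemma: possible extensions are convex} each such $M_{\restrict{g}{J},\, j}$ is convex with respect to the $(k+1)$-ary near unanimity term on $\LA$, so by \cref{lemma: convex intersection property} it suffices to show every $k$-fold sub-intersection is non-empty. Given $J_1, \dots, J_k \subseteq_{k-1} I$, the union $K \assign J_1 \cup \dots \cup J_k$ has cardinality at most $k(k-1)$, and $\restrict{g}{K}$ is continuous compatible on $K$ by Step~1. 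The $k(k-1)$-ary local extension property therefore supplies a continuous compatible $h\colon K \cup \{j\} \to \LA$ extending $\restrict{g}{K}$, and setting $a \assign h(j)$ produces an element lying in every $M_{\restrict{g}{J_i},\, j}$, since $\restrict{h}{J_i \cup \{j\}} = (\restrict{g}{J_i},\, j \mapsto a) \in \A_{J_i \cup \{j\}}$.

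Finally, beginning with any $g \in \A_I$ for $I \subseteq_k X$ (which is continuous compatible by the continuity built into the constraints together with subdirectness), I iterate this one-point extension over the finitely many points of $X \setminus I$, producing a compatible function $f\colon X \to \LA$ extending $g$. Step~1 guarantees $f$ is continuous, so $f \in \CComp \X$ and the global extension property follows. The crux of the argument is the coordination of the local compatibility conditions across the various $J \subseteq_{k-1} I$: the bound $|K| \leq k(k-1)$, obtained by unioning $k$ sets of size at most $k - 1$, is precisely what makes the hypothesised local extension strong enough to produce a common witness in every $k$-fold sub-intersection.
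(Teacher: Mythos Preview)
Your proof is correct and follows essentially the same route as the paper: both reduce the global extension to iterated one-point extensions via Lemmas~\ref{lemma: convex intersection property} and~\ref{lemma: possible extensions are convex}, with the $k(k-1)$ bound arising exactly as you say from unioning $k$ sets of size at most $k-1$ (the paper phrases the iteration as an induction establishing the $n$-ary local extension property for all $n$, but the content is identical). Your Step~1 on automatic continuity for compatible local functions on a finite space is a welcome addition that the paper's proof leaves implicit.
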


\begin{proof}
  We prove by induction over $n$ that if a $k$-ary $\LA$-space $\X$ has the $k(k-1)$-ary local extension property, then it has the $n$-ary local extension property for each $n \geq 0$. If $\X$ is finite, the global extension property then follows by Remark~\ref{rem: local extension}.

  The case of $n \leq k(k-1)$ is covered by the $k(k-1)$-ary local extension property. Now suppose that $\X$ has the $n$-ary local extension property for some $n \geq k(k-1)$, and let us prove that it has the ($n+1$)-ary local extension property. Consider $J \subseteq_{n+1} X$, a compatible function $g\colon J \to \LA$, and some $y \in X - J$. For each $I \subseteq_{k-1} J$ the set $M_{I} \assign \set{a \in \A_{y}}{(\restrict{g}{I}, y \mapsto a) \in \A_{I \cup \{ y \}}}$ is a convex subset of $\alg{A}_{y}$ by Lemma~\ref{lemma: possible extensions are convex}. Given $k$ sets $I_{1}, \dots, I_{l} \subseteq_{k-1} J$ the intersection $M_{I_{1}} \cap \dots \cap M_{I_{l}}$ is non-empty by the $k(k-1)$-ary local extension property. By Lemma~\ref{lemma: convex intersection property} it follows that $M \assign \bigcap \set{M_{I}}{I \subseteq_{k-1} J}$ is non-empty. But for each $a \in M$ the function $(g, y \mapsto a)$ is compatible.
\end{proof}

\begin{theorem}[NU Duality Theorem: the case of finite spectrum]\label{thm: nu duality finite case}
  Let $\LA$ be a nontrivial algebra with a near unanimity term of arity $k+1$, with only trivial partial endomorphisms and that lacks the empty subalgebra. Then the category of $\LA$-algebras with a finite spectrum is equivalent to the category of finite separated $k$-ary $\LA$-constrained spaces with the $k (k - 1)$-ary local extension property.
\end{theorem}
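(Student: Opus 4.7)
The plan is to deduce this result by restricting the NU Duality Theorem (Theorem~\ref{thm: nu duality}) to appropriate full subcategories on each side. First, I would check that Theorem~\ref{thm: nu duality} applies under the current hypotheses: a $(k+1)$-ary near unanimity term on $\LA$ makes $\VOp(\LA)$ congruence distributive (Theorem~\ref{thm: NU implies CD}), and every finitely valued $\LA$-algebra lies in $\VOp(\LA)$, so the congruence distributivity hypothesis of Theorem~\ref{thm: nu duality} is satisfied. Combined with the assumptions that $\LA$ is nontrivial, has only trivial partial endomorphisms, and lacks the empty subalgebra, this yields a dual equivalence between the category of finitely valued $\LA$-algebras and the category of $k$-ary $\LA$-Priestley spaces.

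Next I would identify the subcategory on the algebraic side. By Fact~\ref{fact: fv = ffv}, canonically finitely valued and finitely valued $\LA$-algebras coincide under the current hypotheses, and by Theorem~\ref{thm: fully finitely valued} an $\LA$-algebra has a finite spectrum if and only if it is canonically finitely valued with finitely many homomorphisms into $\LA$. In particular, $\LA$-algebras with a finite spectrum form a full subcategory of finitely valued $\LA$-algebras. Since the duality of Theorem~\ref{thm: nu duality} sends an $\LA$-algebra $\A$ to an $\LA$-Priestley space whose underlying set is $\Alg(\A, \LA) = \Spec \A$, the restriction on the spatial side is precisely the category of \emph{finite} $k$-ary $\LA$-Priestley spaces.

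It remains to identify finite $k$-ary $\LA$-Priestley spaces with finite separated $k$-ary $\LA$-constrained spaces having the $k(k-1)$-ary local extension property. A finite separated $k$-ary $\LA$-constrained space is automatically Hausdorff, hence discrete, hence compact, so the topological part of the definition of an $\LA$-Priestley space is vacuous in the finite setting. What remains is to match the extension properties. By Remark~\ref{rem: local extension}, the global extension property always implies the $k(k-1)$-ary local extension property, and conversely Theorem~\ref{t:local-to-global} establishes that a finite $k$-ary $\LA$-constrained space with the $k(k-1)$-ary local extension property has the global extension property. This characterization is the crucial non-trivial input and has already been proved; once it is in hand, no further obstacle arises, since morphisms on both sides are continuous $\LA$-constrained maps (continuity being automatic between finite discrete spaces) and these are unaffected by the restriction to full subcategories. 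Composing the restricted Theorem~\ref{thm: nu duality} with this identification yields the claimed dual equivalence.
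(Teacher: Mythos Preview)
Your approach is exactly the paper's: restrict Theorem~\ref{thm: nu duality} to finite spectra and then replace global extension by the $k(k-1)$-ary local extension property via Theorem~\ref{t:local-to-global}. The paper's proof is the one-liner ``This follows from the above theorem and Theorem~\ref{thm: nu duality}'', and you have essentially unpacked that.

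There is one citation that does not carry the weight you put on it. You write that, by Remark~\ref{rem: local extension}, the global extension property implies the $k(k-1)$-ary local extension property. That remark only asserts that global extension implies the \emph{$k$-ary} local extension property. For $k=2$ these coincide, but for $k\geq 3$ the $k(k-1)$-ary property concerns compatible functions on sets $I$ of cardinality possibly greater than $k$, and the global extension property says nothing directly about such $g\colon I\to\LA$. The missing step is a short Baker--Pixley argument: if $\X$ is a finite separated $k$-ary $\LA$-Priestley space and $g\colon I\to\LA$ is compatible, then $g$ is $k$-interpolated by the finite separated representation $\pi_I[\CComp\X]\leq\LA^I$ (since $\A_J=\pi_J[\CComp\X]$ by global extension); the $k$-ary Baker--Pixley property, available from the $(k+1)$-ary near unanimity term via Theorem~\ref{thm: bp and nu}, then gives $g\in\pi_I[\CComp\X]$, so $g=\restrict{f}{I}$ for some $f\in\CComp\X$, and $\restrict{f}{I\cup\{j\}}$ is the required extension. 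With this patch in place your argument is complete.
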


\begin{proof}
  This follows from the above theorem and Theorem~\ref{thm: nu duality}.
\end{proof}

The special case of $k \assign 2$, which in particular applies if $\LA$ has a nontrivial bounded lattice reduct and no trivial partial endomorphisms, is the following.

\begin{corollary}
	Let $\LA$ be a nontrivial algebra with a majority term which only has trivial partial endomorphisms and lacks the empty subalgebra. Then the category of $\LA$-algebras with a finite spectrum is equivalent to the category of finite separated binary $\LA$-constrained spaces with the binary local extension property.
\end{corollary}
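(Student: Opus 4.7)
The plan is to derive this as a direct specialization of Theorem~\ref{thm: nu duality finite case} to the case $k \assign 2$. The first step is to observe that a majority term on $\LA$ is by definition a ternary near unanimity term, i.e.\ a near unanimity term of arity $k+1 = 3$. Hence the hypotheses of Theorem~\ref{thm: nu duality finite case} (nontriviality, only trivial partial endomorphisms, no empty subalgebra, and a $(k+1)$-ary near unanimity term) are all met with $k \assign 2$.

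Next, I would unpack the conclusion of Theorem~\ref{thm: nu duality finite case} for this value of $k$. On the spatial side, the theorem gives an equivalence with the category of finite separated $k$-ary $\LA$-constrained spaces with the $k(k-1)$-ary local extension property. Substituting $k = 2$, these are finite separated binary $\LA$-constrained spaces with the $2$-ary, i.e.\ binary, local extension property, which is exactly the spatial side appearing in the corollary. On the algebraic side, the class is unchanged: $\LA$-algebras with a finite spectrum.

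The only substantive point to check is therefore the parenthetical remark that having a nontrivial bounded lattice reduct (together with no trivial partial endomorphisms) suffices to fulfill the hypotheses. A bounded lattice reduct supplies the majority term $\maj(x_1,x_2,x_3) \assign (x_1 \wedge x_2) \vee (x_2 \wedge x_3) \vee (x_3 \wedge x_1)$, as already noted in the text following Definition~\ref{def: NU term}. Nontriviality of the lattice reduct excludes the empty subalgebra (since $0$ and $1$ are constants in the signature, every subalgebra contains them and is therefore non-empty). The assumption of only trivial partial endomorphisms is imposed directly. No further obstacle arises; the proof is essentially a specialization together with the bookkeeping observation that ``majority term'' literally means ``ternary near unanimity term''. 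The only mild subtlety, and the point I would state carefully, is that $k(k-1) = 2$ when $k = 2$, so the local extension property asked for in the general theorem collapses to the binary one, matching the statement of the corollary verbatim.
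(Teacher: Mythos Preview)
Your proposal is correct and follows exactly the approach taken in the paper: the corollary is presented there as the immediate specialization of Theorem~\ref{thm: nu duality finite case} to $k \assign 2$, using that a majority term is a ternary near unanimity term and that $k(k-1) = 2$. Your unpacking of the parenthetical about the bounded lattice reduct is also accurate and matches the paper's introductory remark before the corollary.
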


\begin{example}[Priestley duality for positive MV-algebras with finite spectrum] \label{example: pmv local extension}
  In Example~\ref{ex: duality for pmv}, we discussed at length how $\LA$-Priestley spaces can be understood in concrete terms in the case of $\LA \assign \PMVchain$ as MV-Priestley spaces: compact spaces equipped with a family $(\A_{x})_{x \in X}$ of subalgebras of $\PMVchain$, a partial order $\leq$, and a family $(\A_{x,y})_{x \leq y}$ of subalgebras of $\PMVchain^{2}$ satisfying certain conditions, including a global extension property. To obtain the restriction of this duality to positive MV-algebras with finite spectrum, it suffices to replace the global extension by the local extension property in the following form: for all $x, y, z \in \X$
\begin{align*}
  \pair{a}{b} \in \B_{x, y} \implies \pair{a}{c} \in \B_{x, z} \text{ and } \pair{c}{b} \in \B_{z, y} \text{ for some } c \in \PMVchain,
\end{align*}
  where
\begin{align*}
  \B_{x, y} \assign \begin{cases} & \A_{x, y} \text{ in case } x \leq y, \\ &  \set{\pair{b}{a} \in \LA^{2}}{\pair{a}{b} \in \A_{y, x}} \text{ in case } y < x, \\ & \A_{x} \times \A_{y} \text{ in case } x \parallel y \text{ (that is, $x \nleq y \nleq x$).}\end{cases}
\end{align*}
  More explicitly, it suffices to consider the following cases:
\begin{enumerateroman}
\item in case $x < z < y$,
\begin{align*}
  \pair{a}{b} \in \A_{x, y} \implies \pair{a}{c} \in \A_{x, z} \text{ and } \pair{c}{b} \in \A_{z, y} \text{ for some } c \in \PMVchain,
\end{align*}
\item in case $x < y < z$,
\begin{align*}
  \pair{a}{b} \in \A_{x, y} \implies \pair{a}{c} \in \A_{x, z} \text{ and } \pair{b}{c} \in \A_{y, z} \text{ for some } c \in \PMVchain,
\end{align*}
\item in case $z < y < x$,
\begin{align*}
  \pair{b}{a} \in \A_{y, x} \implies \pair{c}{a} \in \A_{z, x} \text{ and } \pair{c}{b} \in \A_{z, y} \text{ for some } c \in \PMVchain,
\end{align*}
\item in case $x, y < z$ and $x \parallel y$,
\begin{align*}
  a \in \A_{x} \text{ and } b \in \A_{y} \implies \pair{a}{c} \in \A_{x, z} \text{ and } \pair{b}{c} \in \A_{y, z} \text{ for some } c \in \PMVchain,
\end{align*}
\item in case $z < x, y$ and $x \parallel y$,
\begin{align*}
  a \in \A_{x} \text{ and } b \in \A_{y} \implies \pair{c}{a} \in \A_{z, x} \text{ and } \pair{c}{b} \in \A_{z, y} \text{ for some } c \in \PMVchain.
\end{align*}
\end{enumerateroman}
\end{example}

\section*{Acknowledgments}
The first author's research was funded by UK Research and Innovation (UKRI) under the UK government’s Horizon Europe funding guarantee (grant number EP/Y015029/1, Project ``DCPOS''). The ``Horizon Europe guarantee'' scheme provides funding to researchers and innovators who were unable to receive their Horizon Europe funding (in this case, a Marie Skłodowska-Curie Actions (MSCA) grant) while the UK was in the process of associating. The second author's work was funded by the grant 2021 BP 00212 of the grant agency AGAUR of the Generalitat de Catalunya.

\newcommand{\etalchar}[1]{$^{#1}$}

\end{document}